\newtheorem{theorem}{Theorem}[section]
\newtheorem{thm}[theorem]{Theorem}
\newtheorem{prop}[theorem]{Proposition}
\newtheorem{lem}[theorem]{Lemma}
\newtheorem{cor}[theorem]{Corollary}
\makeatletter \@addtoreset{equation}{section}
\newcommand{\qbinom}[2]{\genfrac{[}{]}{0pt}{}{#1}{#2}}
\newcommand{\Mid}{\:|\:}  
\DeclareMathOperator*{\CT}{CT}
\newcommand{\CC}{\mathbb{C}}
\begin{document}

\title[$q$-AFLT identity]{The AFLT $q$-Morris constant term identity}

\author{Yue Zhou}

\address{School of Mathematics and Statistics, HNP-LAMA, Central South University,
Changsha 410075, P.R. China}

\email{zhouyue@csu.edu.cn}

\subjclass[2010]{05A30, 33D70, 05E05}

\date{June 30, 2022}

\begin{abstract}
It is well-known that the Selberg integral is equivalent to the Morris constant term identity.
More generally, Selberg type integrals can be turned into constant term identities for Laurent polynomials.
In this paper, by extending the Gessel--Xin method of the Laurent series proof of constant term identities, we obtain an AFLT type $q$-Morris constant term identity.
That is a $q$-Morris type constant term identity for a product of two Macdonald polynomials.

\noindent
\textbf{Keywords:} AFLT Selberg integral,
Macdonald polynomials, constant term identities, $q$-Morris identity, symmetric functions.
\end{abstract}

\maketitle

\section{Introduction}\label{s-intr}

Define the $n$-dimensional Jackson-integral or $q$-integral over $[0,1]^n$:
\begin{equation*}
\int_{[0,1]^n}f(x_1,\dots,x_n)\mathrm{d}_qx_1\cdots \mathrm{d}_qx_n
:=(1-q)^n\sum_{v_1,\dots,v_n\geq 0}f(q^{v_1},\dots,q^{v_n})q^{v_1+\cdots+v_n},
\end{equation*}
where $0<q<1$ and $f:\mathbb{R}^n\rightarrow \mathbb{C}$ is a function such that the sum on the right is absolutely convergent.
In 1980, Askey \cite{Askey} conjectured the next $q$-analogue of the Selberg integral \cite{Selberg,FW}. For $\alpha,\beta\in \mathbb{C}\setminus \{0,-1,-2,\dots,\}$ such that $\mathrm{Re}(\alpha)>0$
and $\gamma$ a positive integer,
\begin{multline}\label{e-qSelberg}
\int_{[0,1]^n} \prod_{i=1}^n x_i^{\alpha-1}(qx_i)_{\beta-1}
\prod_{1\leq i<j\leq n}x_i^{2\gamma}(q^{1-\gamma}x_j/x_i)_{2\gamma}
\mathrm{d}_qx_1\cdots \mathrm{d}_qx_n\\
=q^{\gamma\alpha \binom{n}{2}+2\gamma^2\binom{n}{3}}
\prod_{i=0}^{n-1}\frac{\Gamma_q(\alpha+i\gamma)\Gamma_q(\beta+i\gamma)\Gamma_q\big(1+(i+1)\gamma\big)}
{\Gamma_q\big(\alpha+\beta+(n-1+i)\gamma\big)\Gamma_q(1+\gamma)},
\end{multline}
where for $z\in \mathbb{C}$
\[
(a)_{z}=(a;q)_{z}:=\frac{(a)_{\infty}}{(aq^z)_{\infty}}\ \
\]
is the $q$-shifted factorial
and $\Gamma_q(z)=(q)_{z-1}/(1-q)^{z-1}$ is the $q$-gamma function.
Here $(a)_{\infty}=(a;q)_{\infty}:=(1-a)(1-aq)\cdots$.
Askey's conjecture was proved independently by Habsieger \cite{Habsieger1988} and Kadell \cite{Kadell1988}.
Hence, we refer the integral \eqref{e-qSelberg}
as the Askey--Habsieger--Kadell $q$-Selberg integral.
Expressing the integral as a constant term identity, Habsieger and Kadell
obtained the following equivalent result \cite{XZ}.
For nonnegative integers $a,b,c$,
\begin{equation}\label{q-Morris}
\CT_x \prod_{i=1}^{n}\Big(\frac{x_{0}}{x_{i}}\Big)_a
\Big(\frac{qx_{i}}{x_{0}}\Big)_b\prod_{1\leq i<j\leq n}
\Big(\frac{x_{i}}{x_{j}}\Big)_c\Big(\frac{x_{j}}{x_{i}}q\Big)_c
=\prod_{i=0}^{n-1}\frac{(q)_{a+b+ic}(q)_{(i+1)c}}{(q)_{a+ic}(q)_{b+ic}(q)_{c}},
\end{equation}
where $x:=(x_1,\dots,x_n)$ and $\CT\limits_xf(x)$ means to take the constant term of the Laurent polynomial (series) $f(x)$.
The $q=1$ case of \eqref{q-Morris} was proved by Morris \cite{Morris1982} in his Ph.D thesis.
He also showed that the Morris identity is equivalent to the Selberg integral and conjectured
the above $q$-analogue identity. Since \eqref{q-Morris} was conjectured by Morris and proved
by Habsieger and Kadell, this identity is usually referred as the Habsieger--Kadell $q$-Morris identity.
Note that the Laurent polynomial in the left-hand side of \eqref{q-Morris} is homogeneous in $x_0,x_1,\dots,x_n$, we can take $x_0=1$ without affecting the constant term.
Hence, the operator $\CT\limits_x$ in \eqref{q-Morris} is equivalent to $\CT\limits_{x_0,\dots,x_n}$.
The same trick also applies to $A_n(a,b,c,\lambda,\mu)$ below.

Recently, Albion, Rains and Warnaar obtained an elliptic ALFT-Selberg integeral \cite{ARW}.
As a corollary, they got
a $q$-Selberg type integral for a product of two Macdonald polynomials \cite[Corollary 1.5]{ARW}, see \eqref{e-ARW} below.
Let $\lambda$ and $\mu$ be two partitions such that
the length of $\mu$ is $l$.
For $\alpha,\beta,q,t\in \mathbb{C}$ such that $|\beta|,|q|,|t|<1$,
\begin{align}\label{e-ARW}
&\frac{1}{n!(2\pi i)^n}\int_{\mathbb{T}^n}P_{\lambda}(x;q,t)P_{\mu}\Big(\Big[x+\frac{t-\beta}{1-t}\Big];q,t\Big)\\
&\qquad \qquad \times\prod_{i=1}^n\frac{(\alpha/x_i,qx_i/\alpha)_{\infty}}{(\beta/x_i,x_i)_{\infty}}
\prod_{1\leq i<j\leq n}\frac{(x_i/x_j,x_j/x_i)_{\infty}}{(tx_i/x_j,tx_j/x_i)_{\infty}}
\frac{\mathrm{d}x_1}{x_1}\cdots \frac{\mathrm{d}x_n}{x_n}\nonumber \\
& = \beta^{|\lambda|}t^{|\mu|}P_{\lambda}\Big(\Big[\frac{1-t^n}{1-t}\Big];q,t\Big)
P_{\mu}\Big(\Big[\frac{1-\beta t^{n-1}}{1-t}\Big];q,t\Big) \nonumber \\
&\qquad \qquad \times \prod_{i=1}^n\frac{(t,\alpha t^{n-m-i}q^{\lambda_i},\alpha t^{1-i}/\beta,qt^{i-1}\beta/\alpha)_{\infty}}
{(q,t^{i},\beta t^{i-1},\alpha t^{1-i}q^{\lambda_i}/\beta)_{\infty}}
\prod_{i=1}^n\prod_{j=1}^m\frac{(\alpha t^{n-i-j+1}q^{\lambda_i+\mu_j})}{(\alpha t^{n-i-j}q^{\lambda_i+\mu_j})_{\infty}},
\nonumber
\end{align}
where $\mathbb{T}$ is a positively oriented unit circle,
$(a_1,\dots,a_k)_{\infty}=\prod_{i=1}^k(a_i)_{\infty}$,
$m$ is an arbitrary integer such that $m\geq l$,
$P_{\lambda}(x;q,t)$ is the Macdonald polynomial,
and $f[t+z]$ is plethystic notation for the symmetric function $f$.
Using the Cauchy residue theorem and take $t=q^c, \alpha=q^{b+1},\beta=q^{a+b+1},x_i\mapsto x_iq^{b+1}$ for all the $i$, it is not hard to transform the above integral into
a constant term identity, i.e., Theorem~\ref{thm-AFLT} below.

For nonnegative integers $a,b,c$, and partitions $\lambda$ and $\mu$ such that
the length of $\mu$ is $l$, denote
\begin{align}\label{Defi-A}
A_n(a,b,c,\lambda,\mu)&:=\CT_x x_0^{-|\lambda|-|\mu|}P_{\lambda}(x;q,q^c)
P_{\mu}\Big(\Big[\frac{q^{c-b-1}-q^a}{1-q^c}x_0+\sum_{i=1}^nx_i\Big];q,q^c\Big)\\
&\qquad \times
\prod_{i=1}^{n}\Big(\frac{x_{0}}{x_{i}}\Big)_a
\Big(\frac{qx_{i}}{x_{0}}\Big)_b\prod_{1\leq i<j\leq n}
\Big(\frac{x_{i}}{x_{j}}\Big)_c\Big(\frac{x_{j}}{x_{i}}q\Big)_c.\nonumber
\end{align}
Our main result in this paper is a direct constant term proof of the next AFLT type $q$-Morris constant term identity.
\begin{thm}\label{thm-AFLT}
Let $A_n(a,b,c,\lambda,\mu)$ be defined in \eqref{Defi-A}. Then
\begin{multline}\label{AFLT}
A_n(a,b,c,\lambda,\mu)=(-1)^{|\lambda|}q^{\sum_{i=1}^n\binom{\lambda_i}{2}-cn(\lambda)}
P_{\lambda}\Big(\Big[\frac{1-q^{nc}}{1-q^c}\Big];q,q^c\Big)
P_{\mu}\Big(\Big[\frac{q^{c-b-1}-q^{a+nc}}{1-q^c}\Big];q,q^c\Big) \\
\times
\prod_{i=1}^n\prod_{j=1}^l(q^{b+(n-i-j)c+\lambda_i+\mu_{j+1}+1})_{\mu_j-\mu_{j+1}}
\prod_{i=1}^n \frac{(q^{a+(i-1)c-\lambda_i+1})_{b+\lambda_i}(q)_{ic}}{(q)_{b+(n-i)c+\lambda_i+\mu_1}(q)_c},
\end{multline}
where $|\lambda|:=\sum_{i=1}^n\lambda_i$ is the size of $\lambda$,
and $n(\lambda):=\sum_{i=1}^n(i-1)\lambda_i$.
\end{thm}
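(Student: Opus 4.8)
The plan is to evaluate the constant term directly, extending the Gessel--Xin iterated Laurent series technique so that it can carry the two Macdonald polynomials through the computation. First I would exploit the homogeneity of the integrand in $(x_0,x_1,\dots,x_n)$ to set $x_0=1$, which replaces $\CT_x$ by $\CT_{x_1,\dots,x_n}$ and turns the plethystic alphabet of the second factor into $B+\sum_{i=1}^n x_i$ with $B:=\tfrac{q^{c-b-1}-q^a}{1-q^c}$. The first structural move is to separate the constant alphabet $B$ from the variables by the Macdonald coproduct (plethystic addition) identity $P_\mu[B+\sum_i x_i]=\sum_{\nu\subseteq\mu}P_{\mu/\nu}[B]\,P_\nu(x;q,q^c)$, in which $P_{\mu/\nu}[B]$ is a skew specialization with a known closed form. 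After this step the quantity in \eqref{Defi-A} becomes a finite sum, over $\nu\subseteq\mu$, of constant terms of a product of two genuine Macdonald polynomials $P_\lambda(x)\,P_\nu(x)$ against the $q$-Morris kernel.

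The heart of the argument is a recursion that lowers the number of variables $n$. I would single out $x_n$ and compute $\CT_{x_n}$ by expanding everything as an iterated Laurent series in $\CC((x_n))\cdots((x_1))$ and applying the Gessel--Xin partial-fraction and coefficient-extraction machinery to the factors involving $x_n$, namely $(1/x_n)_a$, $(qx_n)_b$ and $\prod_{i<n}(x_i/x_n)_c(qx_n/x_i)_c$. To push the symmetric functions through this step I would use the branching rule $P_\lambda(x_1,\dots,x_n)=\sum_\kappa \psi_{\lambda/\kappa}(q,q^c)\,x_n^{|\lambda|-|\kappa|}P_\kappa(x_1,\dots,x_{n-1})$, and its analogue for $P_\nu$, so that once the coefficient of $x_n^0$ is taken the surviving terms are again constant terms of products of Macdonald polynomials against a $q$-Morris kernel in $n-1$ variables with shifted parameters $a,b$. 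The base case $\lambda=\mu=\emptyset$ is precisely the Habsieger--Kadell $q$-Morris identity \eqref{q-Morris}, and a descent on $(n,\lambda,\mu)$ should assemble the full right-hand side of \eqref{AFLT}.

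To keep the bookkeeping tractable and to avoid expanding every product $P_\lambda P_\nu$ into Littlewood--Richardson structure constants by hand, I would run the recursion at the level of a single free parameter, fixing $b,c,\lambda,\mu$ and regarding both sides of \eqref{AFLT} as rational functions of $q^a$ via $(x)_a=(x)_\infty/(xq^a)_\infty$. One then bounds the degree in $q^a$ on each side, checks the identity at sufficiently many specializations $a=0,1,2,\dots$ (where $(1/x_i)_a$ truncates and the constant term collapses), and invokes interpolation. The required base evaluations come from the principal specialization formula for $P_\lambda$, producing $P_\lambda[\tfrac{1-q^{nc}}{1-q^c}]$, and from the explicit value of the skew specialization $P_{\mu/\nu}[B]$, which recombines into $P_\mu[\tfrac{q^{c-b-1}-q^{a+nc}}{1-q^c}]$.

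The step I expect to be the main obstacle is controlling the two Macdonald polynomials simultaneously inside the coefficient extraction. After the coproduct expansion one must take the constant term of $P_\lambda(x)\,P_\nu(x)$ against the Morris kernel, and the branching coefficients $\psi_{\lambda/\kappa}$, $\psi_{\nu/\sigma}$, together with the weights produced by each $\CT_{x_n}$, must be shown to telescope into the clean product of $q$-shifted factorials in \eqref{AFLT}. Checking that these combinatorial factors collapse correctly---in particular reproducing the coupled double product $\prod_{i=1}^n\prod_{j=1}^l(q^{b+(n-i-j)c+\lambda_i+\mu_{j+1}+1})_{\mu_j-\mu_{j+1}}$ as well as the prefactor $(-1)^{|\lambda|}q^{\sum_i\binom{\lambda_i}{2}-cn(\lambda)}$---is the crux, and it is here that the extension of the Gessel--Xin method beyond the single-polynomial setting has to do its real work.
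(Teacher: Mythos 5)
There is a genuine gap, and it sits exactly where you place your two main tools. First, your interpolation step has no engine: you propose to compare both sides as polynomials in $q^a$ by checking them at $a=0,1,2,\dots$, but you give no way to evaluate $A_n(a,b,c,\lambda,\mu)$ at such points --- the truncation of $(x_0/x_i)_a$ at a fixed nonnegative integer $a$ does not make the constant term computable, so each ``base evaluation'' is as hard as the theorem itself and the argument is circular. The paper resolves this by interpolating at points where the value is known for free, namely the \emph{roots}: $A_n$ is a polynomial in $q^a$ of degree at most $nb+|\lambda|+|\mu|$ (Corollary~\ref{cor-poly-BC}), and it vanishes at exactly that many negative integer values of $a$, the sets $A_1\cup A_2\cup A_3$ of \eqref{Roots-A}, once $c>b+\lambda_1+\mu_1$ (a restriction later removed by the rationality result, Corollary~\ref{cor-rationality}). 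Locating these roots is the bulk of the paper and requires machinery absent from your plan: the Gessel--Xin extraction is run in $x_0$ (not $x_n$) on the rational function $Q(d)$ obtained by setting $a=-d$, and the vanishing at each root is forced by Cai's splitting formula (Proposition~\ref{prop-split}), the combinatorial Lemma~\ref{cor-key}, the plethystic substitution Lemma~\ref{cor-subs}, and new vanishing theorems for (skew) Macdonald polynomials (Propositions~\ref{prop-Mac-vanish} and~\ref{prop-Mac-vanish-2}); see Lemmas~\ref{lem-roots-A1}--\ref{lem-roots-A3-2}.

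Second, the step you yourself flag as the crux --- telescoping the products of branching coefficients $\psi_{\lambda/\kappa}\psi_{\nu/\sigma}$ through repeated $\CT_{x_n}$ extractions --- is not a bookkeeping nuisance but the reason a direct variable-by-variable recursion is not known to work: there is no closed form making two coupled families of branching weights collapse against the Morris kernel, and your proposal offers none. The paper's recursion in $n$ comes from an entirely different mechanism, a \emph{single} extra evaluation at $a=-b-1$ (Proposition~\ref{prop-add-1}): there the denominator $\prod_{i=1}^n(q^{-b-1}x_0/x_i)_{b+1}$ forces $k_1=b+1$ in the extraction, the sum collapses to one term, and after the symmetrization trick of Proposition~\ref{prop-equiv} and the substitution $x_nq^{b+1}\mapsto x_0$ one lands on $A_{n-1}(c-b-1,b+c,c,\overline{\mu},\overline{\lambda})$ --- note the \emph{swap} of the roles of $\lambda$ and $\mu$, which no branching-rule recursion of the kind you describe would produce. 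Moreover, when $\ell(\mu)\geq n$ this point is itself a root, so a different additional point $a=(\ell(\mu)-n+1)c-b-1$ and the shift identity of Proposition~\ref{prop-add-2} are needed; this case distinction is also missing from your plan. In short, your Steps matching the paper (homogeneity, the coproduct expansion via \eqref{e-Mac4}, polynomiality in $q^a$) are fine, but both load-bearing components of a complete proof --- the determination of the roots and a workable additional-point evaluation --- are either absent or replaced by steps that cannot be carried out as stated.
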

Note the right-hand side of \eqref{AFLT} can be written as a compact product of $q$-factorials
by \eqref{e-special} below.
Many specializations of \eqref{AFLT} have rich history.
When $\lambda=\mu=0$, \eqref{AFLT} is the Habsieger--Kadell $q$-Morris identity \eqref{q-Morris} above.
When $t=q^{\gamma}$ and take $q \rightarrow 1$, the Macdonald polynomial $P_{\lambda}(x;q,t)$ reduces to the Jack polynomial $P_{\lambda}^{(1/\gamma)}(x)$.
Kadell \cite{Kadell97} generalized the Morris identity by adding a Jack polynomial, i.e., the $q=1$ and $\mu=0$ case of \eqref{AFLT}.
Then, he raised an open problem to find the $q$-analogue. The question
was solved by Macdonald by developing the theory of Macdonald polynomials \cite{MacSMC,Mac95}.
Macdonald gave an explicit $q$-analogue integral of Kadell's open problem in his book \cite[Page 374]{Mac95}.
By the standard transformation between the $q$-Selberg type integrals and the $q$-Morris type constant term identities \cite{Kadell1988,BF}, it is not hard to obtain a constant term identity equivalent to Macdonald's result. That is the $\mu=0$ case of \eqref{AFLT}.
In 1993, Kadell \cite{Kadell93} gave a two Jack symmetric function generalization of the Morris identity. Kadell's result in 1993 corresponds to the $q=1$ and $c=a+b+1$ case of \eqref{AFLT}. If $c=a+b+1$, the constant term identity \eqref{AFLT} reduces to \cite[Theorem 1.7]{Warnaar05}. It should be noted that the $q=1$ case of \eqref{AFLT} is equivalent to
the next ALFT type Selberg integral \cite{AFLT11}.
\begin{align}\label{e-ALFT}
&\int_{[0,1]^n}P_{\lambda}^{(1/\gamma)}(x)P^{(1/\gamma)}_{\mu}[x+\beta/\gamma-1]
\prod_{i=1}^n x_i^{\alpha-1}(1-x_i)^{\beta-1}\prod_{1\leq i<j\leq n}|x_i-x_j|^{2\gamma}
\mathrm{d}x_1\cdots \mathrm{d}x_n\\
&\quad
=P_{\lambda}^{(1/\gamma)}[n]P_{\mu}^{(1/\gamma)}[n+\beta/\gamma-1]
\prod_{i=1}^n\frac{\Gamma(\beta+(i-1)\gamma)\Gamma(\alpha+(n-i)\gamma+\lambda_i)\Gamma(1+i\gamma)}
{\Gamma(\alpha+\beta+(2n-l-i-1)\gamma+\lambda_i)\Gamma(1+\gamma)}\nonumber\\
&\qquad\times
\prod_{i=1}^n\prod_{j=1}^l\frac{\Gamma(\alpha+\beta+(2n-i-j-1)\gamma+\lambda_i+\mu_j)}
{\Gamma(\alpha+\beta+(2n-i-j)\gamma+\lambda_i+\mu_j)},\nonumber
\end{align}
where
\[
\mathrm{Re}(\alpha)>-\lambda_n, \quad
\mathrm{Re}(\beta)>0,\quad
\mathrm{Re}(\gamma)>-\min_{1\leq i\leq n-1}\Big\{\frac{1}{n},\frac{\mathrm{Re}(\alpha)+\lambda_i}{n-i},
\frac{\mathrm{Re}(\beta)}{n-1}\Big\}.
\]
Since Hua \cite{Hua} discovered the $\gamma=1$ and $\beta=\gamma$ case of the above integral,
and Kadell \cite{Kadell93} obtained the $\beta=\gamma$ case for general $\gamma$,
the $\beta=\gamma$ case of \eqref{e-ALFT} is called the Kadell--Hua integral.

The idea to prove Theorem~\ref{thm-AFLT}, is based on
the well-known fact that to prove the equality of two polynomials of degree at most $d$, it is sufficient to prove that they agree at $d+1$ distinct points.
We briefly outline the key steps.\vskip 0.2cm

\begin{enumerate}
\item \textbf{Polynomiality}.

It is routine to show that the constant term
$A_{n}(a,b,c,\lambda,\mu)$ (we refer the constant term as $A$ for short in this section) is a polynomial in $q^{a}$, assuming that all parameters but $a$ are fixed.
Then, we can extend the definition of $A$ for negative $a$, especially negative integers.
\vskip 0.2cm

\item \textbf{Rationality}
By a rationality result, see Corollary~\ref{cor-rationality} below, it suffices to prove \eqref{AFLT} for $c\geq l$. Here $l$ can be any nonnegative integer independent of $c$.
\vskip 0.2cm

\item \textbf{Determination of roots}

Let
\begin{align}\label{Roots-A}
A_1&=\{-ic-1,-ic-2,\dots,-ic-b\Mid i=0,1,\dots,n-1\},\\
A_2&=\{-(i-1)c+\lambda_{i}-1,-(i-1)c+\lambda_{i}-2,\dots,-(i-1)c\Mid i=1,\dots,\ell(\lambda)\},\nonumber\\
A_3&=\{-(n-j)c-b-1,-(n-j)c-b-2,\dots,-(n-j)c-b-\mu_{j}\Mid j=1,\dots,\ell(\mu)\},\nonumber
\end{align}
where $\ell(\lambda)$ is the length of the partition $\lambda$.
Suppose $c>b+\lambda_1+\mu_1$ in \eqref{Roots-A},
then all the elements of $A_1\cup A_2\cup A_3$ are distinct.

For $A$ viewed as a polynomial in $q^a$, we will determine all its roots under the assumption that $c$ is a sufficiently large integer (so that all the roots of $A$ are distinct).
To be precise, $A$ vanishes only when
$a$ equals one of the values in $A_1\cup A_2\cup A_3$.
\vskip 0.2cm

\item \textbf{Value at an additional point.}

We characterize the expressions for $A$ at $a=-b-1$ if $\ell(\mu)<n$
and at $a=-(n-\ell(\mu)-1)c-b-1$ if $\ell(\mu)\geq n$ respectively.

\end{enumerate}
The steps (1) and (2) are routine. A similar argument appeared in \cite{GX} and \cite{XZ}. The Step (3) is lengthy. To carry out the details, we need to mix three tools:
the iterated Laurent series, plethystic notation and substitution, and Cai's splitting idea
for Laurent polynomials. We have combined the first two tools in \cite{Zhou} to prove and extend Kadell's orthogonality conjecture. In Step (4), we reduce the expression for $A$ at the additional point to a similar type. We can uniquely determine a closed-form expression for $A$ after completing the above four steps.

This paper is organised as follows. In the next section, we introduce the basic notation used in this paper. In Section~\ref{sec-ple}, we give a brief introduction to a commonly used tool in the ring of symmetric functions --- the plethystic notation.
In Section~\ref{sec-iterate}, we present the essential material in the field of iterated Laurent series. In Section~\ref{sec-vanish}, we find that a family of constant terms vanish.
In Section~\ref{sec-Mac}, we obtain necessary results of Macdonald polynomials.
In Section~\ref{sec-poly}, we show the polynomiality and rationality of $A_n(a,b,c,\lambda,\mu)$. In Section~\ref{sec-prepare}, we prepare the preliminary for the
determination of the roots of $A_n(a,b,c,\lambda,\mu)$.
In the last section, we prove Theorem~\ref{thm-AFLT}.

\section{Basic notation}\label{sec-2}

In this section we introduce some basic notation used throughout this paper.

A partition $\lambda=({\lambda_1,\lambda_2,\dots)}$ is a sequence of decreasing nonnegative integers such that
only finitely-many $\lambda_i$ are positive.
The length of a partition $\lambda$, denoted
$\ell(\lambda)$ is the number of nonzero $\lambda_i$ (such nonzero $\lambda_i$ are called parts of $\lambda$).
The tails of zeros of a partition is usually not displayed.
If there are exactly $m_i$ of the parts of $\lambda$ are equal to $i$,
we can also denote a partition by
\[
\lambda=(1^{m_1}2^{m_2}\cdots r^{m_r}\cdots).
\]
The Young diagram of a partition is a collection of left-aligned rows of squares such that the $i$th row contains $\lambda_i$ squares. For example, the partition $(7,4,3,1)$ corresponds to
\begin{center}
\includegraphics[height=0.15\textwidth]{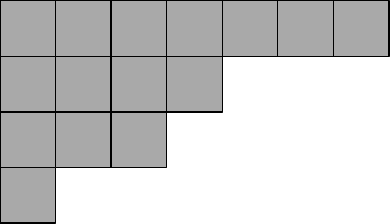}
\end{center}
If $\lambda,\mu$ are partitions, we shall write $\mu \subset \lambda$ if $\mu_i\leq \lambda_i$
for all $i\geq 1$. We can construct a skew diagram $\lambda/\mu$ whenever $\mu \subset \lambda$
by removing the squares of $\mu$ from those of $\lambda$.
For example, if $\lambda=(7,4,3,1)$ and $\mu=(4,3,1)$, then
$\mu \subset \lambda$ and the skew diagram $\lambda/\mu$ is the following:
\begin{center}
\includegraphics[height=0.15\textwidth]{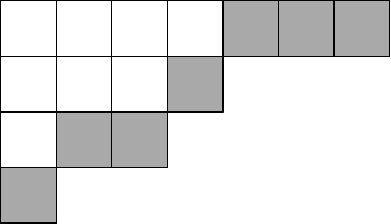}
\end{center}
A skew diagram $\theta$ is said to be a horizontal $r$-strip if $|\theta|=r$ and it contains at most one square in every column.
For example, the above diagram for $(7,4,3,1)/(4,3,1)$ is a horizontal 7-strip.
The size of the partition $\lambda$ is $|\lambda|=\lambda_1+\lambda_2+\cdots$.
If $\lambda,\mu$ are partitions of the same size then we write $\mu\leq \lambda$ if
$\mu_1+\cdots+\mu_i\leq \lambda_1+\cdots+\lambda_i$ for all $i\geq 1$.
This partial order on the set of partitions of the same size is called the dominance order.
As usual, we write $\mu<\lambda$ if $\mu\leq \lambda$ but $\mu\neq \lambda$.

The infinite $q$-shifted factorial is defined as
\[
(z)_{\infty}=(z;q)_{\infty}:=
\prod_{i=0}^{\infty}(1-zq^i)
\]
where, typically, we suppress the base $q$.
Then, for $k$ an integer,
\[
(z)_k=(z;q)_k:=\frac{(z;q)_{\infty}}
{(zq^k;q)_{\infty}}.
\]
Note that
\[
(z)_k=
\begin{cases}
(1-z)(1-zq)\cdots (1-zq^{k-1}) & \text{if $k>0,$}
\\
1 & \text{if $k=0,$}\\
\displaystyle
\frac{1}{(1-zq^k)(1-zq^{k+1})\cdots(1-zq^{-1})}=\frac{1}{(zq^k)_{-k}}
& \text{if $k<0$.}
\end{cases}
\]
Using the above we define the
$q$-binomial coefficient as
\[
\qbinom{n}{k}=\frac{(q^{n-k+1})_k}{(q)_k}
\]
for $n$ an arbitrary integer and $k$ a nonnegative integer.

By convention, if $k<j$ then we set $\sum_{i=j}^k n_i:=0$ and $\prod_{i=j}^k n_i:=1$.

\section{Plethystic notation}\label{sec-ple}

In this section, we briefly introduce plethystic notation and substitutions.
For more details, see \cite{haglund,Lascoux,Mellit,RW}.

Denote by $\Lambda_{\mathbb{F}}$ the ring of symmetric functions in countably many variables with coefficients in a field $\mathbb{F}$.
For an alphabet $X=(x_1,x_2,\dots)$,
we additively write $X:=x_1+x_2+\cdots$, and
use plethystic brackets to indicate this additive notation:
\[
f(X)=f(x_1,x_2,\dots)=f[x_1+x_2+\cdots]=f[X], \quad
\text{for $f\in \Lambda_{\mathbb{F}}$.}
\]

For $r$ a positive integer, define the power sum symmetric function $p_r$ in the alphabet $X$ as
\[
p_r=\sum_{i\geq 1}x_i^r,
\]
and $p_0=1$.
For a partition $\lambda=(\lambda_1,\lambda_2,\dots)$, let
\[
p_{\lambda}=p_{\lambda_1}p_{\lambda_2}\cdots.
\]
The $p_{\lambda}$ form a basis of $\Lambda_{\mathbb{Q}}$ and the $p_r$ are algebraically independent over
$\mathbb{Q}$ \cite{Mac95}. That is,
\[
\Lambda_{\mathbb{Q}}=\mathbb{Q}[p_1,p_2,\dots].
\]
For two alphabets $X$ and $Y$, define
\begin{subequations}\label{asm}
\begin{align}
\label{asm1}
p_r[X+Y]&=p_r[X]+p_r[Y], \\
p_r[X-Y]&=p_r[X]-p_r[Y], \\
p_r[XY]&=p_r[X]p_r[Y],\label{asm3}
\end{align}
and a particular division
\begin{equation}\label{division}
p_r\Big[\frac{X}{1-t}\Big]=\frac{p_r[X]}{1-t^r}.
\end{equation}
\end{subequations}
Note that the alphabet $1/(1-t)$ may be interpreted as
the infinite alphabet $1+t+t^2+\cdots$.
If $f\in \Lambda_{\mathbb{Q}}$ and $f=\sum_{\lambda}c_{\lambda}p_{\lambda}$,
we can extend the above plethystic notation to any symmetric function $f$
by
\[
f[X]=\sum_{\lambda}c_{\lambda}\prod_{i\geq 1}p_{\lambda_i}[X].
\]
Here the $X$ can be any alphabet satisfies the rules in \eqref{asm}.
For example, we can take $X\mapsto Y+Z$ and $X\mapsto YZ$.

Occasionally we need to use an ordinary minus sign in plethystic notation.
To distinguish this from a plethystic minus sign, we denote by $\epsilon$ the alphabet consisting of the single letter $-1$, so that for $f\in \Lambda_{\mathbb{F}}$
\[
f(-x)=f(-x_1,-x_2,\dots)=f[\epsilon x_1+\epsilon x_2+\cdots]=f[\epsilon X].
\]
Hence
\[
p_r[\epsilon X]=(-1)^rp_r[X], \quad p_r[-\epsilon X]=(-1)^{r-1}p_r[X].
\]
If $f$ is homogeneous symmetric function of degree $k$ then
\begin{equation}\label{e-homo-sym}
f[aX]=a^kf[X]
\end{equation}
for a single-letter alphabet $a$.
In particular,
\begin{equation}\label{e-Mac3}
f[\epsilon X]=(-1)^kf[X].
\end{equation}
From \eqref{asm1}, we also have
\[
p_r[2X]:=p_r[X+X]=2p_r[X].
\]
We extend the above to any $k\in \mathbb{F}$ via
\begin{equation}
p_r[kX]=kp_r[X].
\end{equation}
Note that this may lead to some notational ambiguities.
We will indicate that a symbol such as $a$ or $k$
represents a letter or a binomial element\footnote{In \cite[p. 32]{Lascoux} Lascoux referred to $k\in \mathbb{F}$ as a binomial element.} whenever not clear.

\section{Constant term evaluations using iterated Laurent series}\label{sec-iterate}

In this section we present a basic lemma for extracting
constant terms from rational functions in the field of iterated Laurent series.
In this paper, we denote $K=\CC(q)$ and work in the field of iterated
Laurent series $K\langle\!\langle x_n, x_{n-1},\dots,x_0\rangle\!\rangle
=K(\!(x_n)\!)(\!(x_{n-1})\!)\cdots (\!(x_0)\!)$.
That is, elements of this field are regarded first as Laurent series in $x_0$, then as
Laurent series in $x_1$, and so on.
For a more detailed account of the properties of this field,
see \cite{xinresidue} and~\cite{xiniterate}.
A crucial fact is that the field $K(x_0,\dots,x_n)$ of
rational functions forms a subfield of
$K\langle\!\langle x_n, x_{n-1},\dots,x_0\rangle\!\rangle$, so that every rational function is identified as its unique Laurent series expansion in the field of iterated Laurent series.

The following series expansion of $1/(1-cx_i/x_j)$
for $c\in K\setminus \{0\}$ forms a key ingredient in our approach:
\[
\frac{1}{1-c x_i/x_j}=
\begin{cases} \displaystyle \sum_{l\geq 0} c^l (x_i/x_j)^l
& \text{if $i<j$}, \\[5mm]
\displaystyle -\sum_{l<0} c^l (x_i/x_j)^l
& \text{if $i>j$}.
\end{cases}
\]
Thus, the constant term of $1/(1-c x_i/x_j)$ with respect to $x_i$
is $1$ if $i<j$ and $0$ if $i>j$.
That is,
\begin{equation}
\label{e-ct}
\CT_{x_i} \frac{1}{1-c x_i/x_j} =
\begin{cases}
    1 & \text{if $i<j$}, \\
    0 & \text{if $i>j$}, \\
\end{cases}
\end{equation}
where, for $f\in K\langle\!\langle x_n, x_{n-1},\dots,x_0\rangle\!\rangle$,
we use the notation $\CT\limits_{x_i} f$ to denote taking the constant term
of $f$ with respect to $x_i$.
An important property of the constant term operators defined this
way is their commutativity:
\[
\CT_{x_i} \CT _{x_j} f = \CT_{x_j} \CT_{x_i} f.
\]
This means that we take constant terms with respect to a set of variables.

The following lemma is a basic tool for
extracting constant terms from rational functions. It first
appeared in~\cite{GX}. We restated this lemma in \cite{Zhou}.
\begin{lem}\label{lem-prop}
For a positive integer $m$, let $p(x_k)$ be a Laurent polynomial
in $x_k$ of degree at most $m-1$ with coefficients in
$K\langle\!\langle x_n,\dots,x_{k-1},x_{k+1},\dots,x_0\rangle\!\rangle$.
Let $0\leq i_1\leq\dots\leq i_m\leq n$ such that all $i_r\neq k$,
and define
\begin{equation}\label{e-defF}
f=p(x_k)/D,
\end{equation}
where $D$ is of the form
\begin{equation}\label{e-formD}
\prod_{r=1}^m (1-c_r x_k/ x_{i_r}),
\end{equation}
and $c_1,\dots,c_m\in K\setminus \{0\}$ such that $c_r\neq c_s$ if $x_{i_r}=x_{i_s}$.
Then
\begin{equation}\label{e-almostprop}
\CT_{x_k} f=\sum_{\substack{r=1 \\[1pt] i_r>k}}^m
\big(f\,(1-c_r x_k/x_{i_r})\big)\Big|_{x_k=c_r^{-1}x_{i_r}}.
\end{equation}
\end{lem}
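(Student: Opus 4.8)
The plan is to prove \eqref{e-almostprop} by combining a partial fraction expansion in $x_k$ with the elementary constant-term rule \eqref{e-ct}. Since both sides of \eqref{e-almostprop} are $K\langle\!\langle x_n,\dots,x_{k-1},x_{k+1},\dots,x_0\rangle\!\rangle$-linear in $p$, and $p$ has only finitely many terms, I would first reduce to the case of a single monomial $p(x_k)=x_k^{j}$ with $j\le m-1$, where $j$ is an arbitrary (possibly negative) integer. Throughout I treat $x_k$ as the active variable, so that the coefficients of all rational functions in play lie in the coefficient field $K\langle\!\langle x_n,\dots,x_{k-1},x_{k+1},\dots,x_0\rangle\!\rangle$.

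Next I would record that the $m$ linear factors $1-c_rx_k/x_{i_r}$, viewed as polynomials in $x_k$, have pairwise distinct roots $x_k=c_r^{-1}x_{i_r}$: if $x_{i_r}=x_{i_s}$ the hypothesis forces $c_r\ne c_s$, while if $x_{i_r}\ne x_{i_s}$ the two roots are distinct monomials. Hence $D$ is separable in $x_k$ and partial fractions applies. For $j\ge 0$ the numerator $x_k^{j}$ has degree $j\le m-1$ strictly below $\deg_{x_k}D=m$, so there is no polynomial part and
\[
\frac{x_k^{j}}{D}=\sum_{r=1}^m\frac{A_r}{1-c_rx_k/x_{i_r}},\qquad
A_r=\Big(\frac{x_k^{j}}{D}\,(1-c_rx_k/x_{i_r})\Big)\Big|_{x_k=c_r^{-1}x_{i_r}}.
\]
For $j<0$, writing $x_k^{j}/D=1/(x_k^{|j|}D)$ introduces a spurious pole at $x_k=0$ of order $|j|$, so the expansion gains extra terms,
\[
\frac{x_k^{j}}{D}=\sum_{t=1}^{|j|}\frac{C_t}{x_k^{t}}+\sum_{r=1}^m\frac{A_r}{1-c_rx_k/x_{i_r}},
\]
with the very same coefficients $A_r$ at the simple poles $x_k=c_r^{-1}x_{i_r}$ (the numerator degree is again strictly smaller than the denominator degree, so no polynomial part appears).

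Finally I would apply $\CT_{x_k}$ termwise. Each summand $C_t/x_k^{t}$ with $t\ge 1$ is a pure negative power of $x_k$ and so has no constant term, while by \eqref{e-ct} each summand $A_r/(1-c_rx_k/x_{i_r})$ contributes $A_r$ when $i_r>k$ and $0$ when $i_r<k$, because $A_r$ is free of $x_k$. Summing gives $\CT_{x_k}(x_k^{j}/D)=\sum_{r:\,i_r>k}A_r$, which is exactly \eqref{e-almostprop} for $p=x_k^{j}$; linearity then delivers the general statement. The only genuinely delicate point is the handling of the negative powers of $x_k$: one must verify that the degree hypothesis $\deg_{x_k}p\le m-1$ (which is precisely what excludes a polynomial part, whose nonzero constant term would otherwise corrupt the formula) together with the harmlessness of the pole at $x_k=0$ (whose partial-fraction terms are pure negative powers, hence killed by $\CT_{x_k}$ and absent from the residue coefficients $A_r$) leaves the claimed evaluation intact. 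I expect this bookkeeping, rather than the partial fraction expansion itself, to be the main thing to get right.
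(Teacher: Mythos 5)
Your proposal is correct and is essentially the argument the paper relies on: the paper defers the proof of Lemma~\ref{lem-prop} to Gessel--Xin~\cite{GX}, whose proof is exactly this partial-fraction expansion in $x_k$ (no polynomial part by the degree bound $\deg_{x_k}p\le m-1$, pure negative powers from the pole at $x_k=0$ killed by $\CT_{x_k}$, and the simple-pole terms evaluated via \eqref{e-ct}, with distinctness of the roots $c_r^{-1}x_{i_r}$ guaranteed by the hypothesis $c_r\neq c_s$ when $x_{i_r}=x_{i_s}$). Your handling of the negative-exponent monomials and of the coefficients $A_r=\big(f\,(1-c_rx_k/x_{i_r})\big)\big|_{x_k=c_r^{-1}x_{i_r}}$ matches the cited proof, so nothing is missing.
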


At the end of this section, we obtain an equivalence between two kinds of constant terms.
\begin{prop}\label{prop-equiv}
Let $f(x)$ be a rational function invariant under any permutation of $x$. Then
\begin{equation}\label{e-equiv}
\CT_x f(x)\prod_{1\leq i<j\leq n}(x_i/x_j)_c(qx_j/x_i)_c
=\frac{1}{n!}\prod_{i=1}^{n-1}\frac{1-q^{(i+1)c}}{1-q^c}\times\CT_x f(x)\prod_{1\leq i\neq j\leq n} (x_i/x_j)_c.
\end{equation}
\end{prop}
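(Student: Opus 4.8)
The plan is to exploit the full symmetry of the weight $\prod_{1\leq i<j\leq n}(x_i/x_j)_c(qx_j/x_i)_c$ under permutations of the variables, which is the key structural feature linking the "ordered" product (with $i<j$) to the "unordered" product (with $i\neq j$). First I would record the elementary identity
\[
(x_i/x_j)_c \,(qx_j/x_i)_c = (x_i/x_j)_c\,(x_j/x_i)_c\cdot\frac{(qx_j/x_i)_c}{(x_j/x_i)_c},
\]
and more usefully compare the two candidate weights directly: the product $\prod_{i<j}(x_i/x_j)_c(qx_j/x_i)_c$ and the product $\prod_{i\neq j}(x_i/x_j)_c$. Since $\prod_{i\neq j}(x_i/x_j)_c=\prod_{i<j}(x_i/x_j)_c(x_j/x_i)_c$, the whole matter reduces to understanding the ratio of a single pair factor $(qx_j/x_i)_c$ against $(x_j/x_i)_c$, and tracking how the $q$-shift interacts with the constant-term operator.

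The cleaner route, which I expect to be the main engine, is to symmetrize. Because $f(x)$ is $S_n$-invariant and $\CT_x$ is insensitive to relabeling the variables, I would apply the averaging operator $\frac{1}{n!}\sum_{\sigma\in S_n}\sigma$ to the right-hand integrand. The product $\prod_{i\neq j}(x_i/x_j)_c$ is itself symmetric, so symmetrizing does nothing to it and merely supplies the factor $1/n!$; the real content is to show that symmetrizing (or directly comparing) produces exactly the normalizing constant $\prod_{i=1}^{n-1}\frac{1-q^{(i+1)c}}{1-q^c}$ times the ordered weight. Concretely, I would prove the pointwise (Laurent-series) identity
\[
\frac{1}{n!}\sum_{\sigma\in S_n}\prod_{1\leq i<j\leq n}\Big(\frac{x_{\sigma(i)}}{x_{\sigma(j)}}\Big)_c\Big(\frac{qx_{\sigma(j)}}{x_{\sigma(i)}}\Big)_c
= \text{(const.)}\cdot\prod_{1\leq i\neq j\leq n}\Big(\frac{x_i}{x_j}\Big)_c,
\]
after taking the constant term, or else argue at the level of constant terms that the two weights agree up to the stated scalar. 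The scalar $\prod_{i=1}^{n-1}\frac{1-q^{(i+1)c}}{1-q^c}=\prod_{i=1}^{n-1}\frac{(q^c)_{?}}{\dots}$ is the $q$-analogue of the familiar product $\prod(i+1)=n!/1$ type normalization, and it strongly suggests an induction on $n$ or a direct evaluation of a single auxiliary constant term of the form $\CT_x\prod_{i\neq j}(x_i/x_j)_c\big/\prod_{i<j}(\cdots)$.

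To make this rigorous I would likely isolate one variable, say $x_n$, and use Lemma~\ref{lem-prop} to extract its constant term from the pair factors it appears in, comparing the result for the two weights; the factor $\frac{1-q^{(i+1)c}}{1-q^c}$ should emerge as the constant term of a single-variable $q$-binomial-type expression at each stage, and the full product then follows by iterating over $x_n,x_{n-1},\dots,x_2$. The main obstacle will be bookkeeping: keeping the iterated-Laurent-series order straight when shifting $(x_j/x_i)_c\mapsto(qx_j/x_i)_c$, since this $q$-shift changes which terms are "constant" in the expansion, and verifying that the symmetrization genuinely collapses to the ordered product with precisely the advertised scalar rather than an $S_n$-average that only matches after the constant term is taken. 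I would handle this by working entirely inside $K\langle\!\langle x_n,\dots,x_0\rangle\!\rangle$ and appealing to \eqref{e-ct} for the elementary constant terms, so that every manipulation is an honest identity of Laurent series and the permutation-invariance of $f$ can be invoked freely under $\CT_x$.
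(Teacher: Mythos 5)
Your proposal is correct and takes essentially the same route as the paper: after observing that $(x_i/x_j)_c(qx_j/x_i)_c=(x_i/x_j)_c(x_j/x_i)_c\cdot\frac{1-q^cx_j/x_i}{1-x_j/x_i}$, the paper symmetrizes under $\CT_x$ exactly as you suggest, using the $\mathfrak{S}_n$-invariance of $f(x)$ and of $\prod_{1\leq i\neq j\leq n}(x_i/x_j)_c$ to factor them out of the average. The one step you leave open --- evaluating the symmetrized sum $\sum_{w\in\mathfrak{S}_n}w\circ\prod_{i<j}\frac{1-q^cx_j/x_i}{1-x_j/x_i}$ to the scalar $\prod_{i=1}^{n-1}\frac{1-q^{(i+1)c}}{1-q^c}$ --- is not rederived by induction or by Lemma~\ref{lem-prop} in the paper but is simply the classical identity \cite[III, (1.4)]{Mac95} with $t=q^c$, which closes the proof immediately.
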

\begin{proof}
For $w:=(w_1,\dots,w_n)\in \mathfrak{S}_n$ and any function $g(x)$, define
\[
w\circ g(x):=g(x_{w_1},\dots,x_{w_n}).
\]
Since $w$ acts on any Laurent polynomial (or Laurent series) does not change the constant term, we have
\begin{multline}\label{e-equiv1}
\CT_x f(x)\prod_{1\leq i<j\leq n}(x_i/x_j)_c(qx_j/x_i)_c \\
\quad =\frac{1}{n!}\sum_{w\in \mathfrak{S}_n}\CT_x w\circ \bigg(f(x)\prod_{1\leq i\neq j\leq n} (x_i/x_j)_c
\prod_{1\leq i<j\leq n}\frac{1-q^cx_j/x_i}{1-x_j/x_i}\bigg).
\end{multline}
Because $f(x)\prod_{1\leq i\neq j\leq n} (x_i/x_j)_c$ is invariant under the action of $w$, the right-hand side of \eqref{e-equiv1} becomes
\[
\frac{1}{n!}\CT_x f(x)\prod_{1\leq i\neq j\leq n} (x_i/x_j)_c \cdot \sum_{w\in \mathfrak{S}_n}w\circ
\bigg(\prod_{1\leq i<j\leq n}\frac{1-q^cx_j/x_i}{1-x_j/x_i}\bigg),
\]
which is the right-hand side of \eqref{e-equiv} by the next identity \cite[III, (1.4)]{Mac95}.
\begin{equation}\label{e-equiv2}
\sum_{w\in \mathfrak{S}_n}w\circ
\bigg(\prod_{1\leq i<j\leq n}\frac{1-q^cx_j/x_i}{1-x_j/x_i}\bigg)
=\prod_{i=1}^{n-1}\frac{1-q^{(i+1)c}}{1-q^c}.\qedhere
\end{equation}
\end{proof}

\section{A family of vanishing constant terms}\label{sec-vanish}

In this section, we find that a family of constant terms vanish using the splitting formula obtained by Cai \cite{cai}.

Let $w$ be a parameter such that
all terms of the form $dz_i/w$
satisfy $|dz_i/w|<1$, where $d\in \{1,q,q^2,\dots,q^{c-1}\}$. Hence,
\[
\frac{1}{1-dz_i/w}=\sum_{j\geq 0} (dz_i/w)^j.
\]
Then it is easy to see that
\begin{equation}\label{e-relation}
\CT_z z^v\prod_{1\leq i<j\leq n}\big(z_i/z_j\big)_c\big(qz_j/z_i\big)_c
=\CT_{z,w}\frac{z^v\prod_{1\leq i<j\leq n}\big(z_i/z_j\big)_c\big(qz_j/z_i\big)_c}{\prod_{i=1}^n(z_i/w)_c},
\end{equation}
where $v:=(v_1,v_2,\dots,v_n)\in \mathbb{Z}^n$, $z=(z_1,\dots,z_n)$, and
$z^v$ denotes the monomial $z_1^{v_1}\cdots z_n^{v_n}$.
Note that if $|v|:=v_1+\cdots+v_n\neq 0$ then
the constant term in the left-hand side of \eqref{e-relation} vanishes by the homogeneity.

To obtain the main result in this section, we need the equal parameter case of \cite[Theorem 3.3]{cai}.
\begin{prop}\label{prop-split}
We can write
\begin{equation}\label{e-Fsplit}
\frac{\prod_{1\leq i<j\leq n}\big(z_i/z_j\big)_c\big(qz_j/z_i\big)_c}{\prod_{i=1}^n(z_i/w)_c}=\sum_{i=1}^{n}\sum_{j=0}^{c-1}\frac{A_{ij}}{1-q^jz_i/w},
\end{equation}
where
\begin{multline}\label{A}
A_{ij}=\frac{q^{c\big((j+1)n-i-j\big)}}{(q^{-j})_j(q)_{c-j-1}}
\prod_{l=1}^{i-1}\big(q^{1-c}z_i/z_l\big)_j\big(q^{j+1}z_i/z_l\big)_{c-j}
\prod_{l=i+1}^{n}\big(q^{-c}z_i/z_l\big)_{j+1}\big(q^{j+1}z_i/z_l\big)_{c-j-1}
\\
\times
\prod_{\substack{1\leq u<v\leq n\\u,v\neq i}}\big(z_u/z_v\big)_c\big(qz_v/z_u\big)_c.
\end{multline}
\end{prop}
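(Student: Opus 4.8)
The plan is to prove \eqref{e-Fsplit} by comparing partial fraction expansions in the variable $w$. First I would observe that the numerator $\prod_{1\le u<v\le n}(z_u/z_v)_c(qz_v/z_u)_c$ is independent of $w$, whereas $(z_i/w)_c=\prod_{k=0}^{c-1}(1-q^kz_i/w)$; writing $t=1/w$, the left-hand side becomes a constant (in $t$) divided by $\prod_{i=1}^n\prod_{k=0}^{c-1}(1-q^kz_it)$, a polynomial of degree $nc$ in $t$. Since the $z_i$ are indeterminates over $K$, the $nc$ points $w=q^kz_i$ are pairwise distinct, so every pole is simple, and the left-hand side is a proper rational function of $t$. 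Hence it has a unique partial fraction expansion of exactly the form $\sum_{i=1}^n\sum_{j=0}^{c-1}A_{ij}/(1-q^jz_i/w)$ with no polynomial part, and each $A_{ij}$ is the associated coefficient.

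It then remains to evaluate
\begin{equation*}
A_{ij}=\left.\left((1-q^jz_i/w)\,\frac{\prod_{1\le u<v\le n}(z_u/z_v)_c(qz_v/z_u)_c}{\prod_{i'=1}^n(z_{i'}/w)_c}\right)\right|_{w=q^jz_i}.
\end{equation*}
Cancelling the vanishing factor against the $k=j$ term of $(z_i/w)_c$ and then setting $w=q^jz_i$ turns each surviving factor $1-q^kz_i/w$ ($k\ne j$) into $1-q^{k-j}$; these collect into $(q^{-j})_j(q)_{c-j-1}$, the denominator in \eqref{A}. The factors $(z_{i'}/w)_c$ with $i'\ne i$ become $(q^{-j}z_{i'}/z_i)_c$, while the $w$-free numerator splits as the block over pairs $u,v\ne i$ (matching the last product in \eqref{A}) times $\prod_{l<i}(z_l/z_i)_c(qz_i/z_l)_c$ and $\prod_{l>i}(z_i/z_l)_c(qz_l/z_i)_c$.

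The heart of the argument, and the step I expect to be most delicate, is then a $q$-shifted factorial simplification carried out separately for $l<i$ and $l>i$. Using the splitting identity $(a)_c=(a)_r(aq^r)_{c-r}$ to peel off $(q^{j+1}z_i/z_l)_{c-j}$ (resp.\ $(q^{j+1}z_i/z_l)_{c-j-1}$), and the reflection formula $(a)_r=(-a)^rq^{\binom r2}(q^{1-r}/a)_r$ to convert the remaining telescoped ratio of $(z_l/z_i)_c$ (resp.\ $(z_i/z_l)_c$) against $(q^{-j}z_l/z_i)_c$ into $(q^{1-c}z_i/z_l)_j$ (resp.\ $(q^{-c}z_i/z_l)_{j+1}$), one recovers exactly the two products appearing in \eqref{A}. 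The only genuine bookkeeping is the accumulated power of $q$: each $l<i$ should contribute $q^{cj}$ and each $l>i$ should contribute $q^{c(j+1)}$, so that the total is $q^{cj(i-1)+c(j+1)(n-i)}=q^{c((j+1)n-i-j)}$, matching the prefactor in \eqref{A}. Verifying this exponent count completes the identification and proves \eqref{e-Fsplit}. Alternatively, since the statement is the equal-parameter specialisation of \cite[Theorem 3.3]{cai}, one may simply invoke that result.
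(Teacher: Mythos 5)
Your proposal is correct, and it is worth noting that it does more than the paper does: the paper offers no proof of Proposition~\ref{prop-split} at all, simply quoting it as the equal-parameter case of \cite[Theorem 3.3]{cai}, so your partial-fraction argument supplies a self-contained elementary derivation of exactly the special case needed. The structure is sound: in $t=1/w$ the left-hand side is a degree-$0$ numerator over a degree-$nc$ denominator with $nc$ pairwise distinct simple zeros $w=q^kz_i$ (distinct because the $z_i$ are independent indeterminates), so the expansion $\sum_{i,j}A_{ij}/(1-q^jz_i/w)$ with no polynomial part exists and is unique, and $A_{ij}=\bigl((1-q^jz_i/w)\cdot\mathrm{LHS}\bigr)\big|_{w=q^jz_i}$. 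The residue computation also checks out in detail: the surviving factors of $(z_i/w)_c$ give $\prod_{k=0}^{j-1}(1-q^{k-j})\prod_{k=j+1}^{c-1}(1-q^{k-j})=(q^{-j})_j\,(q)_{c-j-1}$; and for the delicate step, writing $x=z_i/z_l$ one verifies
\begin{equation*}
\frac{(1/x)_c\,(qx)_c}{(q^{-j}/x)_c}=q^{cj}\,(q^{1-c}x)_j\,(q^{j+1}x)_{c-j},
\qquad
\frac{(x)_c\,(q/x)_c}{(q^{-j}/x)_c}=q^{c(j+1)}\,(q^{-c}x)_{j+1}\,(q^{j+1}x)_{c-j-1},
\end{equation*}
by telescoping the ratio of shifted factorials and applying the reflection $1-q^k/x=-(q^k/x)(1-q^{-k}x)$ factor by factor; the accumulated powers of $q$ are $q^{cj}$ for each $l<i$ and $q^{c(j+1)}$ for each $l>i$, and $q^{cj(i-1)+c(j+1)(n-i)}=q^{c((j+1)n-i-j)}$ as you claim. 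The only caveat is the implicit hypothesis $c\geq 1$ (for $c=0$ the right-hand side of \eqref{e-Fsplit} is an empty sum while the left-hand side is $1$, so the statement itself degenerates), which is harmless in the context where the proposition is applied.
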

Note that all the $A_{ij}$ are polynomials in $z_i$.

For a nonnegative integer $r$ and an alphabet $X:=\{x_1,x_2,\dots\}$, define the $r$-th complete symmetric function $h_r(X)$
in terms of its generating function as
\begin{equation}\label{e-gfcomplete}
\sum_{r\geq 0} t^r h_r(X)=\prod_{i\geq 1}
\frac{1}{1-tx_i}.
\end{equation}
Using Proposition~\ref{prop-split}, we find that a family of constant terms vanish.
\begin{lem}\label{lem-positiveroots}
Let $v=(v_1,\dots,v_n)\in \mathbb{Z}^n$ and $\lambda$ be a partition such that $|v|=|\lambda|$. For
$\lambda_1>\max\{v_i\mid i=1,\dots,n\}$,
\begin{equation}
\CT_z\frac{1}{z^v}h_{\lambda}\Big[\sum_{i=1}^{n}\frac{1-q^c}{1-q}z_i\Big]
\prod_{1\leq i<j\leq n}\big(z_i/z_j\big)_c\big(qz_j/z_i\big)_c
=0.
\end{equation}
\end{lem}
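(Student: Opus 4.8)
The plan is to convert the single complete symmetric function of largest degree into an auxiliary constant-term variable, apply Cai's splitting formula \eqref{e-Fsplit}, and then annihilate the result by a one-variable degree count.

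First I would make the alphabet explicit. Since $\frac{1-q^c}{1-q}=1+q+\cdots+q^{c-1}$, the alphabet $\sum_{i=1}^n\frac{1-q^c}{1-q}z_i$ is just the finite alphabet $\{q^d z_i: 1\le i\le n,\ 0\le d\le c-1\}$. From the generating function \eqref{e-gfcomplete} with $t=1/w$,
\[
\sum_{r\ge 0}w^{-r}h_r\Big[\sum_{i=1}^n\frac{1-q^c}{1-q}z_i\Big]=\prod_{i=1}^n\prod_{d=0}^{c-1}\frac{1}{1-q^dz_i/w}=\prod_{i=1}^n\frac{1}{(z_i/w)_c},
\]
so that $h_{\lambda_1}[\,\cdot\,]=\CT_w w^{\lambda_1}\prod_{i=1}^n 1/(z_i/w)_c$, where $w$ is adjoined as an outermost (large) variable in the sense of Section~\ref{sec-vanish}, i.e.\ each $z_i/w$ is small.

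Next I would peel off only the largest part. Writing $h_\lambda=h_{\lambda_1}\,g$ with $g:=h_{\lambda_2}\cdots h_{\lambda_{\ell(\lambda)}}\big[\sum_i\frac{1-q^c}{1-q}z_i\big]$, which is an honest symmetric polynomial in $z_1,\dots,z_n$ with nonnegative powers, and using that $g$, $z^{-v}$ and $\prod_{i<j}(z_i/z_j)_c(qz_j/z_i)_c$ are all free of $w$, the left-hand side becomes
\[
\CT_{z,w}\frac{g}{z^v}\,w^{\lambda_1}\,\frac{\prod_{1\le i<j\le n}(z_i/z_j)_c(qz_j/z_i)_c}{\prod_{i=1}^n(z_i/w)_c}.
\]
Now I would invoke Proposition~\ref{prop-split} to replace the fraction by $\sum_{i=1}^n\sum_{j=0}^{c-1}A_{ij}/(1-q^jz_i/w)$, where each $A_{ij}$ is independent of $w$. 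Performing $\CT_w$ first (legitimate since $w$ is outermost), the expansion $1/(1-q^jz_i/w)=\sum_{p\ge 0}q^{jp}z_i^pw^{-p}$ gives $\CT_w w^{\lambda_1}/(1-q^jz_i/w)=q^{j\lambda_1}z_i^{\lambda_1}$, whence
\[
\CT_z\,\frac{1}{z^v}h_\lambda\Big[\sum_i\frac{1-q^c}{1-q}z_i\Big]\prod_{1\le i<j\le n}(z_i/z_j)_c(qz_j/z_i)_c=\sum_{i=1}^n\sum_{j=0}^{c-1}q^{j\lambda_1}\,\CT_z\,\frac{g\,z_i^{\lambda_1}A_{ij}}{z^v}.
\]

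The crux is then a one-variable degree count. In the $(i,j)$ summand I would isolate the variable $z_i$: the monomial factor contributes $z_i^{\lambda_1-v_i}$, while $g$ is a polynomial in $z_i$ and, by the remark after Proposition~\ref{prop-split}, so is $A_{ij}$. Hence every monomial of $g\,z_i^{\lambda_1}A_{ij}/z^v$ carries a power of $z_i$ that is at least $\lambda_1-v_i$. The hypothesis $\lambda_1>\max\{v_i\}$ forces $\lambda_1-v_i\ge 1$, so this power is strictly positive and $\CT_{z_i}$ of the summand vanishes; by commutativity of the constant-term operators the whole $\CT_z$ vanishes, and summing over $i,j$ yields the claimed identity. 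I expect the part requiring the most care to be the bookkeeping in the iterated Laurent series --- justifying that $w$ may be adjoined as the outermost variable and that $\CT_w$ may be carried out before $\CT_z$ --- together with the fact, recorded after Proposition~\ref{prop-split}, that each $A_{ij}$ has only nonnegative powers of $z_i$, which is precisely what makes the final degree count decisive.
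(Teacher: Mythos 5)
Your proposal is correct and is essentially the paper's own proof: the same conversion of $h_{\lambda_1}$ into a constant term $\CT_w$ via the generating function \eqref{e-gfcomplete}, the same application of Cai's splitting formula (Proposition~\ref{prop-split}), the same evaluation $\CT_w w^{\lambda_1}/(1-q^jz_i/w)=q^{j\lambda_1}z_i^{\lambda_1}$, and the same final degree count using $\lambda_1-v_i\geq 1$ together with the polynomiality of the $A_{ij}$ in $z_i$. The only difference is cosmetic: you make explicit the iterated-Laurent-series bookkeeping (adjoining $w$ as the outermost variable and commuting $\CT_w$ past $\CT_z$) that the paper leaves implicit.
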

Note that we can write $h_{\lambda}\big[\sum_{i=1}^{n}\frac{1-q^c}{1-q}z_i\big]=g_{\lambda}(z;q,q^c)$
for modified complete symmetric function by \eqref{modi-complete} below.
\begin{proof}
By the generating function for complete symmetric functions \eqref{e-gfcomplete},
\[
h_{r}\Big[\sum_{i=1}^{n}\frac{1-q^c}{1-q}z_i\Big]
=\CT_w \frac{w^r}{\prod_{i=1}^n(z_i/w)_c}
\]
for $r$ a nonnegative integer.
Thus
\begin{multline}\label{pr-1}
\CT_z\frac{1}{z^v}h_{\lambda}\Big[\sum_{i=1}^{n}\frac{1-q^c}{1-q}z_i\Big]
\prod_{1\leq i<j\leq n}\big(z_i/z_j\big)_c\big(qz_j/z_i\big)_c\\
=\CT_{z,w}\frac{w^{\lambda_1}h_{\lambda^{(1)}}\big[\sum_{i=1}^{n}\frac{1-q^c}{1-q}z_i\big]
\prod_{1\leq i<j\leq n}\big(z_i/z_j\big)_c\big(qz_j/z_i\big)_c}
{z^v\prod_{i=1}^n(z_i/w)_c},
\end{multline}
where $\lambda^{(1)}=(\lambda_2,\lambda_3,\dots)$.
Using \eqref{e-Fsplit}, we can write the right-hand side of \eqref{pr-1} as
\begin{equation}\label{pr-2}
\CT_{z,w}\sum_{i=1}^{n}\sum_{j=0}^{c-1}\frac{w^{\lambda_1}
h_{\lambda^{(1)}}\big[\sum_{t=1}^{n}\frac{1-q^{c}}{1-q}z_t\big]
A_{ij}}{z^v(1-q^jz_i/w)}.
\end{equation}
Taking the constant term with respect to $w$ in \eqref{pr-2} yields
\begin{equation}\label{pr-3}
\CT_{z}\sum_{i=1}^{n}\sum_{j=0}^{c-1}q^{j\lambda_1}z^{-v}z_i^{\lambda_1}
h_{\lambda^{(1)}}\Big[\sum_{t=1}^{n}\frac{1-q^{c}}{1-q}z_t\Big]A_{ij}.
\end{equation}
Since $\max\{v\}<\lambda_1$ and the $A_{ij}$ are polynomials in $z_i$, every term in the sums of \eqref{pr-3} is a polynomial in $z_i$ with no constant term.
Thus, the right-hand side of \eqref{pr-1} vanishes. Then the lemma follows.
\end{proof}

\section{Macdonald polynomials}\label{sec-Mac}

In this section, we give several essential results for Macdonald polynomials.

Given a sequence $\alpha=(\alpha_1,\alpha_2,\dots)$ of nonnegative integers such that $|\alpha|=\alpha_1+\alpha_2+\cdots$ is finite.
Let $X=\{x_1,x_2,\dots\}$ be an alphabet of countably many variables. Then the monomial symmetric function indexed by a partition $\lambda$ is defined as
\[
m_{\lambda}=m_{\lambda}(X):=\sum x_1^{\alpha_1}x_2^{\alpha_2}\cdots,
\]
where the sum is over all distinct permutations $\alpha$ of $\lambda$.
Denote $\mathbb{F}=\mathbb{Q}(q,t)$.
Let $\langle\cdot,\cdot\rangle:\Lambda_{\mathbb{F}}\times \Lambda_{\mathbb{F}} \rightarrow \mathbb{F}$
be the $q,t$-Hall scalar product on $\Lambda_{\mathbb{F}}$ given by \cite[page 306]{Mac95}
\[
\langle p_{\lambda}, p_{\mu}\rangle:=\delta_{\lambda,\mu}z_{\lambda}\prod_{i\geq 1}\frac{1-q^{\lambda_i}}{1-t^{\lambda_i}},
\]
where $z_{\lambda}:=\prod_{i\geq 1}i^{m_i}m_i!$ if $m_i$ is the number of parts $i$ in $\lambda$,
and the Kronecker symbol $\delta_{\lambda,\mu}=1$ if $\lambda=\mu$ and 0 otherwise.
The Macdonald polynomials
$P_{\lambda}=P_{\lambda}(q,t)=P_{\lambda}(X;q,t)$ are the unique symmetric functions \cite[VI, (4.7)]{Mac95} such that
\[
\langle P_{\lambda},P_{\mu}\rangle=0 \quad \text{if} \quad \lambda\neq \mu
\]
and
\begin{equation}\label{Mac-m}
P_{\lambda}=m_{\lambda}+\sum_{\mu<\lambda}u_{\lambda\mu}m_{\mu}, \quad u_{\lambda\mu}\in \mathbb{F}.
\end{equation}
Let
\[
b_{\lambda}=b_{\lambda}(q,t)=\langle P_{\lambda},P_{\lambda}\rangle^{-1},
\]
and
\begin{equation}\label{PQ}
Q_{\lambda}=b_{\lambda}P_{\lambda},
\end{equation}
so that
\[
\langle P_{\lambda}, Q_{\mu}\rangle=\delta_{\lambda \mu}
\]
for all partitions $\lambda$ and $\mu$. See \cite[VI, (4.11), (4.12)]{Mac95}.
For any three partitions $\lambda,\mu,\nu$ let
\[
f_{\mu \nu}^{\lambda}=f_{\mu \nu}^{\lambda}(q,t)=\langle Q_{\lambda},P_{\mu}P_{\nu}\rangle\in \mathbb{F}.
\]
The coefficient $f_{\mu \nu}^{\lambda}$ is called the $q,t$-Littlewood--Richardson coefficient.
Let $\lambda,\mu$ be partitions and define skew functions
$Q_{\lambda/\mu}\in \Lambda_{\mathbb{F}}$
by
\begin{equation}\label{skewQ1}
Q_{\lambda/\mu}=\sum_{\nu}f_{\mu \nu}^{\lambda}Q_{\nu}
\end{equation}
so that
\begin{equation}\label{skewQ}
\langle Q_{\lambda/\mu}, P_{\nu} \rangle=\langle Q_{\lambda}, P_{\mu}P_{\nu} \rangle.
\end{equation}
Likewise we define $P_{\lambda/\mu}$ by interchanging the $P$'s and $Q$'s in \eqref{skewQ}
\begin{equation}\label{skewP}
\langle P_{\lambda/\mu}, Q_{\nu} \rangle=\langle P_{\lambda}, Q_{\mu}Q_{\nu} \rangle.
\end{equation}
Since $Q_{\lambda}=b_{\lambda}P_{\lambda}$, it follows that
\begin{equation}\label{skewQP}
Q_{\lambda/\mu}=b_{\lambda}b_{\mu}^{-1}P_{\lambda/\mu}.
\end{equation}
See \cite[VI, (7.8)]{Mac95}.
For $u,v\in \mathbb{F}$ such that $v\neq \pm 1$, let $\omega_{u,v}$ denote the $\mathbb{F}$-algebra
endomorphism of $\Lambda_{\mathbb{F}}$ defined by
\[
\omega_{u,v}(p_{r})=(-1)^{r-1}\frac{1-u^r}{1-v^r}p_r
\]
for all $r\geq 1$, so that
\[
\omega_{u,v}(p_{\lambda})=(-1)^{|\lambda|-\ell(\lambda)}p_{\lambda}\prod_{i=1}^{\ell(\lambda)}\frac{1-u^{\lambda_i}}{1-v^{\lambda_i}}.
\]
Then we have the duality \cite[VI, (7.16)]{Mac95}
\begin{equation}\label{e-Mac1}
\omega_{q,t} P_{\lambda/\mu}(q,t)=Q_{\lambda'/\mu'}(t,q).
\end{equation}
For any alphabet $X$, the endomorphism $\omega_{q,t}$ can be interpreted plethystically.
That is
\begin{equation}\label{e-Mac2}
X\mapsto \epsilon X \frac{1-q}{t-1}.
\end{equation}
For two alphabets $X$ and $Y$,
\begin{equation}\label{e-Mac4}
P_{\lambda}[X+Y]
=\sum_{\mu\subset \lambda}P_{\lambda/\mu}[X]P_{\mu}[Y].
\end{equation}
See \cite[VI, (7.9')]{Mac95}.
If $n<\ell(\lambda)$ then
\begin{equation}\label{e-Mac5}
P_{\lambda}(x_1,\dots,x_n;q,t)=0.
\end{equation}
See \cite[VI, (4.10)]{Mac95}.
If $\lambda$ is a partition of length $n$, then
\begin{equation}\label{e-Mac7}
P_{\lambda}(x_1,\dots,x_n;q,t)=x_1\dots x_n P_{\mu}(x_1,\dots,x_n;q,t),
\end{equation}
where $\mu=(\lambda_1-1,\dots,\lambda_n-1)$.
See \cite[VI, (4.17)]{Mac95}.
For the skew functions
\begin{equation}\label{e-Mac6}
Q_{\lambda/\mu}(x_1,\dots,x_n;q,t)=0
\end{equation}
unless $0\leq \lambda'_i-\mu'_i\leq n$ for each $i\geq 1$.
See \cite[VII, (7.15)]{Mac95}.
A generalization of the $q$-factorial to partitions is given by
\[
(a;q,t)_{\lambda}:=\prod_{i\geq 1}(at^{1-i})_{\lambda_i}.
\]
The generalized hook polynomial is defined as
\[
c_{\lambda}(q,t)=\prod_{i=1}^{n}(t^{n-i+1})_{\lambda_i}\prod_{1\leq i<j\leq n}\frac{(t^{j-i})_{\lambda_i-\lambda_j}}{(t^{j-i+1})_{\lambda_i-\lambda_j}}
\]
for $n\geq \ell(\lambda)$.
Then
\begin{equation}\label{e-special}
P_{\lambda}\Big(\Big[\frac{1-a}{1-t}\Big];q,t\Big)
=\frac{t^{n(\lambda)}(a;q,t)_{\lambda}}{c_{\lambda}(q,t)}.
\end{equation}
As claimed in \cite{ARW}, this specialization is equivalent to \cite[p. 338]{Mac95}.

\begin{lem}\label{lem-Mac-1}
Let $\lambda$ and $\mu$ be partitions such that $\mu\subset \lambda$.
Then for $l$ a positive integer
\begin{equation}\label{lem-Mac-e2}
P_{\lambda/\mu}\Big(\Big[\frac{1-q}{t-1}\big(m_1+m_2+\cdots+m_l\big)\Big];q,t\Big)
=(-1)^{|\lambda|-|\mu|}Q_{\lambda'/\mu'}
\Big(\big[m_1+m_2+\cdots+m_l\big];t,q\Big),
\end{equation}
where the $m_i$ are single-letter alphabets.
\end{lem}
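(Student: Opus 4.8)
The plan is to prove Lemma~\ref{lem-Mac-1} by reducing the skew-function identity to the already-stated duality \eqref{e-Mac1} together with the plethystic interpretation of $\omega_{q,t}$ in \eqref{e-Mac2}. The key observation is that the left-hand side evaluates the skew Macdonald polynomial $P_{\lambda/\mu}(q,t)$ on the particular alphabet $X = \frac{1-q}{t-1}(m_1+\cdots+m_l)$, and this alphabet is exactly the image $\epsilon Y \frac{1-q}{t-1}$ of the alphabet $Y = \epsilon(m_1+\cdots+m_l)$, i.e.\ of $\big[-(m_1+\cdots+m_l)\big]$, under the substitution that defines $\omega_{q,t}$. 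So the strategy is to first rewrite the left-hand side as $\big(\omega_{q,t}P_{\lambda/\mu}\big)[Y]$ for an appropriate $Y$, then apply \eqref{e-Mac1} to turn $\omega_{q,t}P_{\lambda/\mu}(q,t)$ into $Q_{\lambda'/\mu'}(t,q)$, and finally simplify the resulting argument.

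Concretely, first I would set $Z = m_1+\cdots+m_l$ and observe that $\frac{1-q}{t-1}Z = \epsilon(\epsilon Z)\frac{1-q}{t-1}$, so that
\begin{equation*}
P_{\lambda/\mu}\Big[\frac{1-q}{t-1}Z\Big]
= P_{\lambda/\mu}\Big[\epsilon(\epsilon Z)\frac{1-q}{t-1}\Big]
= \big(\omega_{q,t}P_{\lambda/\mu}\big)[\epsilon Z],
\end{equation*}
where the last equality is precisely the plethystic meaning \eqref{e-Mac2} of the endomorphism $\omega_{q,t}$ applied with input alphabet $\epsilon Z$. Then by the duality \eqref{e-Mac1} one has $\omega_{q,t}P_{\lambda/\mu}(q,t) = Q_{\lambda'/\mu'}(t,q)$, so the expression equals $Q_{\lambda'/\mu'}\big([\epsilon Z];t,q\big)$. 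Since $Q_{\lambda'/\mu'}$ is a symmetric function that is homogeneous of degree $|\lambda'/\mu'| = |\lambda|-|\mu|$, the relation \eqref{e-Mac3} gives $Q_{\lambda'/\mu'}[\epsilon Z] = (-1)^{|\lambda|-|\mu|}Q_{\lambda'/\mu'}[Z]$, which is exactly the right-hand side of \eqref{lem-Mac-e2}.

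The main subtlety to check carefully is the bookkeeping of the two sign/plethystic substitutions and the order in which $\omega_{q,t}$ and the $\epsilon$ interact. The definition \eqref{e-Mac2} says $\omega_{q,t}$ corresponds to $X \mapsto \epsilon X \frac{1-q}{t-1}$, so $\big(\omega_{q,t}f\big)[X]$ agrees with $f$ evaluated at the alphabet $\epsilon X\frac{1-q}{t-1}$; applying this with $X = \epsilon Z$ produces the alphabet $\epsilon(\epsilon Z)\frac{1-q}{t-1} = \frac{1-q}{t-1}Z$, matching the left-hand side, so the identification is consistent. I would verify this at the level of power sums: writing $f = P_{\lambda/\mu}$ as a combination of $p_\nu$ and using $p_r[\epsilon X]=(-1)^r p_r[X]$ together with the definition of $\omega_{u,v}$ on $p_r$, one confirms directly that $p_r\big[\frac{1-q}{t-1}Z\big] = \frac{1-q^r}{t^r-1}p_r[Z]$ coincides with the coefficient $\omega_{q,t}$ attaches to $p_r$ evaluated on $\epsilon Z$. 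Because the whole manipulation is an identity of symmetric functions, it holds for every alphabet and in particular for the finite single-letter alphabet $Z=m_1+\cdots+m_l$; the positivity of $l$ is used only to ensure $Z$ is a genuine nonempty alphabet. This is the only genuine obstacle, and it is essentially a routine verification once the plethystic conventions of Section~\ref{sec-ple} are pinned down.
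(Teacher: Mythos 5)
Your proof is correct and is essentially the paper's own argument: both use the duality \eqref{e-Mac1}, the plethystic interpretation \eqref{e-Mac2} of $\omega_{q,t}$, and the sign rule \eqref{e-Mac3} for homogeneous functions. The only cosmetic difference is direction --- the paper starts from $Q_{\lambda'/\mu'}[Z;t,q]$ and extracts the sign from $P_{\lambda/\mu}[\epsilon\,\tfrac{1-q}{t-1}Z]$, while you start from the left-hand side and extract it from $Q_{\lambda'/\mu'}[\epsilon Z]$, which amounts to the same computation.
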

\begin{proof}
We prove by straightforward calculation.
\begin{align*}\label{lem-Mac-e1}
&Q_{\lambda'/\mu'}\Big(\big[m_1+m_2+\cdots+m_l\big];t,q\Big) \\
&\quad=\omega_{q,t}P_{\lambda/\mu}\Big(\big[m_1+m_2+\cdots+m_l\big];q,t\Big)
\quad &\text{by \eqref{e-Mac1}}\ \\
&\quad =P_{\lambda/\mu}\Big(\Big[\big(\epsilon \frac{1-q}{t-1}\big)\big(m_1+m_2+\cdots+m_l\big)\Big];q,t\Big) \quad &\text{by \eqref{e-Mac2}}\ \\
&\quad =(-1)^{|\lambda|-|\mu|}P_{\lambda/\mu}\Big(\Big[\frac{1-q}{t-1}\big(m_1+m_2+\cdots+m_l\big)\Big];q,t\Big) \quad & \text{by \eqref{e-Mac3}}.
\end{align*}
Then we obtain \eqref{lem-Mac-e2}.
\end{proof}

By Lemma~\ref{lem-Mac-1} we obtain the next vanishing property for Macdonald polynomials.
\begin{prop}\label{prop-Mac-vanish}
Let $\lambda$ be a partition and $\lambda_i$ be its nonzero part. Then
\begin{equation}\label{prop-Mac-e1}
P_{\lambda}\Big(\Big[\frac{1-q}{t-1}\big(m_1+m_2+\cdots+m_{\lambda_i-1}\big)+n_1+\cdots+n_{i-1}\Big];q,t\Big)=0,
\end{equation}
where the $m_i$ and the $n_i$ are single-letter alphabets.
\end{prop}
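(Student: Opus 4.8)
The plan is to combine the addition formula \eqref{e-Mac4} with the two vanishing criteria \eqref{e-Mac5} and \eqref{e-Mac6}, using Lemma~\ref{lem-Mac-1} as the bridge between them. First I would split the alphabet inside the plethystic bracket as $X = A + B$, where
\[
A = \frac{1-q}{t-1}\big(m_1 + \cdots + m_{\lambda_i - 1}\big), \qquad B = n_1 + \cdots + n_{i-1},
\]
and apply \eqref{e-Mac4} to obtain
\[
P_\lambda[A + B] = \sum_{\nu \subset \lambda} P_{\lambda/\nu}[A]\, P_\nu[B].
\]
It then suffices to show that each summand vanishes.

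Next I would dispose of the two ranges of $\nu$ separately. If $\ell(\nu) \geq i$, then $P_\nu[B] = P_\nu(n_1, \dots, n_{i-1}; q, t)$ is a Macdonald polynomial in only $i - 1 < \ell(\nu)$ variables, so it vanishes by \eqref{e-Mac5}. This leaves the terms with $\ell(\nu) \leq i - 1$, for which in particular $\nu_i = 0$. For these I would invoke Lemma~\ref{lem-Mac-1} (with its $\mu$ taken to be $\nu$ and $l = \lambda_i - 1$) to rewrite
\[
P_{\lambda/\nu}[A] = (-1)^{|\lambda| - |\nu|}\, Q_{\lambda'/\nu'}\big(m_1, \dots, m_{\lambda_i - 1}; t, q\big),
\]
a skew $Q$-function in $\lambda_i - 1$ variables. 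By the vanishing criterion \eqref{e-Mac6} this is zero unless $0 \leq (\lambda')'_k - (\nu')'_k \leq \lambda_i - 1$ for every $k$; since $(\lambda')' = \lambda$ and $(\nu')' = \nu$, the upper bound reads $\lambda_k - \nu_k \leq \lambda_i - 1$. Taking $k = i$ and using $\nu_i = 0$ gives $\lambda_i - \nu_i = \lambda_i > \lambda_i - 1$, violating the criterion, so this factor vanishes as well. Hence every summand is zero and the proposition follows.

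I expect the main obstacle to be the conjugate-partition bookkeeping rather than any hard analysis: one must read \eqref{e-Mac6} for the pair $(\lambda', \nu')$ and recover $\lambda, \nu$ after conjugating, after which the forced inequality in row $i$ does all the work. A minor boundary point to record is the case $\lambda_i = 1$, where Lemma~\ref{lem-Mac-1} is not directly applicable since $l = \lambda_i - 1 = 0$; there the alphabet collapses to $X = n_1 + \cdots + n_{i-1}$ and, because $\lambda_i \neq 0$ forces $\ell(\lambda) \geq i > i - 1$, the polynomial $P_\lambda(n_1, \dots, n_{i-1}; q, t)$ vanishes immediately by \eqref{e-Mac5}, consistently with the general argument.
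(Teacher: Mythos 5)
Your proof is correct and follows essentially the same route as the paper's: split the alphabet via \eqref{e-Mac4}, kill the summands with $\ell(\nu)\geq i$ by \eqref{e-Mac5}, and kill those with $\ell(\nu)<i$ by converting $P_{\lambda/\nu}$ into $Q_{\lambda'/\nu'}$ through Lemma~\ref{lem-Mac-1} and then applying \eqref{e-Mac6}, where $\nu_i=0$ forces $\lambda_i-\nu_i=\lambda_i>\lambda_i-1$. Your explicit treatment of the boundary case $\lambda_i=1$ is a small refinement the paper leaves implicit, but it changes nothing essential.
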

\begin{proof}
By \eqref{e-Mac4}
\begin{multline}\label{prop-Mac-e2}
P_{\lambda}\Big(\Big[\frac{1-q}{t-1}\big(m_1+m_2+\cdots+m_{\lambda_i-1}\big)+n_1+\cdots+n_{i-1}\Big];q,t\Big) \\
=\sum_{\mu\subset \lambda}P_{\lambda/\mu}\Big(\Big[\frac{1-q}{t-1}\big(m_1+m_2+\cdots+m_{\lambda_i-1}\big)\Big];q,t\Big)P_{\mu}\big([n_1+\cdots+n_{i-1}];q,t\big).
\end{multline}
By Lemma~\ref{lem-Mac-1}, the summand in the sum of \eqref{prop-Mac-e2} can be written as
\[
(-1)^{|\lambda|-|\mu|}Q_{\lambda'/\mu'}\Big(\big[m_1+m_2+\cdots+m_{\lambda_i-1}\big];t,q\Big)
P_{\mu}\big([n_1+\cdots+n_{i-1}];q,t\big).
\]
If $\ell(\mu)\geq i$ then $P_{\mu}\big([n_1+\cdots+n_{i-1}];q,t\big)=0$ by \eqref{e-Mac5}.
On the other hand, if $\ell(\mu)<i$ then $\mu_i=0$ and
\[
\lambda_i-\mu_i=\lambda_i>\mathrm{Car}(m_1+\cdots+m_{\lambda_i-1})=\lambda_i-1,
\]
where $\mathrm {Car}(X)$ means the cardinality of the alphabet $X$.
It follows that
\[
Q_{\lambda'/\mu'}\Big(\big[m_1+m_2+\cdots+m_{\lambda_i-1}\big];t,q\Big)=0
\]
by \eqref{e-Mac6} with $(q,t,\lambda,\mu)\mapsto (t,q,\lambda',\mu')$.
\end{proof}

Define the modified complete symmetric function $g_n(X;q,t)$ by its generating function
\begin{equation}\label{defi-g}
\prod_{i\geq 1}\frac{(tx_iy;q)_{\infty}}{(x_iy;q)_{\infty}}=\sum_{n\geq 0}g_n(X;q,t)y^n,
\end{equation}
and for any partition $\lambda=(\lambda_1,\lambda_2,\dots)$ define
\[
g_{\lambda}=g_{\lambda}(X;q,t)=\prod_{i\geq 1}g_{\lambda_i}(X;q,t).
\]
In fact $g_n$ can be written in plethystic notation
\begin{equation}\label{modi-complete}
g_n(X;q,t)=h_n\Big[\frac{1-t}{1-q}X\Big].
\end{equation}
In this paper, we only need the $t=q^c$ case of \eqref{modi-complete}. This particular case can be easily seen by comparing the generating function of $h_n$ and $g_n$ in \eqref{e-gfcomplete} and \eqref{defi-g} respectively.
For $\lambda=(r)$,
\begin{equation}\label{relation-P-g}
P_{(r)}=\frac{(q)_r}{(t)_r}g_r.
\end{equation}
See \cite[VI, (4.9)]{Mac95}.

The next result is called the Pieri formula for Macdonald polynomials \cite[VI, (6.24)]{Mac95}.
\begin{prop}\label{prop-Pieri}
Let $\mu$ be a partition. For $r$ a positive integer,
\begin{equation}
P_{\mu}g_r=\sum_{\lambda}\varphi_{\lambda/\mu}P_{\lambda},
\end{equation}
where the sum is over partitions $\lambda$ such that $\lambda/\mu$ is a horizontal $r$-strip.
\end{prop}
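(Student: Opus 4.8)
The plan is to prove only what the statement asserts — that in the expansion of $P_\mu g_r$ in the $P$-basis the coefficient $\varphi_{\lambda/\mu}$ vanishes unless $\lambda/\mu$ is a horizontal $r$-strip — rather than the explicit product formula for $\varphi_{\lambda/\mu}$ found in Macdonald's book. Since a product of symmetric functions is symmetric and $\{P_\lambda\}$ is a basis of $\Lambda_{\mathbb{F}}$, we may certainly write $P_\mu g_r=\sum_\lambda \varphi_{\lambda/\mu}P_\lambda$; because $g_r$ is homogeneous of degree $r$, only $\lambda$ with $|\lambda|=|\mu|+r$ can contribute. So the task reduces entirely to pinning down the support in $\lambda$.

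First I would extract the coefficient using the dual bases $\{P_\lambda\}$ and $\{Q_\lambda\}$. Since $\langle P_\lambda,Q_\nu\rangle=\delta_{\lambda\nu}$, one has $\varphi_{\lambda/\mu}=\langle P_\mu g_r,Q_\lambda\rangle$. Next I would turn the multiplication into a skewing operation. By \eqref{relation-P-g} the function $g_r$ is a nonzero scalar multiple of $P_{(r)}$, namely $g_r=\tfrac{(t)_r}{(q)_r}P_{(r)}$, so the defining adjunction \eqref{skewQ} for the skew functions (taken with $\nu=(r)$) gives
\begin{equation*}
\varphi_{\lambda/\mu}=\frac{(t)_r}{(q)_r}\langle Q_\lambda,P_\mu P_{(r)}\rangle=\frac{(t)_r}{(q)_r}\langle Q_{\lambda/\mu},P_{(r)}\rangle=\langle Q_{\lambda/\mu},g_r\rangle,
\end{equation*}
where in the last step I again use \eqref{relation-P-g} to trade $P_{(r)}$ back for $g_r$ (only the nonvanishing of the constant matters for the support claim).

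The decisive step is to read $\langle Q_{\lambda/\mu},g_r\rangle$ as a single-variable quantity. The generating function \eqref{defi-g} identifies $\{g_\lambda\}$ as the basis dual to the monomial basis $\{m_\lambda\}$ under the $q,t$-Hall scalar product — this is the Cauchy identity $\prod_{i,j}(tx_iy_j)_\infty/(x_iy_j)_\infty=\sum_\lambda g_\lambda(x)m_\lambda(y)=\sum_\lambda P_\lambda(x)Q_\lambda(y)$. Hence $\langle Q_{\lambda/\mu},g_r\rangle=\langle Q_{\lambda/\mu},g_{(r)}\rangle$ is exactly the coefficient of $m_{(r)}$ in the monomial expansion of $Q_{\lambda/\mu}$. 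As $Q_{\lambda/\mu}$ is homogeneous of degree $r$, setting every variable but $x_1$ to zero isolates this coefficient, so $\varphi_{\lambda/\mu}=[x_1^r]\,Q_{\lambda/\mu}(x_1;q,t)$.

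Finally I would invoke the vanishing property \eqref{e-Mac6} with $n=1$: the one-variable skew function $Q_{\lambda/\mu}(x_1;q,t)$ vanishes identically unless $0\le \lambda'_i-\mu'_i\le 1$ for every $i$, i.e. unless $\mu\subset\lambda$ and every column of $\lambda/\mu$ contains at most one box. This is precisely the condition that $\lambda/\mu$ be a horizontal strip; combined with the degree bookkeeping $|\lambda|=|\mu|+r$ it forces a horizontal $r$-strip, and $\varphi_{\lambda/\mu}=0$ otherwise. The only input beyond the tools already assembled in the excerpt is the duality between $\{g_\lambda\}$ and $\{m_\lambda\}$. The main point to flag is logical rather than computational: one must be certain that \eqref{e-Mac6} is available independently of the Pieri rule (in Macdonald's development it is) so that the argument is not circular. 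Obtaining the explicit value of $\varphi_{\lambda/\mu}$, which the statement does not require, would instead demand the full one-variable evaluation of $Q_{\lambda/\mu}$ and is the genuinely harder obstacle.
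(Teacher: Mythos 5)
Your reductions are individually correct: the coefficient extraction $\varphi_{\lambda/\mu}=\langle P_\mu g_r,Q_\lambda\rangle$, the passage via \eqref{relation-P-g} and the adjunction \eqref{skewQ} to $\varphi_{\lambda/\mu}=\langle Q_{\lambda/\mu},g_r\rangle$, the $g$--$m$ duality, and the one-variable specialization giving $\varphi_{\lambda/\mu}=[x_1^r]\,Q_{\lambda/\mu}(x_1;q,t)$ are all valid. But the final step is where the argument collapses: it is circular relative to the only available source for \eqref{e-Mac6}. In Macdonald's book the vanishing statement you invoke (VI, (7.15); the paper's citation ``VII'' is a typo) is proved from the tableau expansion of skew Macdonald functions, which is built by iterating the one-variable evaluation $Q_{\lambda/\mu}(x)=\varphi_{\lambda/\mu}x^{|\lambda/\mu|}$ for horizontal strips $\lambda/\mu$ (and $0$ otherwise) --- and that one-variable evaluation is itself deduced from the Pieri rule (6.24) by exactly the adjunction you used. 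Indeed, your own chain shows the two statements are essentially the same: writing $Q_{\lambda/\mu}(x_1)=\sum_\nu f_{\mu\nu}^{\lambda}Q_\nu(x_1)$, only $\nu=(r)$ survives in one variable, so $Q_{\lambda/\mu}(x_1)=f_{\mu(r)}^{\lambda}\,g_r(x_1)$ with $f_{\mu(r)}^{\lambda}$ a nonzero multiple of $\varphi_{\lambda/\mu}$. Thus the $n=1$ case of \eqref{e-Mac6} is not an independent input but a trivially equivalent reformulation of the proposition, and your parenthetical assurance that it is available independently of the Pieri rule ``in Macdonald's development'' is not accurate --- in that development it sits strictly downstream of (6.24).

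For comparison: the paper does not prove this proposition at all; it quotes it as \cite[VI, (6.24)]{Mac95} and explicitly declines to use the closed form of $\varphi_{\lambda/\mu}$, so there is nothing internal to the paper that could close your gap. A non-circular proof would have to reproduce Macdonald's actual argument in VI, \S 6 (which establishes (6.24) by explicit computation of the expansion coefficients, using the machinery of dual bases, the norm evaluation $\langle P_\lambda,P_\lambda\rangle$ and principal specializations, before skew functions are even defined), or else supply a genuinely independent derivation of the one-variable vanishing of $Q_{\lambda/\mu}$ outside horizontal strips. Your scoping of the problem --- proving only the support condition, which is all the statement asserts --- is the right idea, and you correctly identified the one point that needed checking; the check, however, fails.
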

The explicit expression for $\varphi_{\lambda/\mu}$ is given in \cite[VI, (6.24)]{Mac95}.
We do not need the explicit form of $\varphi_{\lambda/\mu}$ in this paper.

The symmetric functions $g_{\lambda}$ form a basis of
$\Lambda_{\mathbb{F}}$ \cite[VI, (2.19)]{Mac95}.
Then the Macdonald polynomials can be written as a linear combination of
the modified complete symmetric functions.
The formula is referred as the generalized Jacobi-Trudi expansions for Macdonald polynomials.
We state that in the next theorem.
\begin{thm}\cite{Lassalle}\label{thm-Lassalle}
Let $\lambda$ and $\mu$ be partitions with length at most $n$. Then
\begin{equation}\label{expand-P}
P_{\lambda}=\sum_{\mu\geq \lambda}c_{\mu}g_{\mu}.
\end{equation}
\end{thm}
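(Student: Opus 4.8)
The plan is to first establish the ``upper-triangular'' companion of \eqref{expand-P}, namely
\[
g_\mu=\sum_{\nu\geq \mu}e_{\mu\nu}P_\nu,\qquad e_{\mu\mu}\neq 0,
\]
and then to invert this relation. Since at each fixed degree there are only finitely many partitions and the dominance order refines to a total order, a triangular system of this shape with nonzero diagonal entries is invertible, and its inverse carries the transposed triangularity, giving $P_\lambda=\sum_{\mu\geq\lambda}c_\mu g_\mu$, which is \eqref{expand-P}. Note that $\mu\geq\lambda$ forces $\ell(\mu)\leq\ell(\lambda)\leq n$ (conjugation reverses dominance), so the length hypothesis is automatically respected and no term is killed upon restriction to $n$ variables.

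To prove the triangular relation I would use \eqref{relation-P-g} to write, for a partition $\mu=(\mu_1\geq\cdots\geq\mu_\ell)$,
\[
g_\mu=\prod_{i=1}^{\ell}g_{\mu_i}=\prod_{i=1}^{\ell}\frac{(t)_{\mu_i}}{(q)_{\mu_i}}\,P_{(\mu_i)},
\]
and then expand this product of one-row Macdonald polynomials into the $P$-basis by repeatedly applying the Pieri formula (Proposition~\ref{prop-Pieri}). The induction is on $\ell$: assuming $\prod_{i<\ell}P_{(\mu_i)}=\sum_{\sigma\geq\mu^-}d_\sigma P_\sigma$ with $d_{\mu^-}\neq 0$, where $\mu^-=(\mu_1,\dots,\mu_{\ell-1})$, I multiply by $P_{(\mu_\ell)}\propto g_{\mu_\ell}$ and use Proposition~\ref{prop-Pieri} to obtain a sum of $P_\nu$ over partitions $\nu$ with $\nu/\sigma$ a horizontal $\mu_\ell$-strip.

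The crux is the dominance bookkeeping. Given $\sigma\geq\mu^-$ and a horizontal $\mu_\ell$-strip $\nu/\sigma$, I claim $\nu\geq\mu$. Comparing partial sums: for $k\leq\ell-1$ one has $\sum_{i\leq k}\nu_i\geq\sum_{i\leq k}\sigma_i\geq\sum_{i\leq k}\mu^-_i=\sum_{i\leq k}\mu_i$, using $\nu\supseteq\sigma$ and $\sigma\geq\mu^-$; for $k\geq\ell$ I use that conjugation reverses dominance, so $\sigma\geq\mu^-$ gives $\ell(\sigma)\leq\ell(\mu^-)=\ell-1$, whence $\ell(\nu)\leq\ell(\sigma)+1\leq\ell$ and $\sum_{i\leq k}\nu_i=|\nu|=|\mu|$. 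Thus every $\nu$ that occurs satisfies $\nu\geq\mu$, which is the triangularity. For the nonvanishing of the diagonal coefficient I would show that $\nu=\mu$ can arise only from $\sigma=\mu^-$: if $\sigma\subseteq\mu$, $|\sigma|=|\mu^-|$ and $\sigma\geq\mu^-$, then the termwise bound $\sigma_i\leq\mu^-_i$ for $i\leq\ell-1$ together with dominance forces $\sigma=\mu^-$. Hence the coefficient of $P_\mu$ equals $d_{\mu^-}\varphi_{\mu/\mu^-}$ with no cancellation, and it is nonzero since $\mu/\mu^-$ is a genuine horizontal strip and the Pieri coefficients $\varphi_{\lambda/\mu}$ are nonzero over $\mathbb{F}=\mathbb{Q}(q,t)$.

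I expect the main obstacle to be exactly this last point — ruling out cancellation in the diagonal coefficient — since a priori several pairs $(\sigma,\text{strip})$ could feed into $P_\mu$; the uniqueness argument above is what makes $e_{\mu\mu}\neq 0$ safe. A slicker, essentially equivalent route avoids the Pieri induction: the $g_\mu$ are dual to the monomial symmetric functions $m_\mu$ under the $q,t$-scalar product, while $P_\lambda$ and $Q_\lambda$ are dual, so transposing the triangular relation \eqref{Mac-m} yields $g_\mu=\sum_{\nu\geq\mu}u_{\nu\mu}Q_\nu$ directly, and inverting gives \eqref{expand-P}. I would nonetheless present the Pieri argument as the main proof, since it relies only on the tools already assembled in this section.
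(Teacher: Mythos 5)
Your proposal is correct, but it is genuinely different from what the paper does: the paper gives no proof of Theorem~\ref{thm-Lassalle} at all, quoting it from Lassalle's paper on generalized Jacobi--Trudi expansions, whose point is an \emph{explicit} (and, as the paper remarks, quite complicated) formula for the coefficients $c_{\mu}$. Your argument proves only the existence of a dominance-triangular expansion, which is exactly all this paper ever uses (in Corollary~\ref{cor-poly-BC} and Proposition~\ref{prop-Mac-skew}); nothing downstream needs explicit $c_{\mu}$. Your Pieri induction checks out: the dominance bookkeeping is right, including the step $\ell(\sigma)\leq\ell(\mu^-)=\ell-1$ via conjugation reversing dominance and $\ell(\nu)\leq\ell(\sigma)+1$ for a horizontal strip; the identification of $\sigma=\mu^-$ as the unique source of $P_{\mu}$ follows from the termwise bound $\sigma_i\leq\mu_i$ (containment) against the dominance partial sums, which forces $\sigma_i=\mu^-_i$ for $i\leq\ell-1$ and then $\sigma_{\ell}=0$ by size, so the diagonal coefficient is $d_{\mu^-}\varphi_{\mu/\mu^-}$ with no cancellation. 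The one assertion you leave unproved, $\varphi_{\mu/\mu^-}\neq 0$, is true and should be cited: the explicit product formula \cite[VI, (6.24)]{Mac95} writes $\varphi$ as a product of ratios of binomial-type factors $1-q^{a}t^{l}$, none of which vanish in $\mathbb{Q}(q,t)$. The inversion step and the remark that $\mu\geq\lambda$ forces $\ell(\mu)\leq\ell(\lambda)\leq n$ are both fine. Your ``slicker route'' is also valid and is essentially the textbook derivation: it rests on the duality $\langle g_{\lambda},m_{\mu}\rangle=\delta_{\lambda\mu}$, standard in Chapter VI of \cite{Mac95} though not quoted in this paper, which together with $\langle P_{\lambda},Q_{\mu}\rangle=\delta_{\lambda\mu}$ transposes the triangularity \eqref{Mac-m} into $g_{\mu}=\sum_{\nu\geq\mu}u_{\nu\mu}Q_{\nu}$ with unit diagonal in one line. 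In short: what the paper's citation to Lassalle buys is explicit coefficients; what your proof buys is self-containedness within the toolkit already assembled in Section~\ref{sec-Mac} (Proposition~\ref{prop-Pieri}, \eqref{relation-P-g}, and \eqref{Mac-m}).
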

Note that the explicit expression for $c_{\mu}$ is quite complicated, we refer the reader to
\cite[Theorem 3]{Lassalle}. When $q=t$, \eqref{expand-P} reduces to the Jacobi-Trudi identity
\[
s_{\lambda}=\det(h_{\lambda_i-i+j})_{1\leq i,j\leq n},
\]
where $s_{\lambda}$ is the Schur function.

We can express the skew Macdonald polynomials as (usual) Madonald polynomials.
\begin{prop}\label{prop-Mac-skew}
For  partitions $\lambda$ and $\mu$ such that $\mu\subset \lambda$ and $\ell(\mu)<\ell(\lambda)$,
let
\begin{equation}\label{coeff-r}
P_{\lambda/\mu}=\sum_{\nu}r_{\mu \nu}^{\lambda}\cdot P_{\nu},
\end{equation}
where $\nu$ is over all partitions such that $|\nu|=|\lambda|-|\mu|$.
Then $r_{\mu \nu}^{\lambda}=0$ for $\nu_1<\lambda_{\ell(\mu)+1}$, or equivalently,
$\nu_1\geq \lambda_{\ell(\mu)+1}$.
\end{prop}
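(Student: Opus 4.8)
The plan is to pass to conjugate partitions via the duality involution $\omega_{q,t}$, which converts the desired lower bound on $\nu_1$ (a bound on the number of \emph{columns} of $\nu$) into a bound on the number of \emph{rows} of $\nu'$; the latter is exactly what is detected by specialising a symmetric function to finitely many variables. Write $k=\ell(\mu)$ and $m=\lambda_{k+1}$. Since $\ell(\mu)<\ell(\lambda)$ we have $k+1\le\ell(\lambda)$, so $m\ge 1$, and by definition of the length $\mu_{k+1}=0$. Applying the $\mathbb{F}$-algebra endomorphism $\omega_{q,t}$ to the expansion \eqref{coeff-r} and using the duality \eqref{e-Mac1} on the left-hand side, together with its ordinary ($\mu=\emptyset$) case $\omega_{q,t}P_{\nu}(q,t)=Q_{\nu'}(t,q)$ on each summand, I obtain
\[
Q_{\lambda'/\mu'}(t,q)=\sum_{\nu}r_{\mu\nu}^{\lambda}\,Q_{\nu'}(t,q),
\]
with the very same coefficients $r_{\mu\nu}^{\lambda}\in\mathbb{F}$ (note $\mu\subset\lambda$ gives $\mu'\subset\lambda'$, so the skew function is well defined).

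Next I would specialise both sides to the $N:=m-1$ variables $x_{1},\dots,x_{m-1}$. For every $\nu$ with $\nu_{1}\ge m$ one has $\ell(\nu')=\nu_{1}>N$, and hence $Q_{\nu'}(x_{1},\dots,x_{m-1};t,q)=0$ by \eqref{e-Mac5} (which applies to $Q$ as well, since $Q_{\nu'}=b_{\nu'}P_{\nu'}$ by \eqref{PQ}). Thus the right-hand side collapses to $\sum_{\nu:\,\nu_{1}<m}r_{\mu\nu}^{\lambda}\,Q_{\nu'}(x_{1},\dots,x_{m-1};t,q)$. The crux is then to show that the left-hand side already vanishes in these $m-1$ variables. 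This is where the skew vanishing \eqref{e-Mac6} enters: applied to $Q_{\lambda'/\mu'}$ in $m-1$ variables, it requires for non-vanishing that $0\le(\lambda')_{i}'-(\mu')_{i}'=\lambda_{i}-\mu_{i}\le m-1$ for all $i$. Taking $i=k+1$ gives $\lambda_{k+1}-\mu_{k+1}=m-0=m>m-1$, so the upper bound is violated and $Q_{\lambda'/\mu'}(x_{1},\dots,x_{m-1};t,q)\equiv 0$.

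Finally, the polynomials $Q_{\nu'}(x_{1},\dots,x_{m-1};t,q)$ indexed by distinct $\nu$ with $\nu_{1}\le m-1$ (equivalently $\ell(\nu')\le m-1$) are linearly independent, being nonzero scalar multiples of the Macdonald polynomials $P_{\nu'}(x_{1},\dots,x_{m-1};t,q)$, which form a basis of the symmetric polynomials in $m-1$ variables. Equating the vanishing left-hand side with $\sum_{\nu:\,\nu_{1}<m}r_{\mu\nu}^{\lambda}\,Q_{\nu'}$ therefore forces $r_{\mu\nu}^{\lambda}=0$ for every $\nu$ with $\nu_{1}<m=\lambda_{\ell(\mu)+1}$, which is the assertion. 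I expect the only genuinely delicate point to be the conjugation bookkeeping in applying \eqref{e-Mac6}; everything else is a direct consequence of the duality \eqref{e-Mac1} and the vanishing-in-few-variables property. (The boundary case $m=1$ is vacuous, since $|\nu|=|\lambda|-|\mu|>0$ excludes $\nu_{1}=0$.)
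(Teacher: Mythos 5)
Your proof is correct, and it takes a genuinely different route from the paper's. You conjugate the expansion \eqref{coeff-r} via the duality \eqref{e-Mac1}, so that the column bound $\nu_1\geq\lambda_{\ell(\mu)+1}$ becomes a row bound on $\nu'$, then restrict to $m-1=\lambda_{\ell(\mu)+1}-1$ variables: the left-hand side $Q_{\lambda'/\mu'}(t,q)$ dies by the skew vanishing \eqref{e-Mac6} (your bookkeeping $(\lambda')'_i-(\mu')'_i=\lambda_i-\mu_i$ with $i=\ell(\mu)+1$ is exactly right, and it is the same substitution $(q,t,\lambda,\mu)\mapsto(t,q,\lambda',\mu')$ the paper itself uses in Proposition~\ref{prop-Mac-vanish}), the terms with $\nu_1\geq m$ die by \eqref{e-Mac5} via \eqref{PQ}, and linear independence of the surviving $Q_{\nu'}$ kills the remaining coefficients; your treatment of the boundary case $m=1$ is also correct since $|\nu|=|\lambda|-|\mu|>0$. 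The paper instead stays on the scalar-product side: it rewrites $r_{\mu\nu}^{\lambda}=b_{\mu}b_{\nu}b_{\lambda}^{-1}f_{\mu\nu}^{\lambda}$, expands $P_{\mu}=\sum_{\omega\geq\mu}c_{\omega}g_{\omega}$ by Lassalle's generalized Jacobi--Trudi expansion (Theorem~\ref{thm-Lassalle}), and then applies the Pieri rule (Proposition~\ref{prop-Pieri}) $\ell(\omega)\leq\ell(\mu)$ times to see that $P_{\nu}g_{\omega}$ only produces $P_{\lambda^*}$ with $\lambda^*_{\ell(\mu)+1}\leq\nu_1<\lambda_{\ell(\mu)+1}$, hence $\langle Q_{\lambda},P_{\nu}g_{\omega}\rangle=0$. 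Your argument buys brevity and avoids both Lassalle's theorem and the Pieri formula, at the modest cost of invoking one fact not explicitly stated in the paper, namely that the Macdonald polynomials $P_{\sigma}(x_1,\dots,x_N;q,t)$ with $\ell(\sigma)\leq N$ are linearly independent (they form a basis of the symmetric polynomials in $N$ variables, standard from \cite[VI, \S 4]{Mac95}); the paper's argument buys a self-contained derivation within the $q,t$-Hall pairing and identifies precisely which $q,t$-Littlewood--Richardson coefficients $f_{\mu\nu}^{\lambda}$ vanish, which is slightly more information than the statement requires.
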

\begin{proof}
Substituting \eqref{PQ} into \eqref{skewQ1} yields
\[
Q_{\lambda/\mu}=\sum_{\nu}f_{\mu \nu}^{\lambda}Q_{\nu}
=\sum_{\nu}f_{\mu \nu}^{\lambda}b_{\nu}P_{\nu}.
\]
Together with $Q_{\lambda/\mu}=b_{\lambda}b_{\mu}^{-1}P_{\lambda/\mu}$ in \eqref{skewQP}, we have
\[
P_{\lambda/\mu}
=\sum_{\nu}f_{\mu \nu}^{\lambda}b_{\mu}b_{\lambda}^{-1}b_{\nu}P_{\nu}.
\]
Comparing this with \eqref{coeff-r}, we have
\[
r_{\mu \nu}^{\lambda}=b_{\mu}b_{\nu}b_{\lambda}^{-1}f_{\mu \nu}^{\lambda}.
\]
Hence, to prove the proposition it suffices to prove that a family of $q,t$-Littlewood--Richardson coefficients are zeros. That is
\begin{equation}\label{Mac-vanish1}
f_{\mu \nu}^{\lambda}=\langle Q_{\lambda},P_{\mu}P_{\nu}\rangle=0
\end{equation}
for $\nu_1<\lambda_{\ell(\mu)+1}$, $\mu\subset \lambda$ and $\ell(\mu)<\ell(\lambda)$.

By \eqref{expand-P}, we can write
\[
P_{\mu}=\sum_{\omega\geq \mu}c_{\omega}g_{\omega}.
\]
Using this and by the linearity of the $q,t$-Hall scalar product, we have
\begin{equation}\label{Mac-vanish2}
f_{\mu \nu}^{\lambda}=\langle Q_{\lambda},P_{\mu}P_{\nu}\rangle=\sum_{\omega\geq \mu}c_{\omega}
\langle Q_{\lambda},P_{\nu}g_{\omega}\rangle.
\end{equation}
We can get $\ell(\omega)\leq \ell(\mu)$ by $\omega\geq \mu$.
Consequently, $\ell(\omega)<\ell(\lambda)$.
Then we prove \eqref{Mac-vanish1} by showing that each term in the sum of \eqref{Mac-vanish2}
\begin{equation}\label{Mac-vanish3}
\langle Q_{\lambda},P_{\nu}g_{\omega}\rangle=0
\end{equation}
for $\nu_1<\lambda_{\ell(\mu)+1}$ and $\ell(\omega)<\ell(\lambda)$.
Repeatedly using the Pieri formulas for Macdonald polynomials (Proposition~\ref{prop-Pieri}) for
$\ell(\omega)$ times, we have
\[
P_{\nu}g_{\omega}=P_{\nu}\cdot g_{\omega_1}\cdot g_{\omega_2}\cdots
=\sum_{\lambda^*}\overline{c}_{\lambda^*}P_{\lambda^*},
\]
where $\overline{c}_{\lambda^*}$ is the coefficient for $P_{\lambda^*}$.
Since $\nu_1<\lambda_{\ell(\mu)+1}$ and $\ell(\omega)\leq \ell(\mu)<\ell(\lambda)$,
none of the partitions $\lambda^*$ can be $\lambda$.
(It needs at least $(\ell(\mu)+1)$-th use of the Pieri formula).
It follows that
\[
\langle Q_{\lambda},P_{\nu}g_{\omega}\rangle
=\sum_{\lambda^*}\overline{c}_{\lambda^*}\langle Q_{\lambda},P_{\lambda^*}\rangle=0. \qedhere
\]
\end{proof}

By Lemma~\ref{lem-positiveroots}, Theorem~\ref{thm-Lassalle} and Proposition~\ref{prop-Mac-skew},
we can obtain the next vanishing proposition for Macdonald polynomials.
\begin{prop}\label{prop-Mac-vanish-2}
Let $v=(v_1,\dots,v_n)\in \mathbb{Z}^n$, $\lambda$ and $\mu$ be partitions such that
$\mu\subset \lambda$, $\ell(\mu)<\ell(\lambda)$, $|\lambda|-|\mu|=|v|$ and
$\lambda_{\ell(\mu)+1}>\max\{v_i\mid i=1,\dots,n\}$.
Then
\begin{equation}
\CT_{x}x^{-v}P_{\lambda/\mu}(x;q,q^c)\prod_{1\leq i<j\leq n}
\Big(\frac{x_{i}}{x_{j}}\Big)_c\Big(\frac{x_{j}}{x_{i}}q\Big)_c=0.
\end{equation}
\end{prop}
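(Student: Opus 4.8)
The plan is to reduce the skew Macdonald polynomial $P_{\lambda/\mu}(x;q,q^c)$ to a linear combination of ordinary Macdonald polynomials, then further to a combination of the modified complete symmetric functions $g_{\nu}$, so that Lemma~\ref{lem-positiveroots} can be applied term by term. First I would invoke Proposition~\ref{prop-Mac-skew} to write
\[
P_{\lambda/\mu}(x;q,q^c)=\sum_{\nu}r_{\mu\nu}^{\lambda}\,P_{\nu}(x;q,q^c),
\]
where the sum runs over partitions $\nu$ with $|\nu|=|\lambda|-|\mu|=|v|$, and where the hypothesis $\mu\subset\lambda$, $\ell(\mu)<\ell(\lambda)$ guarantees via Proposition~\ref{prop-Mac-skew} that $r_{\mu\nu}^{\lambda}=0$ unless $\nu_1\geq\lambda_{\ell(\mu)+1}$. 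This is the crucial input: it transfers the size condition on $\lambda_{\ell(\mu)+1}$ into a lower bound on the largest part $\nu_1$ of every surviving $\nu$.

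Next I would expand each surviving $P_{\nu}(x;q,q^c)$ in the $g$-basis using the generalized Jacobi--Trudi expansion, Theorem~\ref{thm-Lassalle}, obtaining
\[
P_{\nu}(x;q,q^c)=\sum_{\omega\geq\nu}c_{\omega}\,g_{\omega}(x;q,q^c),
\]
where the sum is over partitions $\omega$ dominating $\nu$. Substituting both expansions, the constant term in question becomes a double sum of terms of the shape
\[
\CT_{x}\,x^{-v}\,g_{\omega}(x;q,q^c)\prod_{1\leq i<j\leq n}\big(x_i/x_j\big)_c\big(qx_j/x_i\big)_c,
\]
each weighted by $r_{\mu\nu}^{\lambda}c_{\omega}$. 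Recalling from \eqref{modi-complete} (in the case $t=q^c$) that $g_{\omega}(x;q,q^c)=h_{\omega}\big[\sum_{i=1}^n\frac{1-q^c}{1-q}x_i\big]$, each such term is exactly of the form treated in Lemma~\ref{lem-positiveroots}, with the partition there taken to be $\omega$ and with $v$ replaced by $-v$ (so the hypothesis $|\omega|=|v|$ is met since $|\omega|=|\nu|=|v|$).

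It then remains to verify the hypothesis of Lemma~\ref{lem-positiveroots}, namely $\omega_1>\max\{v_i\}$, for every pair $(\nu,\omega)$ appearing. For the surviving $\nu$ we have $\nu_1\geq\lambda_{\ell(\mu)+1}$, and since $\omega\geq\nu$ in dominance order we have $\omega_1\geq\nu_1$; combining with the standing assumption $\lambda_{\ell(\mu)+1}>\max\{v_i\}$ gives $\omega_1\geq\nu_1\geq\lambda_{\ell(\mu)+1}>\max\{v_i\}$. Thus every term vanishes by Lemma~\ref{lem-positiveroots}, and the whole constant term is zero. The only point requiring care is the chain of inequalities $\omega_1\geq\nu_1$: I would confirm that dominance $\omega\geq\nu$ indeed forces $\omega_1\geq\nu_1$, which is immediate from the very first dominance inequality $\omega_1\geq\nu_1$. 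With this the proof is complete.
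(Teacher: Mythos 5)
Your proposal is correct and follows essentially the same route as the paper's own proof: Proposition~\ref{prop-Mac-skew} to expand $P_{\lambda/\mu}$ with the lower bound $\nu_1\geq\lambda_{\ell(\mu)+1}$, then Theorem~\ref{thm-Lassalle} and \eqref{modi-complete} to reduce to constant terms of $g_{\omega}$, finished off by Lemma~\ref{lem-positiveroots} via the chain $\omega_1\geq\nu_1\geq\lambda_{\ell(\mu)+1}>\max\{v_i\}$. (One trivial remark: no replacement of $v$ by $-v$ is needed, since Lemma~\ref{lem-positiveroots} is already stated with $z^{-v}$, matching the form $x^{-v}$ here.)
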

\begin{proof}
By Proposition~\ref{prop-Mac-skew},
\[
P_{\lambda/\mu}=\sum_{\nu}r_{\mu \nu}^{\lambda}\cdot P_{\nu},
\]
where $\nu$ is over all partitions such that $|\nu|=|\lambda|-|\mu|=|v|$
and $\nu_1\geq \lambda_{\ell(\mu)+1}$.
By Theorem~\ref{thm-Lassalle}, every $P_{\nu}$ can be written as a linear combination of
$g_{\omega}$ for $|\omega|=|\nu|$ and $\omega\geq \nu$. Then, to prove the proposition,
it suffices to show that
\begin{multline}\label{e-g-vanish}
\CT_x x^{-v}g_{\omega}(x;q,q^c)\prod_{1\leq i<j\leq n}
\Big(\frac{x_{i}}{x_{j}}\Big)_c\Big(\frac{x_{j}}{x_{i}}q\Big)_c\\
=\CT_x x^{-v}h_{\omega}\Big[\sum_{i=1}^n\frac{1-q^c}{1-q}x_i\Big]\prod_{1\leq i<j\leq n}
\Big(\frac{x_{i}}{x_{j}}\Big)_c\Big(\frac{x_{j}}{x_{i}}q\Big)_c
=0
\end{multline}
for $\omega$ a partition such that $|\omega|=|v|$ and $\omega_1>\max\{v\}$.
Here the conditions for $\omega$ follows from $\omega\geq \nu$, $\nu_1\geq \lambda_{\ell(\mu)+1}$
and $\lambda_{\ell(\mu)+1}>\max\{v_i\mid i=1,\dots,n\}$.
By Lemma~\ref{lem-positiveroots}, we can conclude that
\eqref{e-g-vanish} holds.
\end{proof}

For positive integers $s$ and $n$, let $u:=(u_1,u_2,\dots,u_s)$ be a vector of integers such that $1\leq u_1<u_2<\cdots<u_s\leq n$. For a polynomial $f(x)=f(x_1,\dots,x_n)$ and $c_i\in \mathbb{Q}(q)$, we define a substitution $T_u\big(f(x)\big)$ by replacing $x_{u_i}=c_ix_{u_s}$ for $i=1,\dots,s-1$ in $f(x)$.
Then we obtain the next proposition which concerns the degrees of the Macdonald polynomials.
\begin{prop}\label{prop-Macdeg}
For a partition $\lambda$,
the degree in $x_{u_s}$ of $T_u\big(P_{\lambda}(x;q,t)\big)$ is at most $\lambda_1+\cdots+\lambda_s$.
\end{prop}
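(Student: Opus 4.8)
The plan is to reduce the statement to the monomial expansion of the Macdonald polynomial and then to an elementary inequality on exponent vectors. First I would invoke \eqref{Mac-m} to write
\[
P_{\lambda}=m_{\lambda}+\sum_{\mu<\lambda}u_{\lambda\mu}m_{\mu},
\]
so that every monomial $x^{\alpha}=x_1^{\alpha_1}\cdots x_n^{\alpha_n}$ occurring in $P_{\lambda}(x;q,t)$ has exponent vector $\alpha$ equal to a permutation of some partition $\mu$ with $|\mu|=|\lambda|$ and $\mu\le\lambda$ in the dominance order (the case $\mu=\lambda$ included).

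Next I would track how a single monomial behaves under the substitution $T_u$. Since $T_u$ sends $x_{u_i}\mapsto c_i x_{u_s}$ for $i=1,\dots,s-1$ and fixes every other variable, in particular the pivot $x_{u_s}$ itself, the image of $x^{\alpha}$ is a scalar multiple of $x_{u_s}^{\alpha_{u_1}+\cdots+\alpha_{u_{s-1}}+\alpha_{u_s}}$ times a monomial in the untouched variables. Hence the degree in $x_{u_s}$ of $T_u\big(x^{\alpha}\big)$ equals $\sum_{i=1}^s\alpha_{u_i}$, and the degree in $x_{u_s}$ of $T_u\big(P_{\lambda}\big)$ is at most the maximum of $\sum_{i=1}^s\alpha_{u_i}$ taken over all exponent vectors $\alpha$ appearing in $P_{\lambda}$. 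Cancellations can only lower this degree, so an upper bound over the individual monomials suffices.

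It then remains to prove the combinatorial inequality $\sum_{i=1}^s\alpha_{u_i}\le\lambda_1+\cdots+\lambda_s$ for every such $\alpha$, which I would carry out in two steps. Because $\alpha$ is a rearrangement of the partition $\mu$, the sum of any $s$ of its entries is at most the sum of the $s$ largest entries of $\mu$, namely $\mu_1+\cdots+\mu_s$. And because $\mu\le\lambda$ with $|\mu|=|\lambda|$, we have $\mu_1+\cdots+\mu_s\le\lambda_1+\cdots+\lambda_s$, this being precisely the defining inequality of the dominance order at index $s$. Combining the two gives $\sum_{i=1}^s\alpha_{u_i}\le\lambda_1+\cdots+\lambda_s$, as asserted.

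The argument is short and presents no genuine obstacle; the only point deserving care is the observation that the degree in $x_{u_s}$ after the substitution is exactly a sum of $s$ coordinates of the exponent vector, which couples cleanly with the dominance condition through the elementary bound \emph{sum of any $s$ entries $\le$ sum of the $s$ largest parts}. The claim holds for all $s$, including $s>\ell(\lambda)$, since the missing parts $\lambda_i$ are then zero and both inequalities still hold; likewise allowing arbitrary $c_i$, even $c_i=0$, is harmless, as a zero substitution can only decrease degrees.
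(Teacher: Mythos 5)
Your proof is correct and follows essentially the same route as the paper: expand $P_{\lambda}$ via \eqref{Mac-m} into monomial symmetric functions $m_{\mu}$ with $\mu\leq\lambda$, observe that after $T_u$ the degree in $x_{u_s}$ of a monomial is the sum of the $s$ coordinates $\alpha_{u_1},\dots,\alpha_{u_s}$ of its exponent vector, and bound this by $\mu_1+\cdots+\mu_s\leq\lambda_1+\cdots+\lambda_s$ using the dominance order. The only difference is that you spell out the elementary step (sum of any $s$ entries is at most the sum of the $s$ largest parts) that the paper dismisses as clear from the definition of $m_{\mu}$.
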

\begin{proof}
By \eqref{Mac-m}, the Macdonald polynomial $P_{\lambda}$ is a linear combination of those monomial symmetric functions $m_{\mu}$ for $\mu\leq \lambda$.
Hence, to prove the proposition, it suffices to prove that for $\mu\leq \lambda$,
the degree in $x_{u_s}$ of $T_u\big(m_{\mu}(x)\big)$ is no more than $\lambda_1+\cdots+\lambda_s$.
By the definition of $m_{\mu}$, it is clear that
this degree is no more than $\mu_1+\cdots+\mu_s$,
which is no more than $\lambda_1+\cdots+\lambda_s$ by
$\mu\leq \lambda$.
\end{proof}

\section{Polynomiality and rationality}\label{sec-poly}

In this section, we complete the Steps (1)-(2)
in the outline of the proof in the introduction.

We begin with the next polynomiality lemma.
\begin{lem}\label{lem1}
Let $L(x_1,\dots,x_n)$ be an arbitrary Laurent polynomial independent of $a$ and $x_0$.
Then, for an integer $t$ such that $t\leq nb$, the constant term
\begin{equation}\label{p1}
\CT_x x_0^{t} L(x_1,\dots,x_n) \prod_{i=1}^n (x_0/x_i)_a(qx_i/x_0)_b
\end{equation}
is a polynomial in $q^a$ of degree at most $nb-t$.
Moreover, if $t>nb$, then the constant term \eqref{p1} vanishes.
\end{lem}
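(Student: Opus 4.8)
The plan is to prove both claims—the polynomiality with the stated degree bound, and the vanishing when $t>nb$—by directly expanding the $q$-shifted factorials involving $a$ into a finite sum of monomials in $q^a$, then extracting the constant term of a Laurent polynomial that is manifestly polynomial in $q^a$. The key observation is that although $a$ appears in $(x_0/x_i)_a$, for each fixed nonnegative integer $a$ this is a genuine Laurent polynomial in the $x_i$'s, and the $q^a$-dependence can be isolated.

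\medskip

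First I would rewrite the factor $(x_0/x_i)_a$ in a way that separates the $a$-dependence. The clean route is to use the $q$-binomial theorem: for a nonnegative integer $a$,
\[
(x_0/x_i)_a=\sum_{k_i=0}^{a}\qbinom{a}{k_i}(-1)^{k_i}q^{\binom{k_i}{2}}(x_0/x_i)^{k_i}.
\]
Here the only $a$-dependence sits inside $\qbinom{a}{k_i}=(q^{a-k_i+1})_{k_i}/(q)_{k_i}$, which, for fixed $k_i$, is a polynomial in $q^a$ of degree exactly $k_i$. Meanwhile $(qx_i/x_0)_b$ is already independent of $a$ and contributes a Laurent polynomial in $x_i/x_0$ of degrees ranging from $0$ down to $-b$ (i.e.\ monomials $(x_i/x_0)^{j_i}$ with $0\le j_i\le b$). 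Substituting both expansions into \eqref{p1} and collecting the power of $x_0$, the integrand becomes a finite sum of terms, each of the form (polynomial in $q^a$) times a monomial $x_0^{\,t+\sum_i k_i-\sum_i j_i}$ times a Laurent polynomial in $x_1,\dots,x_n$ coming from $L$ and the $(qx_i/x_0)_b$ factors.

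\medskip

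The decisive step is then homogeneity in $x_0$. Because the whole integrand is homogeneous of degree $0$ once we account for all variables, taking $\CT_{x_0}$ selects exactly those terms whose total $x_0$-exponent vanishes. For a surviving term we need $t+\sum_i k_i-\sum_i j_i=0$, hence $\sum_i k_i=\sum_i j_i-t\le nb-t$, using $0\le j_i\le b$. Each surviving term then carries a factor $\prod_i\qbinom{a}{k_i}$, a polynomial in $q^a$ of degree $\sum_i k_i\le nb-t$; after taking the remaining constant terms in $x_1,\dots,x_n$ (which only strip off the $x_i$-dependence and produce coefficients in $K$ independent of $a$), the result is a polynomial in $q^a$ of degree at most $nb-t$. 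This proves the degree bound. For the vanishing claim when $t>nb$, the constraint $\sum_i k_i=\sum_i j_i-t$ forces a nonnegative left side to equal a quantity bounded above by $nb-t<0$, which is impossible; no term survives, so the constant term is $0$.

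\medskip

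I expect the main obstacle to be purely bookkeeping rather than conceptual: one must justify that the constant-term operator in the iterated Laurent series field commutes with the finite sums introduced by the two expansions, and that ``taking $\CT_{x_1,\dots,x_n}$'' of each term yields a \emph{single} well-defined element of $K$ independent of $q^a$. The first point is immediate because the expansions are finite (no convergence issue), and $\CT$ is $K$-linear. The second point holds because, after fixing the $k_i$ and $j_i$, the remaining factor $L(x_1,\dots,x_n)\prod_i(\text{monomial in }x_i)$ is a fixed Laurent polynomial in $x_1,\dots,x_n$ with coefficients in $K$, entirely free of $a$; its constant term is a scalar in $K$. Thus every coefficient of every power of $q^a$ lies in $K$, and the total $q^a$-degree is controlled as above, completing both parts of the lemma.
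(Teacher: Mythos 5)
Your proof is correct: expanding each $(x_0/x_i)_a$ by the $q$-binomial theorem, noting that $\qbinom{a}{k_i}$ is a polynomial in $q^a$ of degree exactly $k_i$ (and vanishes for $0\leq a<k_i$, which makes the resulting finite formula valid uniformly in the nonnegative integer $a$), and matching $x_0$-exponents yields both the degree bound $nb-t$ and the vanishing for $t>nb$. This is essentially the paper's approach, since the paper simply defers to the proof of \cite[Lemma 2.2]{XZ}, which rests on the same $q$-binomial expansion and $x_0$-exponent bookkeeping; your passing appeal to ``homogeneity of the integrand'' is superfluous (and not literally true for arbitrary $L$), but the operative step you actually use --- extracting the coefficient of $x_0^0$ from a Laurent polynomial in $x_0$ --- is exactly right.
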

\begin{proof}
The proof is almost the same as that of \cite[Lemma 2.2]{XZ}.
\end{proof}
By Lemma~\ref{lem1}, it is easy to conclude that $A_n(a,b,c,\lambda,\mu)$ is a polynomial in $q^a$.
\begin{cor}\label{cor-poly-BC}
The constant term $A_n(a,b,c,\lambda,\mu)$ is a polynomial in $q^a$ of degree at most $nb+|\lambda|+|\mu|$, assuming all the parameters but $a$ are fixed.
\end{cor}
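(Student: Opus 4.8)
The plan is to reduce the claim about $A_n(a,b,c,\lambda,\mu)$ to the polynomiality lemma just established (Lemma~\ref{lem1}). Recall from \eqref{Defi-A} that
\[
A_n(a,b,c,\lambda,\mu)=\CT_x x_0^{-|\lambda|-|\mu|}P_{\lambda}(x;q,q^c)
P_{\mu}\Big(\Big[\tfrac{q^{c-b-1}-q^a}{1-q^c}x_0+\textstyle\sum_{i=1}^nx_i\Big];q,q^c\Big)
\prod_{i=1}^{n}\Big(\tfrac{x_{0}}{x_{i}}\Big)_a
\Big(\tfrac{qx_{i}}{x_{0}}\Big)_b\prod_{1\leq i<j\leq n}
\Big(\tfrac{x_{i}}{x_{j}}\Big)_c\Big(\tfrac{x_{j}}{x_{i}}q\Big)_c.
\]
The obstacle to applying Lemma~\ref{lem1} directly is that the integrand is not of the form $x_0^{t}L(x_1,\dots,x_n)$ times the $(x_0/x_i)_a(qx_i/x_0)_b$ product: the argument $a$ appears inside the Macdonald polynomial $P_\mu$ through the coefficient $\tfrac{q^{c-b-1}-q^a}{1-q^c}$ multiplying $x_0$. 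First I would dispose of this by expanding $P_\mu$ so that the $a$-dependence is isolated and seen to be polynomial in $q^a$.

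The key step is to expand $P_{\mu}$ using the addition formula \eqref{e-Mac4}, separating the alphabet into the single letter $\tfrac{q^{c-b-1}-q^a}{1-q^c}x_0$ and the alphabet $\sum_{i=1}^n x_i$. This gives
\[
P_{\mu}\Big(\Big[\tfrac{q^{c-b-1}-q^a}{1-q^c}x_0+\textstyle\sum_ix_i\Big];q,q^c\Big)
=\sum_{\nu\subset\mu}P_{\mu/\nu}\Big(\Big[\tfrac{q^{c-b-1}-q^a}{1-q^c}x_0\Big];q,q^c\Big)
P_{\nu}\big([\textstyle\sum_ix_i];q,q^c\big).
\]
Since $P_{\mu/\nu}$ is homogeneous of degree $|\mu|-|\nu|$, by \eqref{e-homo-sym} the first factor equals $\big(\tfrac{q^{c-b-1}-q^a}{1-q^c}\big)^{|\mu|-|\nu|}x_0^{|\mu|-|\nu|}P_{\mu/\nu}([1];q,q^c)$, which is a scalar multiple of $x_0^{|\mu|-|\nu|}$ and, crucially, a polynomial in $q^a$ of degree exactly $|\mu|-|\nu|\le|\mu|$. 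The second factor $P_{\nu}([\sum_ix_i];q,q^c)=P_\nu(x_1,\dots,x_n;q,q^c)$ together with $P_\lambda(x;q,q^c)$ is a Laurent polynomial $L(x_1,\dots,x_n)$ independent of $a$ and $x_0$.

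Collecting the pieces, $A_n(a,b,c,\lambda,\mu)$ becomes a finite sum, over $\nu\subset\mu$, of terms of the shape
\[
(\text{polynomial in }q^a\text{ of degree }\le|\mu|)\times
\CT_x x_0^{\,|\mu|-|\nu|-|\lambda|-|\mu|}\,L(x_1,\dots,x_n)\prod_{i=1}^n(x_0/x_i)_a(qx_i/x_0)_b,
\]
where the power of $x_0$ is $t=-|\nu|-|\lambda|$. Here $t\le nb$ holds trivially since $t\le 0\le nb$, so Lemma~\ref{lem1} applies and each constant term is a polynomial in $q^a$ of degree at most $nb-t=nb+|\lambda|+|\nu|$. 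Multiplying by the prefactor of degree $\le|\mu|-|\nu|$ in $q^a$ gives a polynomial in $q^a$ of degree at most $nb+|\lambda|+|\nu|+(|\mu|-|\nu|)=nb+|\lambda|+|\mu|$. Summing over the finitely many $\nu\subset\mu$ preserves polynomiality and the degree bound, yielding the claim. The main subtlety, rather than any hard estimate, is simply the bookkeeping: verifying that the $x_0$-degree stays within the hypothesis $t\le nb$ of Lemma~\ref{lem1} and tracking how the degree in $q^a$ from the homogeneity prefactor combines with the degree bound from the lemma so that the two contributions of $|\nu|$ cancel to leave the uniform bound $nb+|\lambda|+|\mu|$.
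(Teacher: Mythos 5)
Your overall strategy --- expand $P_\mu$ via \eqref{e-Mac4} to isolate the $a$-dependence, then feed the $a$-free Laurent polynomial into Lemma~\ref{lem1} and add up the two degree contributions --- is the same as the paper's, and your bookkeeping of the $x_0$-exponent ($t=-|\lambda|-|\nu|\leq nb$) and of the final degree bound is fine. But there is a genuine gap at the central step: you invoke \eqref{e-homo-sym} to write
\[
P_{\mu/\nu}\Big(\Big[\tfrac{q^{c-b-1}-q^a}{1-q^c}x_0\Big];q,q^c\Big)
=\Big(\tfrac{q^{c-b-1}-q^a}{1-q^c}\Big)^{|\mu|-|\nu|}x_0^{|\mu|-|\nu|}\,P_{\mu/\nu}([1];q,q^c),
\]
and this is false. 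The rule $f[aX]=a^kf[X]$ in \eqref{e-homo-sym} is valid only for a \emph{single-letter} alphabet $a$, and $\frac{q^{c-b-1}-q^a}{1-q^c}$ is not a letter: by \eqref{asm} and \eqref{division} it is the alphabet with
\[
p_r\Big[\tfrac{q^{c-b-1}-q^a}{1-q^c}\,x_0\Big]=\frac{q^{r(c-b-1)}-q^{ra}}{1-q^{rc}}\,x_0^r
\neq\Big(\tfrac{q^{c-b-1}-q^a}{1-q^c}\Big)^{r}x_0^r \quad (r\geq 2).
\]
(Even reading the fraction as a binomial element $k\in\mathbb{F}$ would give $p_\lambda[kX]=k^{\ell(\lambda)}p_\lambda[X]$, so a homogeneous $f$ of degree $d$ picks up $k^{\ell(\lambda)}$ term by term, not a uniform $k^d$; and the intended reading is in any case the alphabet one, as the evaluation \eqref{e-special} on the right-hand side of \eqref{AFLT} shows.) Only the genuine letter $x_0$ can be pulled out, leaving $x_0^{|\mu|-|\nu|}P_{\mu/\nu}\big(\big[\frac{q^{c-b-1}-q^a}{1-q^c}\big];q,q^c\big)$, and the assertion that this coefficient is a polynomial in $q^a$ of degree at most $|\mu|-|\nu|$ is precisely the nontrivial content of the corollary: your argument assumes it rather than proves it.

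This is exactly where the paper does its real work. It applies \eqref{e-Mac4} with the roles of the two alphabets swapped, so that the $a$-dependent alphabet lands in a straight Macdonald polynomial $P_\nu$, and then proves the needed degree bound by expanding $P_\nu=\sum_{\lambda\geq\nu}c_\lambda g_\lambda$ (Theorem~\ref{thm-Lassalle}), rewriting $g_\lambda$ via \eqref{modi-complete}, and using $h_r[(1-z)/(1-q)]=(z)_r/(q)_r$ to evaluate the result as $q^{(c-b-1)|\lambda|}\prod_{i\geq1}(q^{a+b+1-c})_{\lambda_i}/(q)_{\lambda_i}$, visibly a polynomial in $q^a$ of degree at most $|\nu|$. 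To repair your route you could either mirror this swap (then the $x_0$-exponent is $-|\lambda|-|\mu|+|\nu|$ and Lemma~\ref{lem1} gives degree at most $nb+|\lambda|+|\mu|-|\nu|$, combining with the $|\nu|$ from $P_\nu$ as before), or keep your skew factor but first expand $P_{\mu/\nu}$ as a finite $\mathbb{F}$-linear combination of straight $P_\rho$ with $|\rho|=|\mu|-|\nu|$, as in \eqref{coeff-r}, and apply the same $g$-expansion to each $P_\rho$. With either fix, the rest of your computation goes through verbatim.
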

\begin{proof}
By \eqref{e-Mac4} and \eqref{e-homo-sym}, we can write $A_n(a,b,c,\lambda,\mu)$ as
\begin{equation}\label{poly-B}
\sum_{\nu\subset \mu}P_{\nu}\big(\big[\frac{q^{c-b-1}-q^a}{1-q^c}\big];q,q^c\big)
\CT_xx_0^{-|\lambda|-|\mu|+|\nu|}
\prod_{i=1}^n (x_0/x_i)_a(qx_i/x_0)_b
\times L(x),
\end{equation}
where
\[
L(x)=
P_{\mu/\nu}(x;q,q^c)P_{\lambda}(x;q,q^c)
\prod_{1\leq i<j\leq n}\Big(\frac{x_{i}}{x_{j}}\Big)_c\Big(\frac{x_{j}}{x_{i}}q\Big)_c.
\]
We write
\begin{align*}
P_{\nu}\big(\big[\frac{q^{c-b-1}-q^a}{1-q^c}\big];q,q^c\big)
&=\sum_{\lambda\geq \nu}c_{\lambda}g_{\lambda}\Big(\Big[\frac{q^{c-b-1}-q^a}{1-q^c}\Big];q,q^c\Big)
\quad &\text{by \eqref{expand-P}}\\
&=\sum_{\lambda\geq \nu}c_{\lambda}h_{\lambda}\Big[\frac{q^{c-b-1}-q^a}{1-q}\Big]
\quad &\text{by \eqref{modi-complete}}\\
&=\sum_{\lambda\geq \nu}c_{\lambda}q^{(c-b-1)|\lambda|}h_{\lambda}\Big[\frac{1-q^{a+b+1-c}}{1-q}\Big]
\quad &\text{by \eqref{e-homo-sym}}\\
&=\sum_{\lambda\geq \nu}c_{\lambda}q^{(c-b-1)|\lambda|}
\prod_{i\geq 1}\frac{(q^{a+b+1-c})_{\lambda_i}}{(q)_{\lambda_i}}.
\end{align*}
The right-most equality holds by $h_r[(1-z)/(1-q)]=(z)_r/(q)_r$, see e.g., \cite[page 27]{Mac95}.
Then one can see that $P_{\nu}\big[\frac{q^{c-b-1}-q^a}{1-q^c}\big]$
is a polynomial in $q^a$ of degree at most $|\nu|$. Together with the fact that
the constant term in \eqref{poly-B} is a polynomial in $q^a$ of degree at most $nb+|\lambda|+|\mu|-|\nu|$
by Lemma~\ref{lem1}, we conclude that $A_n(a,b,c,\lambda,\mu)$ is a polynomial in $q^a$ of degree at most $nb+|\lambda|+|\mu|$.
\end{proof}

The following rationality result, which is implicitly due to
Stembridge \cite{stembridge1987}, as can be seen from the proof. One can also see this result in
\cite[Proposition~3.1]{XZ} and \cite[Lemma~7.5]{KNPV}.
The $q=1$ case of this result
is the equal parameter case of \cite[Proposition 2.4]{Gessel-Lv-Xin-Zhou2008}.
\begin{prop}\label{prop-rationality}
Let $\alpha=(\alpha_1,\dots,\alpha_n)\in \mathbb{Z}^{n}$ such that $|\alpha|=0$. Then
\begin{align}
\CT_{x} x^{\alpha}\prod_{1\leq i<j\leq n}
\Big(\frac{x_{i}}{x_{j}}\Big)_{c}\Big(\frac{x_{j}}{x_{i}}q\Big)_{c}=\frac{(q)_{nc}}{(q)_{c}^{n}}\cdot
R_n(q^c;q,\alpha),
\end{align}
where $R_n(q^c;q,\alpha)$ is a rational function in $q^c$ and $q$.
\end{prop}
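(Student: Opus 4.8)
The plan is to reduce the statement to the rationality of a single Macdonald expansion coefficient. First, the case $\alpha=0$ is the $a=b=0$ instance of the Habsieger--Kadell identity \eqref{q-Morris}, whose right-hand side telescopes to $\CT_x\prod_{1\le i<j\le n}(x_i/x_j)_c(qx_j/x_i)_c=(q)_{nc}/(q)_c^n$. This value is itself not rational in $q^c$ (already for $n=2$ it is the $q$-binomial $(q)_{2c}/(q)_c^2$), so the real content of the proposition is that the ratio of the general constant term to this one is rational in $q^c$. Since the weight is symmetric and the constant term is unchanged under permuting the variables, I would replace $x^\alpha$ by its symmetrization $\bar m_\alpha:=\frac1{n!}\sum_{w\in\mathfrak S_n}x^{w\alpha}$. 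Applying Proposition~\ref{prop-equiv} both with $f=\bar m_\alpha$ and with $f=1$, the common prefactor $\frac1{n!}\prod_{i=1}^{n-1}(1-q^{(i+1)c})/(1-q^c)$ cancels in the ratio, leaving $\CT_x x^\alpha W/\CT_x W=\CT_x\bar m_\alpha\Delta/\CT_x\Delta$, where $W=\prod_{i<j}(x_i/x_j)_c(qx_j/x_i)_c$ and $\Delta:=\prod_{1\le i\ne j\le n}(x_i/x_j)_c$.

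The key observation is that, by $(z)_c=(z)_\infty/(q^c z)_\infty$, the symmetric weight $\Delta$ is exactly the $A_{n-1}$ Macdonald density $\prod_{i\ne j}(x_i/x_j)_\infty/(t\,x_i/x_j)_\infty$ at $t=q^c$. Expanding $\bar m_\alpha=\sum_\mu c_\mu(q,t)P_\mu(x;q,t)$ in the Macdonald basis and invoking orthogonality of the $P_\mu$ with respect to the constant-term scalar product $\langle f,g\rangle'_n=\frac1{n!}\CT_x f(x)g(x^{-1})\Delta(x)$, every $P_\mu$ with $\mu\ne(0^n)$ satisfies $\CT_x P_\mu\Delta=n!\langle P_\mu,1\rangle'_n=0$. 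Hence $\CT_x\bar m_\alpha\Delta=c_{(0^n)}\CT_x\Delta$, so the ratio above is simply $c_{(0^n)}$, and $R_n(q^c;q,\alpha)=c_{(0^n)}(q,q^c)$. Because the transition between the monomial and Macdonald bases is triangular with entries in $\mathbb{Q}(q,t)$, the coefficient $c_{(0^n)}$ is a rational function of $(q,t)=(q,q^c)$, which is the assertion.

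The main obstacle is bookkeeping at the boundary of the theory rather than a genuinely new idea. Since $|\alpha|=0$, $\bar m_\alpha$ is a symmetric Laurent polynomial of degree $0$, so one must work with the Laurent extension of Macdonald polynomials --- those indexed by weakly decreasing integer sequences $\mu\in\mathbb{Z}^n$, obtained from the partition-indexed ones by powers of $e_n=x_1\cdots x_n$ --- and check that orthogonality and triangularity persist there, with $P_{(0^n)}=1$. One must also justify the specialization $t=q^c$: the identity holds for generic $t$, and since $q^c$ (with $0<q<1$) avoids the roots-of-unity loci where the $c_\mu(q,t)$ could have poles, it remains valid for every positive integer $c$. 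I would finally note that the route matching this paper's toolkit is the elementary Gessel--Xin/Stembridge computation of $\CT_x x^\alpha W$ directly by iterated residues, using Cai's splitting (Proposition~\ref{prop-split}); there the individual contributions carry $q$-factorial denominators such as $(q^{-j})_j(q)_{c-j-1}$ as in \eqref{A}, and the delicate point is to show that these assemble, after summation over the splitting, into $(q)_{nc}/(q)_c^n$ times a rational function of $q^c$.
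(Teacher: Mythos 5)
Your reduction collapses at the first step. The $q$-Morris weight $W:=\prod_{1\leq i<j\leq n}(x_i/x_j)_c(qx_j/x_i)_c$ is \emph{not} symmetric in $x_1,\dots,x_n$ --- only $\prod_{1\leq i\neq j\leq n}(x_i/x_j)_c$ is, which is exactly why Proposition~\ref{prop-equiv} carries the hypothesis that $f$ be invariant and produces the nontrivial factor $\frac{1}{n!}\prod_{i=1}^{n-1}\frac{1-q^{(i+1)c}}{1-q^c}$. Consequently $\CT_x x^{\alpha}W$ depends on $\alpha$ itself and not merely on its $\mathfrak{S}_n$-orbit, and the substitution $x^{\alpha}\mapsto \bar m_{\alpha}$ changes the value. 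Concretely, for $n=2$ and $c=1$ one has $W=1+q-x_1/x_2-qx_2/x_1$, so $\CT_x (x_1/x_2)\,W=-q$ while $\CT_x (x_2/x_1)\,W=-1$; your symmetrized quantity $-(1+q)/2$ determines neither. The Macdonald-orthogonality computation itself is essentially sound for what it computes --- granting the Laurent extension $P_{\mu+(k^n)}=e_n^{k}P_{\mu}$, and with more care at the specialization $t=q^c$ (a denominator factor of a fixed rational coefficient, say $t^b-q^a$, can vanish identically under $t=q^c$ for an unlucky integer $c$, so ``avoiding roots of unity'' is not the right justification) --- but it only yields rationality of the average $\frac{1}{n!}\sum_{w\in\mathfrak{S}_n}\CT_x x^{w\alpha}W$, i.e.\ the proposition with symmetric $f$ in place of $x^{\alpha}$. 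That weaker statement cannot feed Corollary~\ref{cor-rationality}, where $H$ is an arbitrary non-symmetric homogeneous Laurent polynomial of degree $0$ and the proposition is invoked monomial by monomial.

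For comparison: the paper does not prove this proposition internally but cites \cite{stembridge1987}, \cite[Proposition~3.1]{XZ} and \cite[Lemma~7.5]{KNPV}, and those arguments work directly with the single monomial $x^{\alpha}$ by Laurent-series/partial-fraction bookkeeping of the kind you gesture at in your closing remark --- precisely to avoid the symmetrization you attempted. If you want to salvage your structural route, the correct tool is the nonsymmetric theory: at $t=q^c$ the weight $W$ is the $A_{n-1}$ Cherednik density, the nonsymmetric Macdonald polynomials $E_{\beta}(x;q,t)$ are orthogonal with respect to the pairing $(f,g)\mapsto\CT_x f(x)\,g(x_1^{-1},\dots,x_n^{-1})\,W$, and expanding $x^{\alpha}$ triangularly in the $E_{\beta}$ with coefficients in $\mathbb{Q}(q,t)$ exhibits $\CT_x x^{\alpha}W$ as the coefficient of $E_0=1$ times $\CT_x W$, giving the claim for each individual $\alpha$. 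Otherwise you must actually carry out the elementary monomial-wise computation deferred to in your final paragraph; as written, that paragraph is a pointer to a proof, not a proof.
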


By Proposition~\ref{prop-rationality} we have the next
straightforward consequence.
\begin{cor}\label{cor-rationality}
Let $d$ be a nonnegative integer independent of $c$ and $H$ be a homogeneous Laurent polynomial in $x_0,x_1,\dots,x_n$ of degree 0. If an expression for
\[
\CT_x H\prod_{1\leq i<j\leq n}
\Big(\frac{x_{i}}{x_{j}}\Big)_{c}\Big(\frac{x_{j}}{x_{i}}q\Big)_{c}
\]
holds for $c\geq d$, then it holds for all nonnegative integers $c$.
\end{cor}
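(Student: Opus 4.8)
The statement to prove is Corollary~\ref{cor-rationality}: if a closed-form expression for the constant term
\[
F(c):=\CT_x H\prod_{1\leq i<j\leq n}\Big(\frac{x_i}{x_j}\Big)_c\Big(\frac{x_j}{x_i}q\Big)_c
\]
holds for all integers $c\geq d$, then it holds for every nonnegative integer $c$.

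The plan is to deduce this from the rationality statement of Proposition~\ref{prop-rationality} by a standard ``agreement of rational functions'' argument, treating $q^c$ as a formal variable. First I would reduce to the homogeneous degree-zero setting: since $H$ is a homogeneous Laurent polynomial in $x_0,x_1,\dots,x_n$ of degree $0$, I would set $x_0=1$ (which does not change the constant term, exactly as noted after \eqref{q-Morris}) so that $H$ becomes a Laurent polynomial in $x_1,\dots,x_n$. Expanding $H$ into monomials $x^{\alpha}$ and using that the full integrand is symmetric-type in its denominator factors, only the terms with $|\alpha|=0$ contribute, because the remaining factor $\prod_{i<j}(x_i/x_j)_c(qx_j/x_i)_c$ is homogeneous of degree $0$ and a nonzero net degree forces a vanishing constant term. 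Thus $F(c)$ is a finite $K$-linear combination of the constant terms $\CT_x\,x^{\alpha}\prod_{i<j}(x_i/x_j)_c(qx_j/x_i)_c$ with $|\alpha|=0$.

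Next I would invoke Proposition~\ref{prop-rationality}, which expresses each such constant term as $\frac{(q)_{nc}}{(q)_c^n}R_n(q^c;q,\alpha)$ with $R_n$ a rational function of $q^c$ and $q$. Summing over the finitely many $\alpha$ appearing in $H$, I conclude that
\[
F(c)=\frac{(q)_{nc}}{(q)_c^{\,n}}\,R(q^c;q),
\]
where $R$ is again a rational function of $q^c$ (with coefficients in $K=\CC(q)$), since a finite $\CC(q)$-linear combination of rational functions of $q^c$ is rational in $q^c$. The proposed closed form on the right-hand side of the hypothesised identity is likewise, after clearing the prefactor, a rational function $S(q^c;q)$ of $q^c$: it is built from $q$-shifted factorials $(q^{\ast})_{\ast}$ whose arguments are integer-linear in $c$, and every such $q$-factorial is a ratio of shifted infinite products, hence a rational function of the single quantity $q^c$. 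Therefore both sides of the claimed identity, viewed as functions of the formal variable $Q:=q^c$, are rational functions $R(Q;q)$ and $S(Q;q)$ over $K$.

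Finally I would use the hypothesis that $R(q^c;q)=S(q^c;q)$ holds for every integer $c\geq d$. The set $\{q^c : c\in\ZZ,\ c\geq d\}$ is an infinite subset of $K$ (distinct for distinct $c$ since $0<q<1$, or formally since $q$ is transcendental over $\CC$), so the rational functions $R(Q;q)$ and $S(Q;q)$ in $Q$ agree at infinitely many points of $K$; a rational function is determined by its values on any infinite set, hence $R(Q;q)=S(Q;q)$ identically as rational functions of $Q$. Specialising $Q=q^c$ at any remaining nonnegative integer $c<d$ then gives the identity for that $c$ as well, completing the proof. The only point requiring care — and the main obstacle — is confirming that specialisation at $Q=q^c$ is legitimate for all nonnegative $c$, i.e.\ that no spurious pole of the common rational function $R(Q;q)=S(Q;q)$ lands at $Q=q^c$; this is handled by the prefactor $(q)_{nc}/(q)_c^{\,n}$ in Proposition~\ref{prop-rationality}, which is a genuine (finite, nonzero) constant for each fixed nonnegative integer $c$, so that the representation of $F(c)$ remains finite and the two sides may be evaluated termwise.
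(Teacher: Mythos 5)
Your proposal is correct and takes essentially the same route as the paper: invoke Proposition~\ref{prop-rationality}, normalize by the prefactor $(q)_{nc}/(q)_c^n$, and conclude that two rational functions of $q^c$ agreeing at the infinitely many distinct points $q^c$, $c\geq d$, coincide identically and hence at every nonnegative integer $c$; your preliminary expansion of $H$ into monomials $x^{\alpha}$ with $|\alpha|=0$ is exactly the linearity step the paper leaves implicit in writing $R_n(q^c;q,H)$. One inaccuracy worth flagging: your justification that ``every such $q$-factorial is a ratio of shifted infinite products, hence a rational function of the single quantity $q^c$'' is false when the length of the factorial depends on $c$ --- for instance $(q)_{ic}=(q)_{\infty}/(q^{ic+1})_{\infty}$ is an infinite product in $q^c$, not a rational function of it, and the same is true of the prefactor itself. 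What is actually true, and what the corollary implicitly requires of the candidate ``expression'', is that the right-hand side divided by $(q)_{nc}/(q)_c^n$ is rational in $q^c$ after the $c$-dependent-length factorials telescope against the prefactor; the paper's own proof asserts this without verification as well, so your argument is at the same level of rigor and its overall structure stands.
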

\begin{proof}
By Proposition~\ref{prop-rationality}, it exists a rational function $R_n(q^c;q,H)$ such that
\[
\CT_x H\prod_{1\leq i<j\leq n}
\Big(\frac{x_{i}}{x_{j}}\Big)_{c}\Big(\frac{x_{j}}{x_{i}}q\Big)_{c}
=\frac{(q)_{nc}}{(q)_{c}^{n}}\cdot R_n(q^c;q,H).
\]
Then
\begin{equation}\label{rational-p1}
\frac{(q)_c^n}{(q)_{nc}}\CT_x H\prod_{1\leq i<j\leq n}
\Big(\frac{x_{i}}{x_{j}}\Big)_{c}\Big(\frac{x_{j}}{x_{i}}q\Big)_{c}
=R_n(q^c;q,H).
\end{equation}
It follows that if \eqref{rational-p1} holds for $c\geq d$ for some nonnegative integer $d$,
then it holds for all nonnegative integers $c$ since the both sides are rational functions in $q^c$.
\end{proof}

\section{Preliminaries for determination of the roots}\label{sec-prepare}

In this section, we present several results that are essential to determine the roots of
$A_{n}(a,b,c,\lambda,\mu)$.

\subsection{The key Lemmas}\label{subsec-1}

In this section, we obtain two key lemmas. The first one --- Lemma~\ref{cor-key} blew
--- is to discuss the distribution of some integers.
This kind of discussion in order to evaluate some constant terms may originate from
\cite[Lemma 4.2]{GX}. Each time when this lemma was extended,  some further (maybe harder) constant terms can be evaluated. See \cite{LXZ, Zhou} in dealing with the $q$-Dyson style constant terms for example. Lemma~\ref{cor-key} is a general extension of \cite[Lemma~4.2]{XZ}.
The $t=1$ case of Lemma~\ref{cor-key} corresponds to \cite[Lemma~4.2]{XZ}.
We will introduce the second key lemma blew.
\begin{lem}\label{cor-key}
For $s$ a positive integer, let $k_1,\dots,k_s$ and $t$ be nonnegative integers such that $1\leq k_{i}\leq (s-1)c+b+t$ for $1\leq i\leq s$.
Then, at least one of the following holds:
\begin{enumerate}
\item $1\leq k_i\leq b$ for some $i$ with $1\leq i\leq s$;
\item $-c\leq k_i-k_j\leq c-1$ for some $(i,j)$ such that $1\leq i<j\leq s$;
\item there exists a permutation $w\in\mathfrak{S}_s$ and nonnegative integers $t_1,\dots,t_s$
such that
\begin{subequations}\label{e-k1}
\begin{equation}
k_{w(1)}=b+t_1,
\end{equation}
and
\begin{equation}
k_{w(j)}-k_{w(j-1)}=c+t_j \quad \text{for $2\leq j\leq s$.}
\end{equation}
\end{subequations}
Here the $t_j$ satisfy
\begin{equation}\label{e-range-t}
1\leq \sum_{j=1}^{s}t_j\leq t,
\end{equation}
$w(0):=0$, and $t_j>0$ if $w(j-1)<w(j)$ for $1\leq j\leq s$.
In particular, if $t=1$ then
\begin{equation}\label{cor-key-1}
k_i=(s-i)c+b+1
\end{equation}
for $i=1,\dots,s$.
\end{enumerate}
\end{lem}
\begin{proof}
We prove the lemma by showing that if (1) and (2) fail then (3) must hold.

Assume that (1) and (2) are both false.
Then we construct a weighted tournament $T$ on the complete graph
on $s$ vertices, labelled $1,\dots,s$, as follows.
For the edge $(i,j)$ with $1\leq i<j\leq s$, we draw an arrow from $j$ to $i$ and attach a weight $c$ if $k_i-k_j\ge c$.
If, on the other hand, $k_i-k_j\le -c-1$ then we draw an arrow
from $i$ to $j$ and attach the weight $c+1$.
Note that the weight of each edge of a tournament is nonnegative.

We call a directed edge from $i$ to $j$ ascending if $i<j$.
It is immediate from our construction that
(i) the weight of the edge $i\to j$ is less than or equal $k_j-k_i$, and
(ii) the weight of an ascending edge is positive.

We will use (i) and (ii) to show that any of the above-constructed
tournaments is acyclic and hence transitive.
As consequence of (i), the weight of a directed path from $i$ to $j$ in $T$,
defined as the sum of the weights of its edges, is at most $k_j-k_i$.
Proceeding by contradiction, assume that $T$ contains a cycle $C$.
By the above, the weight of $C$ must be non-positive, and hence $0$.
Since $C$ must have at least one ascending edge, which by (ii) has positive
weight, the weight of $C$ is positive, a contradiction.

Since each $T$ is transitive, there is exactly one directed Hamilton path $P$ in $T$,
corresponding to a total order of the vertices.
Assume $P$ is given by
\[
P=w(1)\rightarrow w(2)\rightarrow\cdots\rightarrow w(s-1)\rightarrow w(s),
\]
where we have suppressed the edge weights.
Then
\[
k_{w(s)}-k_{w(1)}\ge (s-1)c,
\]
and thus
\begin{equation}\label{e-contradiction}
k_{w(s)}\ge k_{w(1)}+(s-1)c \ge b+1+(s-1)c.
\end{equation}
Together with the assumption that $k_{w(s)}\leq (s-1)c+b+t$
this implies that $P$ has at most $t-1$ ascending edges.
Let $t_1,\dots,t_s$ be nonnegative integers such that \eqref{e-k1} holds.
When $j=1$ this gives $k_{w(1)}=b+t_1$.
Since (1) does not hold, $k_{w(1)}\geq b+1$, so that $t_1>0$.
For $2\leq j\leq s$, if $w(j-1)\to w(j)$ is an ascending edge, then $t_j$ is a positive integer.
That is, for $2\leq j\leq s$ if $w(j-1)<w(j)$ then $t_j>0$.
Set $k_0:=0$.
Since
\begin{equation*}
\sum_{j=1}^s (k_{w(j)}-k_{w(j-1)})=k_{w(s)}
=b+(s-1)c+\sum_{j=1}^s t_j \leq b+(s-1)c+t,
\end{equation*}
we have $\sum_{j=1}^s t_j\leq t$.
Together with the fact that $t_1>0$ yields \eqref{e-range-t}.
This completes the proof of the assertion that (3) must hold if
both (1) and (2) fail.

If $t=1$, then none of the edges of $P$ can be ascending. That is $P=s\rightarrow s-1\rightarrow\cdots\rightarrow 1$.
Correspondingly, $k_i=(s-i)c+b+1$ for $i=1,\dots,s$.
\end{proof}

Lemma~\ref{cor-subs} allows us to combine the plethystic substitutions in symmetric functions with
the theory of iterated Laurent series. This discovery first appeared in \cite{Zhou}.
\begin{lem}\label{cor-subs}
For $s,t$ positive integers, let $k_1,\dots,k_s$ be nonnegative integers such that
$1\leq k_i\leq (s-1)c+b+t$ for $1\leq i\leq s$.
If the $k_i$ are such that (3) of Lemma~\ref{cor-key} holds, then
\begin{equation}\label{e-subs-2}
-\frac{q^{c-b-1}-q^a}{1-q}x_0-\sum_{i=1}^{s}\frac{1-q^{c}}{1-q}x_i
\bigg|_{\substack{-a=(s-1)c+b+t, \\[1pt] x_i=q^{k_s-k_i},\,0\leq i\leq s}}
=q^{n_1}+\dots+q^{n_{t-1}},
\end{equation}
where $\{n_1,\dots,n_{t-1}\}$ is a set of integers determined by $s,b,c$ and the $k_i$,
and we set $k_0:=0$.
In particular, set $q^{n_1}+\dots+q^{n_{t-1}}|_{t=1}:=0$.
\end{lem}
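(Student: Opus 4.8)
The plan is to perform the indicated substitution, recognize the resulting element of $\CC(q)$ as a signed sum of integer intervals of exponents of $q$, and then use the interval arithmetic encoded in part (3) of Lemma~\ref{cor-key} to show that all but $t-1$ of the powers of $q$ cancel. The first observation is that the left-hand side of \eqref{e-subs-2} is exactly $-\frac{1-q^c}{1-q}Y_s$, where $Y_s:=\frac{q^{c-b-1}-q^a}{1-q^c}x_0+\sum_{i=1}^s x_i$ is (with $n\mapsto s$) the alphabet inside the Macdonald polynomial in \eqref{Defi-A}; this is immediate from $\frac{1-q^c}{1-q}\cdot\frac{q^{c-b-1}-q^a}{1-q^c}=\frac{q^{c-b-1}-q^a}{1-q}$. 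Writing $\frac{1-q^c}{1-q}=1+q+\cdots+q^{c-1}$ and substituting $x_0=q^{k_s}$, $x_i=q^{k_s-k_i}$, $q^a=q^{-(s-1)c-b-t}$, each summand collapses to a block of consecutive powers of $q$. A short geometric-series computation gives
\[
-\frac{q^{c-b-1}-q^a}{1-q}\,x_0\Big|_{\mathrm{subs}}=\sum_{m=0}^{sc+t-2}q^{\,k_s-(s-1)c-b-t+m},
\]
a single block of $sc+t-1$ consecutive powers, while the sum contributes the negative of $s$ blocks $\sum_{i=1}^s\sum_{m=0}^{c-1}q^{\,k_s-k_i+m}$, each of length $c$. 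Hence the quantity in \eqref{e-subs-2} is governed by the signed exponent-count $f(m)=\mathbf{1}[m\in I_+]-\sum_{i=1}^s\mathbf{1}[m\in I_i]$, where $I_+$ is the long block and the $I_i$ are the short blocks, and the goal reduces to showing that $f(m)=1$ at exactly $t-1$ integers and $f(m)=0$ otherwise.

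Next I would locate the short blocks using part (3) of Lemma~\ref{cor-key}. Setting $T_j:=\sum_{l=1}^j t_l$, the relations $k_{w(1)}=b+t_1$ and $k_{w(j)}-k_{w(j-1)}=c+t_j$ telescope to $k_{w(j)}=b+(j-1)c+T_j$, so each $I_{w(j)}=[\,k_s-k_{w(j)},\,k_s-k_{w(j)}+c-1\,]$ has explicitly computable endpoints. Comparing the left end of $I_{w(j)}$ with the right end of $I_{w(j+1)}$ shows their difference is exactly $t_{j+1}+1$; therefore the short blocks are pairwise disjoint, arranged in order of increasing exponent as $j$ decreases, and separated by gaps of $t_{j+1}$ integers. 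I would then verify that the union of the short blocks together with its gaps is contained in $I_+$, that $I_+$ overhangs it on the left by $t-S$ integers (using $S:=T_s\le t$) and on the right by $t_1-1$ integers (using $t_1\ge 1$, which holds because the failure of (1) forces $k_{w(1)}\ge b+1$).

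Finally I would tally the integers with $f(m)=1$: the left overhang ($t-S$ of them), the gaps between consecutive short blocks ($\sum_{j=2}^s t_j=S-t_1$ of them), and the right overhang ($t_1-1$ of them), while every exponent covered by a short block cancels against $I_+$ and every exponent outside $I_+$ contributes nothing. The total is $(t-S)+(S-t_1)+(t_1-1)=t-1$, so the expression equals $q^{n_1}+\cdots+q^{n_{t-1}}$ with the $n_i$ equal to $k_s$ plus these $t-1$ surviving exponents, which are manifestly determined by $s,b,c$ and the $k_i$; when $t=1$ no exponents survive, matching the stated convention. The main obstacle is precisely the endpoint bookkeeping of the middle paragraph: one must confirm that the short blocks are genuinely disjoint (so each contributes $-1$ only once) and that $\bigcup_j I_{w(j)}\subseteq I_+$, both of which hinge on the exact inequalities $t_j\ge 0$, $t_1\ge 1$, and $S\le t$ furnished by Lemma~\ref{cor-key}(3).
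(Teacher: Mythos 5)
Your proof is correct, but it is organized differently from the paper's. After the common substitution step, the paper never expands anything into individual powers of $q$: it regroups the bracketed expression telescopically, using the permutation $w$, into $s+1$ manifestly nonnegative geometric blocks $q^{c-k_{w(1)}}\bigl(1-q^{k_{w(1)}-b-1}\bigr)$, $q^{-(s-1)c-b-t}\bigl(1-q^{(s-1)c+b+t-k_{w(s)}}\bigr)$ and $q^{c-k_{w(i)}}\bigl(1-q^{-c+k_{w(i)}-k_{w(i-1)}}\bigr)$ for $2\leq i\leq s$, checks via (3) of Lemma~\ref{cor-key} that each exponent $k_{w(1)}-b-1$, $(s-1)c+b+t-k_{w(s)}$, $-c+k_{w(i)}-k_{w(i-1)}$ lies in $\mathbb{N}$, and then counts the total number of surviving powers by a telescoping sum equal to $t-1$. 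You instead keep the original signed decomposition (one long positive block of length $sc+t-1$ against $s$ negative blocks of length $c$) and run an interval-covering argument, locating the short blocks explicitly via $k_{w(j)}=b+(j-1)c+T_j$ and showing the uncancelled exponents are exactly the left overhang ($t-S$ of them), the inter-block gaps ($t_{j+1}$ each), and the right overhang ($t_1-1$), totalling $t-1$. The two arguments are dual: your overhangs and gaps are precisely the paper's three families of telescoped blocks, with the same lengths $t-S$, $t_i$, $t_1-1$, and both rest on the same inequalities $t_1\geq 1$, $t_j\geq 0$, $S\leq t$ and $k_{w(s)}\leq (s-1)c+b+t$. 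What your version buys is that the distinctness of the $n_i$ and their explicit values (asserted in the paper only in the remark following the lemma) fall out for free from the disjointness of the intervals; what the paper's version buys is brevity, since the regrouping avoids any disjointness bookkeeping. One cosmetic slip: since your intervals $I_+$ and $I_{w(j)}$ are already defined as sets of actual exponents containing the shift $k_s$, the final phrase ``$n_i$ equal to $k_s$ plus these surviving exponents'' double-counts $k_s$; the $n_i$ are the surviving exponents themselves. This does not affect the count.
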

We remark that the set $\{n_1,\dots,n_{t}\}$ ($n_i\neq n_j$ for $1\le i<j\le t$) can be explicitly determined.
However, since the precise values of the $n_i$ are irrelavent in the following, we
have omitted them from the above statement.
Indeed, the important fact about the right-hand side is that, viewed as an
alphabet, has cardinality $t-1$.
\begin{proof}
Denote the left-hand side of \eqref{e-subs-2} by $L$.
Carrying out the substitutions
\[
a\mapsto -(s-1)c-b-t\quad\text{and}\quad  x_i\mapsto q^{k_s-k_i} \text{ for $0\leq i\leq s$}
\]
in
\[
-\frac{q^{c-b-1}-q^a}{1-q}x_0-\sum_{i=1}^{s}\frac{1-q^{c}}{1-q}x_i,
\]
we obtain
\begin{align}
L&=
\frac{q^{k_s}}{1-q}\bigg(q^{-(s-1)c-b-t}-q^{c-b-1}
-\sum_{i=1}^s\big(1-q^c\big)q^{-k_i}\bigg)\nonumber \\
&=\frac{q^{k_s}}{1-q}\bigg(q^{-(s-1)c-b-t}-q^{c-b-1}
-\sum_{i=1}^s(1-q^{c})q^{-k_{w(i)}}\bigg), \label{e-prop-subs1}
\end{align}
where $w\in \mathfrak{S}_s$ is any permutation such that \eqref{e-k1} holds.
By rearranging the terms in \eqref{e-prop-subs1}, it can be written as
\begin{multline*}
L=\frac{q^{k_s}}{1-q}\bigg(q^{c-k_{w(1)}}\big(1-q^{k_{w(1)}-b-1}\big)
+q^{-(s-1)c-b-t}\big(1-q^{(s-1)c+b+t-k_{w(s)}}\big)\\
+\sum_{i=2}^{s}q^{c-k_{w(i)}}\big(1-q^{-c+k_{w(i)}-k_{w(i-1)}}\big)
\bigg).
\end{multline*}
By \eqref{e-k1} and $k_i\leq (s-1)c+b+t$, we have
\begin{subequations}\label{s}
\begin{equation}\label{s1}
k_{w(1)}-b-1\in\mathbb{N},
\end{equation}
\begin{equation}\label{s2}
(s-1)c+b+t-k_{w(s)}\in\mathbb{N},
\end{equation}
and
\begin{equation}\label{s3}
-c+k_{w(i)}-k_{w(i-1)}\in\mathbb{N} \quad\text{for $i=2,3,\dots,s$},
\end{equation}
\end{subequations}
where $\mathbb{N}=\{0,1,2,\dots\}$.
Since $(1-q^n)/(1-q)=1+\dots+q^{n-1}$ for $n$ a positive integer and is 0 for $n=0$, we may conclude
that $L=q^{n_1}+\dots+q^{n_p}$,
where $p$ is given by
\begin{equation*}
p=k_{w(1)}-b-1+(s-1)c+b+t-k_{w(s)}+\sum_{i=2}^s\big(-c+k_{w(i)}-k_{w(i-1)}\big)=t-1.
\end{equation*}
This completes the proof.
\end{proof}

\subsection{The rational function $Q(d\Mid u;k)$}\label{subsec-2}
In this section, we discuss a kind of vanishing and recursive properties of the rational function $Q(d\Mid u;k)$ defined in \eqref{eq-Qrk} below.

Let
\begin{multline}\label{def-Q3}
Q(d)=x_0^{-|\lambda|-|\mu|}P_{\lambda}(x;q,q^c)
P_{\mu}\Big(\Big[\frac{q^{c-b-1}-q^{-d}}{1-q^c}x_0+\sum_{i=1}^nx_i\Big];q,q^c\Big)\\
\times\prod_{i=1}^{n}\frac{(qx_{i}/x_{0})_b}{(q^{-d}x_{0}/x_{i})_d}
\prod_{1\leq i<j\leq n}
\Big(\frac{x_{i}}{x_{j}}\Big)_c\Big(\frac{x_{j}}{x_{i}}q\Big)_c.
\end{multline}
It is clear that
\[
\CT_x Q(-a)=A_{n}(a,b,c,\lambda,\mu).
\]
Note that the above equation holds for all integers $a$ by Corollary~\ref{cor-poly-BC}.

For any rational function $F$ of $x_0, x_1, \dots, x_n$ and $s$ an integer such that $1\leq s\leq n$, and for
sequences of integers $k = (k_1, k_2, \dots, k_s)$ and $u = (u_1,
u_2, \dots, u_s)$ let $E_{u,k}F$ be the result of
replacing $x_{u_i}$ in $F$ with $x_{u_s}q^{k_s-k_i}$ for $i = 0,
1,\dots, s-1$, where we set $u_0 = k_0 = 0$. Then for $0 < u_1 <
u_2 <\dots < u_s \leq n$ and $1\leq k_i\leq d$, we define
\begin{equation}\label{eq-Qrk}
Q(d\Mid u;k):=Q(d\Mid u_1,\dots,u_s;k_1,\dots,k_s)=E_{u,k}
\bigg(Q(d)\prod_{i=1}^{s}(1-\frac{x_{0}}{x_{u_{i}}q^{k_{i}}})\bigg).
\end{equation}
Set $Q(d\Mid u;k)|_{s=0}:=Q(d)$.
Note that the product on the right hand side of \eqref{eq-Qrk}
cancels all the factors in the denominator of $Q$ that would be
taken to zero by $E_{u,k}$.

The numerator of $Q(d)$ ---
\[
x_0^{-|\lambda|-|\mu|}P_{\lambda}(x;q,q^c)
P_{\mu}\Big(\Big[\frac{q^{c-b-1}-q^{-d}}{1-q^c}x_0+\sum_{i=1}^nx_i\Big];q,q^c\Big)\\
\times\prod_{i=1}^{n}(qx_{i}/x_{0})_b
\prod_{1\leq i<j\leq n}
\Big(\frac{x_{i}}{x_{j}}\Big)_c\Big(\frac{x_{j}}{x_{i}}q\Big)_c
\]
is a Laurent polynomial in $x_0$ with degree $-|\lambda|$ at most.
The denominator of $Q(d)$ ---
\[
\prod_{i=1}^{n}\big(q^{-d}x_{0}/x_{i}\big)_{d}
\]
is of the form
\[
\prod_{r=1}^{nd} (1-c_r x_0/ x_{i_r})
\]
with degree $nd$ in $x_0$, where all the $i_r\neq 0$, and $c_r\neq c_v$ if $i_r=i_v$.
Thus, for $d$ a positive integer $Q(d)$ is a rational function of the form
\eqref{e-defF} with respect to $x_0$. Then we can apply Lemma~\ref{lem-prop}
to $Q(d)$ with respect to $x_0$ and obtain
\begin{equation}\label{Q3-1}
\CT_{x_0}Q(d)=\sum_{\substack{1\leq k_1\leq d\\1\leq u_1\leq n}}
Q(d\Mid u_1;k_1),
\end{equation}
where $Q(d\Mid u_1;k_1)$ is defined in \eqref{eq-Qrk} with $s=1$.
We can further apply Lemma~\ref{lem-prop} to each $Q(d\Mid u_1;k_1)$ with respect to $x_{u_1}$ if applicable, and get a sum.
Continue this operation until Lemma~\ref{lem-prop} does not apply to every summand. In other words, every summand can not be written as a sum by Lemma~\ref{lem-prop}. Finally we write
\begin{equation}\label{Q3-sum}
\CT_{x}Q(d)=\sum_{s\in T\subseteq \{1,\dots,n\}}\sum_{\substack{1\leq u_1<\cdots<u_s\leq n\\1\leq k_1,\dots,k_s\leq d}}
\CT_xQ(d\Mid u_1,\dots,u_s;k_1,\dots,k_s).
\end{equation}
We call this operation the Gessel--Xin operation to the rational function $Q(d)$, since it first appeared in \cite{GX}.

The following vanishing and recursive properties of $Q(d\Mid u;k)$ is crucial in the proof of our main Theorem.
\begin{prop}\label{cor-Q}
For $s$ an integer such that $1\leq s\leq n$, the rational functions $Q(d\Mid u;k)$ have the following properties:
\begin{enumerate}
\item If $d\leq (s-1)c+b$, then $Q(d\Mid u;k)=0$;

\item If $d>sc$ and $s\neq n$, then
\begin{equation}\label{cor-Q-e1}
\CT_{x_{u_s}}Q(d\Mid u;k)=
\begin{cases}\displaystyle
\sum_{\substack{u_s<u_{s+1}\leq n\\1\leq k_{s+1}\leq d}}
(d\Mid u_1,\dots,u_s,u_{s+1};k_1,\dots,k_s,k_{s+1}) \quad &\text{for $u_s<n$,}\\
0 \quad &\text{for $u_s=n$;}
\end{cases}
\end{equation}
\end{enumerate}
\end{prop}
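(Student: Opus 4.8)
The statement splits into an unconditional vanishing assertion (1) and a recursive identity (2), and I would prove them by different means: (1) rests on the combinatorial trichotomy of Lemma~\ref{cor-key}, while (2) is one further application of the residue formula Lemma~\ref{lem-prop}, now in the variable $x_{u_s}$. Throughout I use that $E_{u,k}$ sends $x_0\mapsto q^{k_s}x_{u_s}$ and $x_{u_i}\mapsto q^{k_s-k_i}x_{u_s}$, so that $x_{u_i}/x_0\mapsto q^{-k_i}$ and $x_{u_i}/x_{u_j}\mapsto q^{k_j-k_i}$, and that after the cancellation built into \eqref{eq-Qrk} the function $Q(d\Mid u;k)$ equals $p(x_{u_s})/D$, where $D=\prod_{j\notin\{0,u_1,\dots,u_s\}}(q^{-d+k_s}x_{u_s}/x_j)_d$ has $m=(n-s)d$ factors and all remaining denominator factors are nonzero $x_{u_s}$-free constants. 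The factors of the numerator of $Q(d)$ are then Laurent polynomials in $x_{u_s}$ and are collected into $p$.

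For part (1), the constraint $1\le k_i\le d$ together with $d\le(s-1)c+b$ gives $1\le k_i\le(s-1)c+b$, so Lemma~\ref{cor-key} applies with $t=0$. Its alternative (3) demands $\sum_j t_j\ge 1$, which is impossible when $t=0$; hence alternative (1) or (2) holds. If $1\le k_i\le b$ for some $i$, then the numerator factor $(qx_{u_i}/x_0)_b$ becomes $(q^{1-k_i})_b=0$ under $E_{u,k}$; if $-c\le k_i-k_j\le c-1$ for some $i<j$, then the pair factor $(x_{u_i}/x_{u_j})_c(qx_{u_j}/x_{u_i})_c$ becomes a product of $q$-shifted factorials one of whose factors vanishes (the first vanishes for $0\le k_i-k_j\le c-1$, the second for $-c\le k_i-k_j\le -1$). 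In either case the numerator is identically zero while $D$ has constant term $1$, so $Q(d\Mid u;k)=0$.

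For part (2), the heart of the matter is to verify the degree hypothesis of Lemma~\ref{lem-prop}, namely $\deg_{x_{u_s}}p\le m-1=(n-s)d-1$. I would bound the top $x_{u_s}$-degree of $p$ factor by factor: the prefactor $x_0^{-|\lambda|-|\mu|}$ contributes $-|\lambda|-|\mu|$; Proposition~\ref{prop-Macdeg} bounds the contribution of $P_\lambda$ by $\lambda_1+\cdots+\lambda_s$, and homogeneity bounds that of $P_\mu[\cdots]$ by $|\mu|$; the product $\prod_i(qx_i/x_0)_b$ contributes $0$ (each surviving factor is a polynomial in $x_{u_s}^{-1}$); and in $\prod_{i<j}(x_i/x_j)_c(qx_j/x_i)_c$ only the $s(n-s)$ pairs with exactly one index in $\{u_1,\dots,u_s\}$ involve $x_{u_s}$, each contributing top degree $c$. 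Summing, the top degree is at most $(\lambda_1+\cdots+\lambda_s)-|\lambda|+s(n-s)c\le s(n-s)c$, and since $d>sc$ and $s\ne n$ we have $(n-s)d-1\ge s(n-s)c+(n-s)-1\ge s(n-s)c$; this is exactly where the hypotheses $d>sc$ and $s\ne n$ are used.

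With the degree bound secured, Lemma~\ref{lem-prop} expresses $\CT_{x_{u_s}}Q(d\Mid u;k)$ as the sum of residues at the poles $x_{u_s}=c_r^{-1}x_{i_r}$ with $i_r>u_s$. These poles are exactly the $d$ factors of $(q^{-d+k_s}x_{u_s}/x_j)_d$ for each $j>u_s$; setting $u_{s+1}=j$ and reading off the pole location $x_{u_s}=q^{k_{s+1}-k_s}x_{u_{s+1}}$ gives $k_{s+1}=1,\dots,d$. A direct check shows that composing $E_{u,k}$ with the evaluation $x_{u_s}=q^{k_{s+1}-k_s}x_{u_{s+1}}$ reproduces $E_{(u,u_{s+1}),(k,k_{s+1})}$, while the pole-cancelling factor supplied by Lemma~\ref{lem-prop} matches the extra factor $(1-x_0/(x_{u_{s+1}}q^{k_{s+1}}))$ in \eqref{eq-Qrk}; hence each residue equals $Q(d\Mid u_1,\dots,u_{s+1};k_1,\dots,k_{s+1})$, which is the claimed recursion. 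When $u_s=n$ there is no index $j>u_s$, so the sum is empty and the constant term vanishes. I expect the main obstacle to be the degree estimate of part (2)---keeping honest track of the top $x_{u_s}$-degree of the two Macdonald factors against the $q$-factorial products---rather than the residue bookkeeping, which follows the established Gessel--Xin pattern.
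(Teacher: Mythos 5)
Your proposal is correct and follows essentially the same route as the paper: part (1) via Lemma~\ref{cor-key} with $t=0$ (case (3) being impossible) and the vanishing factors $(q^{1-k_i})_b$ or the pair factor, and part (2) by bounding the $x_{u_s}$-degree of the numerator by $s(n-s)c$ (using Proposition~\ref{prop-Macdeg} for $P_\lambda$ and homogeneity for $P_\mu$) against the denominator degree $(n-s)d$, then applying Lemma~\ref{lem-prop}. The only cosmetic differences are that you verify the pair-factor vanishing by treating $(q^{k_j-k_i})_c$ and $(q^{1+k_i-k_j})_c$ separately where the paper combines them into $(q^{k_j-k_i-c})_{2c}$, and that you spell out the residue bookkeeping (the compatibility of $E_{u,k}$ with the evaluation $x_{u_s}=q^{k_{s+1}-k_s}x_{u_{s+1}}$) which the paper leaves implicit.
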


\begin{proof}
(1)
If $d\leq (s-1)c+b$, then
since $1\leq k_i\leq d$ for $i=1,\dots,s$ we have
\[
1\leq k_i\leq (s-1)c+b \quad \text{for $i=1,\dots,s$}.
\]
By Lemma~\ref{cor-key} with the $t=0$ case,
its case (3) can not occur. Then either $1\leq k_i\leq b$ for some $i$ with $1\leq i\leq s$,
or $-c\leq k_i-k_j\leq c-1$ for some $(i,j)$ such that $1\leq i<j\leq s$.

If $1\leq k_i\leq b$ for some $i$, then $Q(d\Mid u;k)$ has the factor
\[
E_{u,k}\Big[\big(qx_{u_i}/x_0\big)_{b}\Big]=\big(q^{1-k_i}\big)_{b}=0.
\]

If $-c\leq k_i-k_j\leq c-1$ for some $(i,j)$, then $Q(d\Mid u;k)$ has the factor
\[
E_{u,k}\Big[\big(x_{u_i}/x_{u_j}\big)_c\big(qx_{u_j}/x_{u_i}\big)_c\Big],
\]
which is equal to
\[
E_{u,k}\Big[q^{\binom{c+1}{2}}(-x_{u_j}/x_{u_i})^c\big(q^{-c}x_{u_i}/x_{u_j}\big)_{2c}\Big]
=q^{\binom{c+1}{2}}(-q^{k_i-k_j})^c\big(q^{k_j-k_i-c}\big)_{2c}=0.
\]

(2)
We first show that $Q(d\Mid u;k)$ is of the form \eqref{e-defF} for $d>sc$.

Denote $U=\{u_1,u_2,\dots,u_s\}$. By the expression for $Q(d\Mid u;k)$ in \eqref{eq-Qrk}, the parts contribute to the degree in $x_{u_s}$
of the numerator of  $Q(d\Mid u;k)$ is
\begin{multline*}
E_{u,k}\bigg(x_0^{-|\lambda|-|\mu|}P_{\lambda}(x;q,q^c)
P_{\mu}\Big(\Big[\frac{q^{c-b-1}-q^{-d}}{1-q^c}x_0+\sum_{i=1}^nx_i\Big];q,q^c\Big)\bigg)\\
\times \prod_{\substack{i=1\\ i\notin U}}^n\prod_{j=1}^s
\big(q^{k_s-k_j+\chi(u_j>i)}x_{u_s}/x_i\big)_c,
\end{multline*}
which has degree at most $s(n-s)c$.
The parts contribute to the degree in $x_{u_s}$ of the denominator of $Q(d\Mid u;k)$ is
\[
\prod_{\substack{i=1\\ i\notin U}}^n
\big(q^{k_s-d}x_{u_s}/x_i\big)_d,
\]
which has degree $(n-s)d$.
If $d>sc$ then $Q(d\Mid u;k)$ is of the form \eqref{e-defF}.
Applying Lemma~\ref{lem-prop} gives
\[
\CT_{x_{u_s}}Q(d\Mid u;k)=
\begin{cases}\displaystyle
\sum_{\substack{u_s<u_{s+1}\leq n\\1\leq k_{s+1}\leq d}}
Q(d\Mid u_1,\dots,u_s,u_{s+1};k_1,\dots,k_s,k_{s+1}) \quad &\text{for $u_s<n$,}\\
0 \quad &\text{for $u_s=n$.}
\end{cases}
\]
This completes the proof.
\end{proof}

\subsection{The Laurent polynomiality of $Q(d\Mid u;k)$}\label{subsec-3}

In this section, we find that the rational function $Q(d\Mid u;k)$ can be written as
a Laurent polynomial under certain conditions.

We can write $Q(d\Mid u;k)$ as
\begin{equation}\label{defi-Q}
Q(d\Mid u;k)=H\times
\prod_{i=1}^s(q^{k_i-d})^{-1}_{d-k_i}(q)^{-1}_{k_i-1}
\times V \times L\\
\times\prod_{\substack{1\leq i<j\leq n\\i,j\notin U}}\big(x_i/x_j\big)_{c}
\big(qx_j/x_i\big)_{c},
\end{equation}
where
\[
H=E_{u,k}\bigg(x_{0}^{-|\lambda|-|\mu|}\times P_{\lambda}(x;q,q^c)
P_{\mu}\Big(\Big[\frac{q^{c-b-1}-q^{-d}}{1-q^c}x_0+\sum_{i=1}^nx_i\Big];q,q^c\Big)\bigg),
\]
\[
V=\prod_{i=1}^s(q^{1-k_i})_{b}\prod_{1\leq i<j\leq s}(q^{k_j-k_i})_c(q^{k_i-k_j+1})_c,
\]
and
\begin{equation}\label{e-defi-L}
L=\prod_{\substack{i=1\\ i\notin U}}^n
\frac{(q^{1-k_s}x_i/x_{u_s})_{b}}{\big(q^{k_s-d}x_{u_s}/x_i\big)_d}
\prod_{\substack{i=1\\ i\notin U}}^n\prod_{j=1}^s
\big(q^{k_j-k_s+\chi(i>u_j)}x_i/x_{u_s}\big)_c
\big(q^{k_s-k_j+\chi(u_j>i)}x_{u_s}/x_i\big)_c.
\end{equation}

\begin{prop}\label{cor-Laurent}
Let $1\leq d\leq sc+b$.
For $i=1,\dots,s$ the $k_i$ satisfy $1\leq k_i\leq d$, and
there exists a permutation $w\in\mathfrak{S}_s$ and nonnegative integers $t_1,\dots,t_s$
such that
\begin{equation}\label{cor-kw1}
k_{w(1)}=b+t_1,
\end{equation}
and
\begin{equation}\label{cor-kwj}
k_{w(j)}-k_{w(j-1)}=c+t_j \quad \text{for $2\leq j\leq s$.}
\end{equation}
Here the $t_j$ satisfy
\begin{equation}\label{cor-sumbound}
1\leq \sum_{j=1}^{s}t_j\leq c,
\end{equation}
$w(0):=0$, and $t_j>0$ if $w(j-1)<w(j)$ for $1\leq j\leq s$.
Then the rational function $L$ in \eqref{e-defi-L} can be written as a Laurent polynomial of the form
\begin{equation}\label{cor-L-Laurent}
x_{u_s}^{(n-s)(sc-d)}\prod_{\substack{i=1\\ i\notin U}}^n
\Big(p_i(x_i/x_{u_s})x_i^{d-sc}\Big),
\end{equation}
where the $p_i(z)$ are polynomials in $z$.
\end{prop}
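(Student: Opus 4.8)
The defining product \eqref{e-defi-L} involves the variables $x_i$ with $i\notin U$ only through the ratios $x_i/x_{u_s}$, and distinct such indices are never coupled; hence $L$ factors as $L=\prod_{i\notin U}L_i$, where, writing $z:=x_i/x_{u_s}$,
\[
L_i=\frac{(q^{1-k_s}z)_b}{(q^{k_s-d}/z)_d}\prod_{j=1}^s\big(q^{k_j-k_s+\chi(i>u_j)}z\big)_c\big(q^{k_s-k_j+\chi(u_j>i)}/z\big)_c.
\]
I will prove the one-variable claim that each $L_i$ equals $z^{d-sc}p_i(z)$ for some polynomial $p_i$. Granting this, the product $\prod_{i\notin U}(x_i/x_{u_s})^{d-sc}=x_{u_s}^{(n-s)(sc-d)}\prod_{i\notin U}x_i^{d-sc}$ reassembles the $L_i$ into the asserted shape \eqref{cor-L-Laurent}, reducing everything to the behaviour of a single $L_i$.

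The only denominator in $L_i$ is $D:=(q^{k_s-d}/z)_d$, a product of the $d$ \emph{distinct} linear factors $1-q^{k_s-d+l}/z$ ($0\le l\le d-1$), which vanish at $z=q^m$ for $m\in\{k_s-d,\dots,k_s-1\}$. The first step is to show that the numerator $N_i$ of $L_i$ vanishes at each of these $d$ points; this yields $D\mid N_i$ and hence that $L_i$ is a Laurent polynomial. In the shifted coordinate $M:=k_s-m$ the zeros of $D$ are exactly $M\in\{1,\dots,d\}$, and each numerator factor vanishes on an explicit set of $M$-values: the factor $(q^{1-k_s}z)_b$ covers $\{1,\dots,b\}$, while for each $j$ the pair $\big(q^{k_j-k_s+\chi(i>u_j)}z\big)_c\big(q^{k_s-k_j+\chi(u_j>i)}/z\big)_c$ covers, using $\chi(i>u_j)+\chi(u_j>i)=1$, an interval $I_j$ of $2c$ consecutive integers centred at $k_j$. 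So the divisibility reduces to the covering statement $\{1,\dots,d\}\subseteq\{1,\dots,b\}\cup\bigcup_{j=1}^sI_j$.

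\textbf{The covering step is the main obstacle}, and it is here that the hypotheses \eqref{cor-kw1}--\eqref{cor-sumbound} are used. Relabelling by the sorting permutation $w$, so that $k_{w(j)}=b+(j-1)c+\sum_{l\le j}t_l$ are increasing with consecutive gaps $c+t_j$, I would check that $I_{w(j-1)}$ and $I_{w(j)}$ leave no hole precisely when $t_j\le c+\chi(i>u_{w(j-1)})-\chi(i>u_{w(j)})$. Since $t_1\ge 1$, the bound $\sum_l t_l\le c$ forces $t_j\le c-1$ for $j\ge 2$, which makes this inequality automatic; a parallel check links $I_{w(1)}$ to $\{1,\dots,b\}$ (via $t_1+\chi(i>u_{w(1)})\le c+1$) and shows that the rightmost interval $I_{w(s)}$ extends past $d$ (via $\sum_l t_l\ge 1$ and $d\le sc+b$). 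The delicate point is the careful tracking of the two cases $i>u_j$ and $i<u_j$ in the endpoints of $I_j$, combined with the interplay of $t_1\ge 1$ and $\sum_l t_l\le c$.

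Once $D\mid N_i$ is established, I would conclude by a degree count. Reading off the $z$-degree ranges, $N_i$ occupies $[-sc,\,b+sc]$ and $D$ occupies $[-d,\,0]$, so the quotient $L_i$ has bottom degree exactly $(-sc)-(-d)=d-sc$ and top degree $b+sc$. Thus $L_i=z^{d-sc}p_i(z)$ with $p_i$ a genuine polynomial (of degree $b+2sc-d\ge 0$ since $d\le sc+b$) satisfying $p_i(0)\ne 0$. Reassembling over $i\notin U$ as in the first paragraph gives \eqref{cor-L-Laurent}.
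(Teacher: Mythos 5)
Your proposal is correct and follows essentially the same route as the paper: the paper likewise reduces to the per-variable cancellation of $(q^{1-k_s}x_i/x_{u_s})_d$ and proves the identical covering statement $S_0\subseteq\bigcup_{j=1}^s S_j\cup B$ for intervals of exponents anchored at the chain $k_{w(1)}<\cdots<k_{w(s)}$ with gaps $c+t_j$, verifying the same three links (left end via $t_1\le c$, consecutive gaps via $t_j\le c-1$ for $j\ge 2$, right end via $\sum t_j\ge 1$ and $d\le sc+b$). The only cosmetic difference is that the paper discards the $\chi$-shifts by passing to the $\chi$-free subintervals $S_j'$ of $2c-1$ consecutive integers (and rules out gaps $\ge 2c$ by contradiction with $k_i\le d$), whereas you keep the full $2c$-intervals and track $\chi(i>u_{w(j)})$ explicitly; both verifications are valid.
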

Note that the conditions for the $k_i$ in this proposition is just the case (3) of Lemma~\ref{cor-key}
with $t=c$.
\begin{proof}
We can further write
\begin{align}
L&=\prod_{\substack{i=1\\ i\notin U}}^n
\frac{(q^{1-k_s}x_i/x_{u_s})_{b}}{(-x_{u_s}/x_i)^dq^{dk_s-\binom{d+1}{2}}\big(q^{1-k_s}x_i/x_{u_s}\big)_d}\nonumber \\
&\quad\times\prod_{\substack{i=1\\ i\notin U}}^n\prod_{j=1}^s
(-x_{u_s}/x_i)^cq^{\big(k_s-k_j+\chi(u_j>i)\big)c+\binom{c}{2}}
\big(q^{k_j-k_s+1-c-\chi(u_j>i)}x_i/x_{u_s}\big)_{2c}\nonumber\\
&=C\times(x_{u_s})^{(n-s)(sc-d)}\prod_{\substack{i=1\\ i\notin U}}^n
x_i^{d-sc}\frac{(q^{1-k_s}x_i/x_{u_s})_{b}\prod_{j=1}^s
\big(q^{k_j-k_s+1-c-\chi(u_j>i)}x_i/x_{u_s}\big)_{2c}}
{\big(q^{1-k_s}x_i/x_{u_s}\big)_d},\label{e-expr-L}
\end{align}
where
\[
C=(-1)^{(n-s)(sc-d)}\prod_{\substack{i=1\\ i\notin U}}^n
q^{\sum_{j=1}^s\big((k_s-k_j+\chi(u_j>i))c+\binom{c}{2}\big)-dk_s+\binom{d+1}{2}}.
\]

Let
\begin{align*}
S_0:=&\{1-k_s,2-k_s,\dots,d-k_s\},
\quad B:=\{1-k_s,2-k_2,\dots,b-k_s\}, \\
S_j:=&\{k_j-k_s+1-c-\chi(u_j>i),\dots,k_j-k_s+c-\chi(u_j>i)\}, \quad \text{for $j=1,\dots,s$}.
\end{align*}
By the expression for $L$ in \eqref{e-expr-L}, to prove the proposition it suffices to show that the factor
\[
\prod_{\substack{i=1\\ i\notin U}}^n \big(q^{1-k_s}x_i/x_{u_s}\big)_d
\]
in the denominator can be cancelled by the numerator
\[
(q^{1-k_s}x_i/x_{u_s})_{b}\prod_{j=1}^s
\big(q^{k_j-k_s+1-c-\chi(u_j>i)}x_i/x_{u_s}\big)_{2c}.
\]
Then $L$ is a Laurent polynomial,
rather than a rational function.
To do this, it is equivalent to showing that
\begin{equation}\label{set}
S_0\subseteq \bigcup_{j=1}^sS_j\bigcup B.
\end{equation}
Since $S_j\supseteq S'_j:=\{k_j-k_s+1-c,\dots,k_j-k_s+c-1\}$ for $j=1,\dots,s$,
\eqref{set} holds if we show that
\begin{equation}\label{set2}
S_0\subseteq \bigcup_{j=1}^sS'_j\bigcup B.
\end{equation}
We write the ordering of the $S'_j$ as $S'_{w(1)},S'_{w(2)},\dots,S'_{w(s)}$ using the permutation $w$ in the proposition.
To obtain \eqref{set2}, it is sufficient to prove
\begin{enumerate}
\item $b-k_s\geq k_{w(1)}-k_s-c$, i.e., $k_{w(1)}\leq b+c$;
\item $k_{w(s)}-k_s+c-1\geq d-k_s$, i.e., $k_{w(s)}\geq d-c+1$;
\item $k_{w(j-1)}-k_s+c-1\geq k_{w(j)}-k_s-c$, i.e.,
$k_{w(j)}-k_{w(j-1)}\leq 2c-1$ for $j=2,\dots,s$.
\end{enumerate}

To prove (1), we find the upper bound for $k_{w(1)}$.
Since $k_{w(1)}=b+t_1$ and $t_1\leq \sum_{j=1}^st_j\leq c$, we have
$k_{w(1)}\leq b+c$. Then (1) holds.

To prove (2), we find the lower bound for $k_{w_s}$.
Summing both sides of \eqref{cor-kwj} for $j=2,\dots,s$ gives
\[
k_{w(s)}=k_{w(1)}+(s-1)c+\sum_{j=2}^st_j.
\]
By \eqref{cor-kw1}
\[
k_{w(s)}\geq (s-1)c+b+\sum_{j=1}^st_j\geq (s-1)c+b+1.
\]
Here in the last inequality we use \eqref{cor-sumbound}.
Since $d\leq sc+b$, we have
\[
k_{w(s)}\geq (s-1)c+b+1=sc+b-c+1=d-c+1.
\]
Thus (2) holds.

If (3) fails for some $i\in \{2,\dots,s\}$, then
\[
k_{w(i)}-k_{w(i-1)}\geq 2c.
\]
Together with \eqref{cor-kwj} gives
\[
k_{w(s)}-k_{w(1)}=\sum_{j=2}^s\big(k_{w(j)}-k_{w(j-1)}\big)
\geq sc+\sum_{\substack{j=2\\j\neq i}}^st_j\geq sc.
\]
Since $k_{w(1)}=b+t_1\geq b+1$, we have
\[
k_{w(s)}\geq sc+b+1=d+1.
\]
This contradicts the assumption that all the $k_i\leq d$.
Then (3) holds.
\end{proof}

\section{Proof of Theorem~\ref{thm-AFLT}}\label{sec-A}

By the discussions in Section~\ref{sec-poly}, we have completed the first two routine steps mentioned in the introduction for determining
$A_n(a,b,c,\lambda,\mu)$.
In this section, we complete the proof of Theorem~\ref{thm-AFLT} by finishing the last two steps:
In Subsections~\ref{sec-A-roots1}--\ref{sec-A3-2}, we determine all the roots for $A_n(a,b,c,\lambda,\mu)$.
In Subsection~\ref{sec-A-addition}, we characterize the expression for $A_n(a,b,c,\lambda,\mu)$ at an addition point.
We assume $c>b+\lambda_1+\mu_1$ throughout this section.

\subsection{Determination of the roots $A_1$}\label{sec-A-roots1}

In this subsection, we will determine the roots $A_1$ defined in \eqref{Roots-A} for
$A_n(a,b,c,\lambda,\mu)$, that is the content of the next lemma.

\begin{lem}\label{lem-roots-A1}
The constant term $A_n(a,b,c,\lambda,\mu)$ vanishes for $a\in A_1$.
\end{lem}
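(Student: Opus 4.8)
The plan is to realise $A_n(a,b,c,\lambda,\mu)$ as $\CT_x Q(-a)$, where $Q(d)$ is the rational function in \eqref{def-Q3}, and to set $d=-a$. An element $a\in A_1$ is of the form $a=-ic-k$ with $0\leq i\leq n-1$ and $1\leq k\leq b$, so that $d=-a=ic+k$ with these same ranges. First I would apply the Gessel--Xin operation to $Q(d)$, writing $\CT_x Q(d)$ as the finite sum \eqref{Q3-sum} over terminal pieces $\CT_x Q(d\Mid u_1,\dots,u_s;k_1,\dots,k_s)$, as defined in \eqref{eq-Qrk}. It then suffices to show that every terminal piece vanishes; since the process increases the index-set size by one at each step, all sizes satisfy $1\leq s\leq n$.

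The engine for vanishing is Proposition~\ref{cor-Q}(1): a piece of size $s$ is identically zero as soon as $d\leq (s-1)c+b$. Hence the whole argument reduces to the following arithmetic claim: \emph{every size $s$ that can occur as a terminal size satisfies $d\leq (s-1)c+b$.} The standing assumption $c>b+\lambda_1+\mu_1$, and in particular $b<c$, will be available throughout.

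To prove the claim I would classify the terminal sizes using Proposition~\ref{cor-Q}(2). The recursion from size $s$ to size $s+1$ proceeds exactly when $d>sc$ (and $s\neq n$); consequently a piece is terminal only if either $s=n$, or else $s<n$ and $d\leq sc$. In the case $s=n$ the bound is immediate: $d=ic+k\leq (n-1)c+b=(s-1)c+b$, using $i\leq n-1$ and $k\leq b$. In the case $s<n$ with $d\leq sc$, I would use $k\geq 1$: the inequality $ic+k\leq sc$ is incompatible with $s\leq i$, so $s\geq i+1$, and then $(s-1)c+b\geq ic+b\geq ic+k=d$, again using $k\leq b$. In both cases $d\leq (s-1)c+b$, so Proposition~\ref{cor-Q}(1) annihilates the piece, and therefore $\CT_x Q(d)=0$, i.e. $A_n(a,b,c,\lambda,\mu)=0$ for $a\in A_1$.

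The main obstacle is the bookkeeping in the previous paragraph: one must correctly match each terminal piece to its termination mechanism and translate that into a bound on $s$. The delicate point is that the termination bound $d\leq sc$ (available when $s<n$) is, on its own, strictly weaker than the vanishing bound $d\leq (s-1)c+b$ of Proposition~\ref{cor-Q}(1) — under $c>b$ these differ by $c-b>0$ — so the vanishing does not follow formally from termination alone. It follows only because the specific value $d=ic+k$ with $1\leq k\leq b$ sits low enough within its residue block modulo $c$: the condition $k\geq 1$ upgrades $d\leq sc$ into $s\geq i+1$, and $k\leq b$ then converts $s\geq i+1$ into $d\leq (s-1)c+b$. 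Once this arithmetic is pinned down, the conclusion is immediate.
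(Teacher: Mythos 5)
Your proposal is correct and takes essentially the same route as the paper: both realise $A_n(a,b,c,\lambda,\mu)$ as $\CT_x Q(d)$ with $d=-a=ic+k$ ($0\leq i\leq n-1$, $1\leq k\leq b$), run the Gessel--Xin expansion, and annihilate every piece via Proposition~\ref{cor-Q}, using exactly the same arithmetic dichotomy (the paper's induction on $n-s$ uses the contrapositive: $d>(s-1)c+b$ forces $i\geq s$, hence $d>sc$, so either part (1) kills the piece or part (2) continues the expansion, which is your ``$k\geq 1$ upgrades $d\leq sc$ to $s\geq i+1$'' observation read backwards). The only difference is organisational --- the paper argues by downward induction from $s=n$ rather than by first expanding fully and then classifying terminal sizes --- and the one case your classification glosses over ($s<n$, $d>sc$, $u_s=n$, where the expansion also halts) is harmless, since Proposition~\ref{cor-Q}(2) gives that piece constant term zero outright.
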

\begin{proof}
By the definitions of $A_n(a,b,c,\lambda,\mu)$ and $Q(d)$ in \eqref{Defi-A} and \eqref{def-Q3} respectively, we have
\begin{equation}\label{e-relationAQ}
A_n(-d,b,c,\lambda,\mu)=\CT_x Q(d)
\end{equation}
for $d$ an integer. We then prove the lemma by showing that
\begin{equation}\label{e-A1-1}
\CT_x Q(d)=0 \quad \text{for $-d\in A_1$}.
\end{equation}
We prove \eqref{e-A1-1} by induction on $n-s$ that
\begin{equation}\label{e-A1-2}
\CT_x Q(d\Mid u;k)=0 \quad \text{for $-d\in A_1$},
\end{equation}
where $Q(d\Mid u;k)$ is defined in \eqref{eq-Qrk}. The $s=0$ case of \eqref{e-A1-2} corresponds to \eqref{e-A1-1}.

For $-d\in A_1$ we have $1\leq d\leq (n-1)c+b$.
Then, by the property (1) of Proposition~\ref{cor-Q} with $s=n$, we have $Q(d\Mid u;k)=0$ for
$-d\in A_1$ and $s=n$. Thus, the induction basis holds.
Now suppose $0\leq s<n$.
For $d\in A_1$, if the property (1) of Proposition~\ref{cor-Q} applies,
then $Q(d\Mid u;k)=0$;
Otherwise, the property (2) of Proposition~\ref{cor-Q} applies and \eqref{cor-Q-e1} holds.
Applying $\CT\limits_x$ to both sides of \eqref{cor-Q-e1} gives
\begin{equation*}
\CT_{x}Q(d\Mid u;k)
=\begin{cases}\displaystyle
\sum_{\substack{u_s<u_{s+1}\leq n\\1\leq k_{s+1}\leq d}}
\CT_xQ(d\Mid u_1,\dots,u_s,u_{s+1};k_1,\dots,k_s,k_{s+1})  &\text{for $u_s<n$,}\\[2mm]
0  &\text{for $u_s=n$.}
\end{cases}
\end{equation*}
By the induction hypothesis, every term in the above sum is zero, and so is the sum.
Therefore, we obtain \eqref{e-A1-2}. Consequently, \eqref{e-A1-1} holds.
\end{proof}

\subsection{Determination of the roots $A_2$ for $A_n(a,b,c,\lambda,\mu)$}\label{sec-A-roots2}

In this subsection, we will determine the roots $A_2$ defined in \eqref{Roots-A} for
$A_n(a,b,c,\lambda,\mu)$. We assume that $A_2\neq \emptyset$, i.e., $\lambda\neq 0$ in
this subsection.

For $i=1,\dots,\ell(\lambda)$, denote
\[
A_{2,i}=\{-(i-1)c+\lambda_{i}-1,-(i-1)c+\lambda_{i}-2,\dots,-(i-1)c\}.
\]
Then $A_2=\cup_{i=1}^{\ell(\lambda)}A_{2,i}$. Notice that under the assumption
$c>b+\lambda_1+\mu_1$, the elements of $A_{2,1}$ are nonnegative integers and
the elements of $A_{2,i}$ are negative integers for $i>1$.
We first consider the cases when $a\in A_{2,1}$.
\begin{lem}\label{lem-roots-A21}
The constant term $A_n(a,b,c,\lambda,\mu)$ vanishes for $a\in A_{2,1}$.
\end{lem}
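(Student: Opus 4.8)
The plan is to reduce everything, via homogeneity and the addition formula, to the vanishing statement already established in Lemma~\ref{lem-positiveroots}. Note first that $A_{2,1}=\{0,1,\dots,\lambda_1-1\}$, so throughout we have a \emph{nonnegative} integer $a$ with $a<\lambda_1$; in particular $(x_0/x_i)_a$ is an honest polynomial and $A_n(a,b,c,\lambda,\mu)$ is a genuine constant term of a Laurent polynomial. I would begin by using the degree-$0$ homogeneity noted after \eqref{q-Morris} to set $x_0=1$, so that $A_n(a,b,c,\lambda,\mu)$ equals
\[
\CT_x\Big[P_\lambda(x)\,P_\mu\Big[\tfrac{q^{c-b-1}-q^a}{1-q^c}+\textstyle\sum_{i}x_i\Big]\prod_{i=1}^n(1/x_i)_a(qx_i)_b\prod_{1\le i<j\le n}(x_i/x_j)_c(qx_j/x_i)_c\Big].
\]
By the addition formula \eqref{e-Mac4}, the second Macdonald factor splits as $\sum_{\nu\subseteq\mu}P_{\mu/\nu}\big[\tfrac{q^{c-b-1}-q^a}{1-q^c}\big]\,P_\nu(x)$, where each coefficient $P_{\mu/\nu}[\cdots]$ is a scalar (the argument being a single letter). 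This writes $A_n$ as a finite $\mathbb F$-linear combination of constant terms of the shape $\CT_x\big[P_\lambda(x)P_\nu(x)\prod_i(1/x_i)_a(qx_i)_b\prod_{i<j}(x_i/x_j)_c(qx_j/x_i)_c\big]$.

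The crux is to control the product $P_\lambda(x)P_\nu(x)$. Expanding both factors in the modified-complete basis via Theorem~\ref{thm-Lassalle} (terms with $\ell(\nu)>n$ drop out by \eqref{e-Mac5}), we have $P_\lambda=\sum_{\omega\ge\lambda}c_\omega g_\omega$ and $P_\nu=\sum_\rho d_\rho g_\rho$; since $g_\omega g_\rho=g_\sigma$ with $\sigma$ the decreasing rearrangement of the parts of $\omega$ and $\rho$, every $g_\sigma$ that occurs satisfies $\sigma_1\ge\omega_1\ge\lambda_1$, the last inequality because dominance $\omega\ge\lambda$ forces $\omega_1\ge\lambda_1$. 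On the other side, expanding $\prod_i(1/x_i)_a(qx_i)_b=\sum_v e_v\,x^{-v}$, each factor $(1/x_i)_a(qx_i)_b$ has $x_i$-degrees in $\{-a,\dots,b\}$, so $v_i\in\{-b,\dots,a\}$ and hence $\max_i v_i\le a$. Recalling $g_\sigma(x;q,q^c)=h_\sigma\big[\tfrac{1-q^c}{1-q}\sum_i x_i\big]$ from \eqref{modi-complete}, we are reduced to a linear combination of terms
\[
\CT_x\Big[x^{-v}\,h_\sigma\Big[\tfrac{1-q^c}{1-q}\textstyle\sum_i x_i\Big]\prod_{1\le i<j\le n}(x_i/x_j)_c(qx_j/x_i)_c\Big].
\]
Each such term vanishes: if $|v|\ne|\sigma|$ it is $0$ by homogeneity, while if $|v|=|\sigma|$ then $\sigma_1\ge\lambda_1>a\ge\max_i v_i$, so it is $0$ by Lemma~\ref{lem-positiveroots}. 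Summing, $A_n(a,b,c,\lambda,\mu)=0$ for $a\in A_{2,1}$.

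The only genuine obstacle is the two degree inequalities feeding into $\sigma_1\ge\lambda_1>\max_i v_i$: one must verify that multiplying $P_\lambda$ by the leftover symmetric factor can never push the largest index $\sigma_1$ below $\lambda_1$ (secured by $g_\omega g_\rho=g_\sigma$ with $\sigma_1\ge\omega_1$, so that no cancellation of the ``long'' part can occur), and that the monomials $x^{-v}$ never carry a coordinate exceeding $a$ (secured by the degree range $\{-a,\dots,b\}$ of $(1/x_i)_a(qx_i)_b$, which is exactly where the hypothesis $a<\lambda_1$ enters). Once these bookkeeping bounds are in place, the argument is a direct appeal to Lemma~\ref{lem-positiveroots}, in the same spirit as the proof of Proposition~\ref{prop-Mac-vanish-2}.
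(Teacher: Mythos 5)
Your proof is correct, and it is organized somewhat differently from the paper's. The paper treats $a=0$ separately by a degree count in $x_0$ (the integrand is a Laurent polynomial in $x_0$ of degree at most $-|\lambda|<0$), and for $a\in A_{2,1}\setminus\{0\}$ it applies \eqref{e-Mac4} with the skew factor $P_{\mu/\nu}(x)$ carrying the $x$-alphabet and $P_{\nu}$ carrying $\frac{q^{c-b-1}-q^a}{1-q^c}x_0$; it then expands everything except $P_{\lambda}(x)$ into monomials, reducing to constant terms $\CT_x x^{-v}P_{\lambda}(x)\prod_{i<j}(x_i/x_j)_c(qx_j/x_i)_c$ with $|v|=|\lambda|$ and $\max\{v_i\}\le a<\lambda_1$, which vanish by Proposition~\ref{prop-Mac-vanish-2} with $\mu=0$. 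You instead put the skew factor on the scalar alphabet, so the $x$-dependent piece is a genuine Macdonald polynomial $P_{\nu}(x)$ multiplying $P_{\lambda}(x)$, and you resolve the product directly in the $g$-basis via Theorem~\ref{thm-Lassalle} together with the multiplicativity $g_{\omega}g_{\rho}=g_{\sigma}$ (so $\sigma_1\ge\omega_1\ge\lambda_1$ by dominance), finishing with Lemma~\ref{lem-positiveroots}. Since Proposition~\ref{prop-Mac-vanish-2} is itself deduced from Theorem~\ref{thm-Lassalle} and Lemma~\ref{lem-positiveroots} (its skew/Pieri content, Proposition~\ref{prop-Mac-skew}, trivializes at $\mu=0$), the two arguments rest on the same pillars; what your version buys is that it bypasses the packaged proposition, handles $a=0$ uniformly (there $\max\{v_i\}\le 0$, and indeed $|v|\le 0<|\sigma|$ so homogeneity alone kills every term), and makes the key chain $\sigma_1\ge\lambda_1>a\ge\max\{v_i\}$ fully explicit. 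One cosmetic slip: the alphabet $\frac{q^{c-b-1}-q^a}{1-q^c}$ is not a single letter but a difference of two infinite geometric-type alphabets; what matters, and what you actually use, is only that $P_{\mu/\nu}$ evaluated at it is a scalar in $\mathbb{Q}(q)$ independent of $x$, which is true.
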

\begin{proof}
If $a=0$ then
\begin{align}\label{e-A2-1}
A_n(0,b,c,\lambda,\mu)=\CT_x x_0^{-|\lambda|-|\mu|}&P_{\lambda}(x;q,q^c)
P_{\mu}\Big(\Big[\frac{q^{c-b-1}-1}{1-q^c}x_0+\sum_{i=1}^nx_i\Big];q,q^c\Big)\\
&\times
\prod_{i=1}^{n}
\Big(\frac{qx_{i}}{x_{0}}\Big)_b\prod_{1\leq i<j\leq n}
\Big(\frac{x_{i}}{x_{j}}\Big)_c\Big(\frac{x_{j}}{x_{i}}q\Big)_c.\nonumber
\end{align}
Since the degree in $x_0$ of $P_{\mu}\big(\big[\frac{q^{c-b-1}-1}{1-q^c}x_0+\sum_{i=1}^nx_i\big];q,q^c\big)$ is at most $|\mu|$ (In fact, it is at most $\mu_1$), the right-hand side of \eqref{e-A2-1}
is a Laurent polynomial in $x_0$ with degree at most $-|\lambda|<0$.
Hence, $A_n(0,b,c,\lambda,\mu)=0$.
For $a\in A_{2,1}\setminus \{0\}$, expanding
\[
\prod_{i=1}^{n}\Big(\frac{x_{0}}{x_{i}}\Big)_a\Big(\frac{qx_{i}}{x_{0}}\Big)_b
P_{\mu}\Big(\Big[\frac{q^{c-b-1}-q^a}{1-q^c}x_0+\sum_{i=1}^nx_i\Big];q,q^c\Big)
\]
as a sum of monomials and extracting the constant term with respect to $x_0$
in $A_n(a,b,c,\lambda,\mu)$ gives
\begin{equation}\label{e-A2-2}
A_n(a,b,c,\lambda,\mu)
=\sum_{v}\CT_x c_vx^{-v}P_{\lambda}(x;q,q^c)
\prod_{1\leq i<j\leq n}
\Big(\frac{x_{i}}{x_{j}}\Big)_c\Big(\frac{x_{j}}{x_{i}}q\Big)_c,
\end{equation}
where the sum is over a finite set of $v=(v_1,\dots,v_n)\in \mathbb{Z}^n$ such that $|v|=|\lambda|$ and $\max\{v\}\leq a<\lambda_1$. If $A_{2,1}\setminus \{0\}\neq \emptyset$,
then $\lambda_1>1$.
By Proposition~\ref{prop-Mac-vanish-2} with $\mu=0$, every constant term in the sum of \eqref{e-A2-2} vanishes. Then $A_n(a,b,c,\lambda,\mu)=0$ for $a\in A_{2,1}\setminus \{0\}$.
Together with the fact that $A_n(0,b,c,\lambda,\mu)=0$, we conclude that
$A_n(a,b,c,\lambda,\mu)=0$ for $a\in A_{2,1}$.
\end{proof}

Now we consider the cases when $a\in A_{2,i}$ for $i>1$.
\begin{lem}\label{lem-roots-A2i}
Let $i\in \{2,\dots,\ell(\lambda)\}$ be a fixed integer.
Then the constant term $A_n(a,b,c,\lambda,\mu)$ vanishes for $a\in A_{2,i}$.
\end{lem}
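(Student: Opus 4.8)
The plan is to follow the scheme of Lemma~\ref{lem-roots-A1}, replacing $A_1$ by $A_{2,i}$ and locating the level at which the Gessel--Xin recursion stalls. By \eqref{e-relationAQ} it suffices to prove $\CT_x Q(d)=0$ for every $d=-a$ with $a\in A_{2,i}$, that is, for
\[
d=(i-1)c-\lambda_i+j,\qquad 1\le j\le\lambda_i .
\]
Since $1\le j\le\lambda_i$ and $c>b+\lambda_1+\mu_1\ge b+\lambda_i$, these values satisfy $(i-2)c<d\le(i-1)c$ and $d>(i-2)c+b$. I would show $\CT_x Q(d\Mid u;k)=0$ for all $0\le s\le n$ by downward induction on $s$, using the Gessel--Xin operation and the two parts of Proposition~\ref{cor-Q}. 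Three regimes arise. For $s\ge i$ one has $d\le(i-1)c\le(s-1)c+b$, so part~(1) of Proposition~\ref{cor-Q} gives $Q(d\Mid u;k)=0$ outright (this covers the base case $s=n$). For $s\le i-2$ one has $sc\le(i-2)c<d$, so part~(2) applies and $\CT_x Q(d\Mid u;k)$ reduces to terms with one more substitution, which vanish by the induction hypothesis. The single value $s=i-1$ is the crux: there $d\le sc$ violates the hypothesis $d>sc$ of part~(2) while $d>(i-2)c+b=(s-1)c+b$ violates that of part~(1), so neither recursion is available.

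For the critical level $s=i-1$ I would apply Lemma~\ref{cor-key} with $t=c$ to $(k_1,\dots,k_s)$, which is legitimate since $1\le k_l\le d\le sc\le(s-1)c+b+c$. In its alternatives~(1) and~(2) the substitution $E_{u,k}$ sends one of the factors $(q^{1-k_l})_b$ or $(q^{k_{l'}-k_l})_c(q^{k_l-k_{l'}+1})_c$ of $V$ to zero, exactly as in the proof of part~(1) of Proposition~\ref{cor-Q}, so $Q(d\Mid u;k)=0$. In alternative~(3) the hypotheses of Proposition~\ref{cor-Laurent} hold with $t=c$, so $L$ is the Laurent polynomial \eqref{cor-L-Laurent}; in particular each free variable $x_l$ with $l\notin U$ enters $L$ only through a factor $x_l^{\,d-sc}\,p_l(x_l/x_{u_s})$ with $p_l$ a polynomial, so the exponent of $x_l$ contributed by $L$ is at least $d-sc=j-\lambda_i\ge 1-\lambda_i$.

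It remains to extract the Macdonald part. Writing $Y=\sum_{l\notin U}x_l$ and applying the coproduct \eqref{e-Mac4} to the collapsed factor $E_{u,k}P_\lambda=P_\lambda[Y+X_{\mathrm{cl}}]$, where $X_{\mathrm{cl}}$ is the $s$-letter cluster $\{x_{u_s}q^{k_s-k_l}\}_{l=1}^{s}$, gives
\[
E_{u,k}P_\lambda=\sum_{\nu}P_{\lambda/\nu}[Y]\,P_\nu[X_{\mathrm{cl}}],
\]
and $P_\nu[X_{\mathrm{cl}}]=0$ unless $\ell(\nu)\le s=i-1$ by \eqref{e-Mac5}, whence $\ell(\nu)<\ell(\lambda)$ and $\lambda_{\ell(\nu)+1}\ge\lambda_i$. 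Substituting this expansion, isolating the factor $P_{\lambda/\nu}[Y]\prod_{\substack{l<l'\\ l,l'\notin U}}(x_l/x_{l'})_c(qx_{l'}/x_l)_c$, and taking $\CT_{x_{u_s}}$, the quantity $\CT_x Q(d\Mid u;k)$ becomes a finite sum of constant terms of the shape $\CT_Y Y^{-v}P_{\lambda/\nu}[Y]\prod(\cdot)_c(\cdot)_c$. Since the only negative contribution to the exponent of any $x_l$ in the accompanying prefactor comes from $L$ and is bounded below by $1-\lambda_i$ (the remaining factors $E_{u,k}P_\mu$, the power $x_{u_s}^{-|\lambda|-|\mu|}$ of $x_0$, the cluster weight $P_\nu[X_{\mathrm{cl}}]=x_{u_s}^{|\nu|}(\text{scalar})$, and the passage to $\CT_{x_{u_s}}$ contribute no negative powers of $x_l$), each exponent vector satisfies $\max\{v\}\le\lambda_i-1<\lambda_{\ell(\nu)+1}$. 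Then Proposition~\ref{prop-Mac-vanish-2}, applied with its subpartition taken to be $\nu$, forces every such constant term to vanish, completing the level $s=i-1$ and hence the induction.

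The main obstacle is this last step: after the plethystic collapse and the use of Proposition~\ref{cor-Laurent}, one must verify that no factor other than $L$ lowers the exponents of the free variables $x_l$ below $1-\lambda_i$, so that the surviving $Y$-monomials always meet the hypothesis $\max\{v\}<\lambda_{\ell(\nu)+1}$ of Proposition~\ref{prop-Mac-vanish-2}. The bookkeeping of the contributions of $E_{u,k}P_\mu$ and of the cluster specialization $P_\nu[X_{\mathrm{cl}}]$, together with the verification that $\ell(\nu)<\ell(\lambda)$ makes $P_{\lambda/\nu}$ genuinely skew, is where the argument requires care.
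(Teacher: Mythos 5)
Your proposal is correct and follows essentially the same route as the paper's proof: reduction via \eqref{e-relationAQ} with the Gessel--Xin operation pinning the critical level $s=i-1$, the trichotomy of Lemma~\ref{cor-key} with the vanishing factors in its first two alternatives, Proposition~\ref{cor-Laurent} in the third, and the plethystic expansion \eqref{e-Mac4} restricting $\ell(\nu)\le i-1$ so that Proposition~\ref{prop-Mac-vanish-2} applies with the exponent bound $\max\{v\}\le (i-1)c-d\le\lambda_i-1<\lambda_{\ell(\nu)+1}$. The only cosmetic deviations are that you treat the endpoint $d=(i-1)c$ uniformly by the same vanishing proposition (the paper dispatches it separately via the degree bound of Proposition~\ref{prop-Macdeg}), and that you invoke Lemma~\ref{cor-key} with $t=c$ rather than the exact $t=d-(i-2)c-b$; both are harmless since the case~(3) conclusion still matches the hypotheses of Proposition~\ref{cor-Laurent}.
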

\begin{proof}
By \eqref{e-relationAQ}, we show that
\begin{equation}\label{e-A2-3}
\CT_x Q(d)=0
\end{equation}
for $-d\in A_{2,i}$.
If $-d\in A_{2,i}$ then $(i-1)c-\lambda_i+1\leq d\leq (i-1)c$.
Applying the Gessel--Xin operation and using \eqref{Q3-sum},
\begin{equation}\label{Q3-sum-i}
\CT_{x}Q(d)=\sum_{s\in T\subseteq \{1,\dots,n\}}\sum_{\substack{1\leq u_1<\cdots<u_s\leq n\\1\leq k_1,\dots,k_s\leq d}}
\CT_xQ(d\Mid u_1,\dots,u_s;k_1,\dots,k_s).
\end{equation}
Note that the $s$ are maximal by the Gessel--Xin operation.
We show that the $s$ in \eqref{Q3-sum-i} can only be $i-1$, otherwise the summand vanishes.
If $s\geq i$, then $d\leq (i-1)c\leq (s-1)c+b$. By (1) of Proposition~\ref{cor-Q}, the summand
in \eqref{Q3-sum-i} vanishes.
If $s\leq i-2$, then $d\geq (i-1)c-\lambda_i+1\geq (s+1)c-\lambda_i+1\geq sc+1$ by the assumption
$c>b+\lambda_1+\mu_1$. It is clear that $s\neq n$ since $s\leq i-2\leq \ell(\lambda)-2\leq n-2$.
By (2) of Proposition~\ref{cor-Q}, the summand
in \eqref{Q3-sum-i} is either zero or can be written as a sum (against that $s$ is maximal).
Hence, the $s$ in \eqref{Q3-sum-i} can only be $i-1$ and the equation reduces to
 \begin{equation}\label{Q3-sum-i-2}
\CT_{x}Q(d)=\CT_x\sum_{\substack{1\leq u_1<\cdots<u_{i-1}\leq n\\1\leq k_1,\dots,k_{i-1}\leq d}}
Q(d\Mid u_1,\dots,u_{i-1};k_1,\dots,k_{i-1}).
\end{equation}
Denote by $u^{(i-1)}:=(u_1,\dots,u_{i-1})$ and  $k^{(i-1)}:=(k_1,\dots,k_{i-1})$.
We will prove that every $\CT\limits_x Q(d\Mid u^{(i-1)};k^{(i-1)})$ vanishes for $-d\in A_{2,i}$.

Since $d\geq (i-1)c-\lambda_i+1$ and $c>b+\lambda_1+\mu_1$,
we have $d\geq (i-2)c+b+1+\mu_1+(\lambda_1-\lambda_i)\geq (i-2)c+b+1$.
Then we can write $d=(i-2)c+b+t$ for a positive integer $t$.
It follows that $1\leq k_j\leq (i-2)c+b+t$ for $j=1,\dots,i-1$. By Lemma~\ref{cor-key} with $s=i-1$, at least one of the following holds:
\begin{enumerate}
\item[(i)] $1\leq k_j\leq b$ for some $j$ with $1\leq j\leq i-1$;
\item[(ii)] $-c\leq k_r-k_l\leq c-1$ for some $(r,l)$ such that $1\leq r<l\leq i-1$;
\item[(iii)] there exists a permutation $w\in\mathfrak{S}_{i-1}$ and nonnegative integers $t_1,\dots,t_{i-1}$
such that
\begin{subequations}
\begin{equation*}
k_{w(1)}=b+t_1,
\end{equation*}
and
\begin{equation*}
k_{w(j)}-k_{w(j-1)}=c+t_j \quad \text{for $2\leq j\leq i-1$.}
\end{equation*}
\end{subequations}
Here the $t_j$ satisfy
\begin{equation*}
\sum_{j=1}^{i-1}t_j\leq t,
\end{equation*}
$w(0):=0$, and $t_j>0$ if $w(j-1)<w(j)$ for $1\leq j\leq i-1$.
\end{enumerate}
If (i) holds, then $Q(d\Mid u^{(i-1)};k^{(i-1)})$ has the factor
\[
E_{u,k}\Big[\big(qx_{u_j}/x_0\big)_b\Big]=(q^{1-k_j})_b=0.
\]
If (ii) holds, then $Q(d\Mid u^{(i-1)};k^{(i-1)})$ has the factor
\[
E_{u,k}\Big[\big(x_{u_r}/x_{u_l}\big)_c\big(qx_{u_l}/x_{u_r}\big)_c\Big],
\]
which is equal to
\[
E_{u,k}\Big[q^{\binom{c+1}{2}}(-x_{u_l}/x_{u_r})^c\big(q^{-c}x_{u_r}/x_{u_l}\big)_{2c}\Big]
=q^{\binom{c+1}{2}}(-q^{k_r-k_l})^c(q^{k_l-k_r-c})_{2c}=0.
\]
For $-d\in A_{2,i}$, we have $(i-1)c-\lambda_i+1\leq d\leq (i-1)c\leq (i-1)c+b$.
If (iii) holds, by Proposition~\ref{cor-Laurent} with $s=i-1$, we can write $Q(d\Mid u^{(i-1)};k^{(i-1)})$ as a Laurent polynomial in $x_{u_{i-1}}$. That is
\begin{align}\label{e-A2-5}
Q(d\Mid u^{(i-1)};k^{(i-1)})&=\frac{E_{u^{(i-1)},k^{(i-1)}}\bigg(P_{\lambda}(x;q,q^c)
P_{\mu}\Big(\Big[\frac{q^{c-b-1}-q^a}{1-q^c}x_0
+\sum_{j=1}^nx_j\Big];q,q^c\Big)\bigg)}
{x_{u_{i-1}}^{|\lambda|+|\mu|-(n-i+1)\big((i-1)c-d\big)}}\\
&\quad \times \prod_{\substack{j=1\\ j\notin U}}^n
\Big(p_j(x_j/x_{u_{i-1}})x_j^{d-(i-1)c}\Big)\times
\prod_{\substack{1\leq u<v\leq n\\u,v\notin U}}\big(x_u/x_v\big)_c
\big(qx_v/x_u\big)_c,\nonumber
\end{align}
where $U:=\{u_1,\dots,u_{i-1}\}$ and the $p_j(z)$ are polynomials in $z$.

By Proposition~\ref{prop-Macdeg} with $s=i-1$ we have that
the degree in $x_{u_{i-1}}$ of $E_{u^{(i-1)},k^{(i-1)}}\big(P_{\lambda}(x;q,q^c)\big)$ is at most $\lambda_1+\cdots+\lambda_{i-1}<|\lambda|$ (because $i\leq \ell(\lambda)$).
Together with the degree in $x_{u_{i-1}}$
of $E_{u^{(i-1)},k^{(i-1)}}\big(P_{\mu}\big(\big[\frac{q^{c-b-1}-q^a}{1-q^c}x_0+\sum_{i=1}^nx_i\big];q,q^c\big)\big)$
is at most $|\mu|$, for $d=(i-1)c$ we have that $Q(d\Mid u^{(i-1)};k^{(i-1)})$ is a Laurent polynomial in $x_{u_{i-1}}$ with degree at most $-\lambda_i-\cdots-\lambda_{\ell(\lambda)}<0$.
It follows that $\CT\limits_x Q\big((i-1)c\Mid u^{(i-1)};k^{(i-1)}\big)=0$.

Now we discuss the cases when $(i-1)c-\lambda_i+1\leq d<(i-1)c$ if $\lambda_i>1$. (For
$\lambda_i=1$ no integer $d$ satisfies $(i-1)c-\lambda_i+1\leq d<(i-1)c$.)
By \eqref{e-Mac4} and \eqref{e-homo-sym} we can write
\begin{multline}\label{e-A2-6}
E_{u^{(i-1)},k^{(i-1)}}\big(P_{\lambda}(x;q,q^c)\big)=P_{\lambda}(x;q,q^c)\Big|_{x_{u_j}=x_{u_{i-1}}q^{k_{i-1}-k_j},1\leq j\leq i-1}\\
=\sum_{\nu}P_{\lambda/\nu}\Big(\Big[\sum_{\substack{j=1\\j\notin U}}^nx_j\Big];q,q^c\Big)
P_{\nu}\big([m_1+\cdots+m_{i-1}];q,q^c\big)x_{u_{i-1}}^{|\nu|},
\end{multline}
where  $\nu$ is over all partitions such that $\nu\subset \lambda$,
and $m_j:=q^{k_{i-1}-k_{j}}$ for $j=1,\dots,i-1$.
We can further obtain that $\ell(\nu)\leq i-1$,
otherwise $P_{\nu}\big([m_1+\cdots+m_{i-1}];q,q^c\big)=0$ by \eqref{e-Mac5}.
By the expression for  $Q(d\Mid u^{(i-1)};k^{(i-1)})$ in \eqref{e-A2-5}, expanding $E_{u^{(i-1)},k^{(i-1)}}\big(P_{\mu}\big(\big[\frac{q^{c-b-1}-q^a}{1-q^c}x_0+\sum_{i=1}^nx_i\big];q,q^c\big)\big)$ as a sum of monomials, using \eqref{e-A2-6} and taking the constant term with respect to $x_{u_{i-1}}$, we can write $\CT\limits_xQ(d\Mid u^{(i-1)};k^{(i-1)})$ as a finite sum of the form
\begin{equation}\label{e-A2-7}
\CT_x\prod_{j\notin U}x_j^{-v_j}P_{\lambda/\nu}\Big(\Big[\sum_{\substack{j=1\\j\notin U}}^nx_j\Big];q,q^c\Big)
\prod_{\substack{1\leq u<v\leq n\\u,v\notin U}}\big(x_u/x_v\big)_c
\big(qx_j/x_i\big)_c,
\end{equation}
where the $v_j$ are integers such that $\sum_{j\notin U}v_j=|\lambda|-|\nu|$ and
$\max\{v_j\}\leq (i-1)c-d\leq \lambda_i-1<\lambda_{\ell(\nu)+1}$ for $\ell(\nu)<i$.
Then for $\lambda_{\ell(\nu)+1}>\max\{v_j\}$ and $\lambda_{\ell(\nu)+1}\geq \lambda_i>1$, we can apply Proposition~\ref{prop-Mac-vanish-2} and find that each constant term of the form
\eqref{e-A2-7} vanishes. Therefore, $\CT\limits_xQ(d\Mid u^{(i-1)};k^{(i-1)})=0$ for $(i-1)c-\lambda_i+1\leq d<(i-1)c$.
\end{proof}

\subsection{Determination of the roots $A_3$ for $A_n(a,b,c,\lambda,\mu)$ if $\ell(\mu)\leq n$}\label{sec-A3-1}

In this subsection, we will determine the roots $A_3$ defined in \eqref{Roots-A} for
$A_n(a,b,c,\lambda,\mu)$ under the assumption $\ell(\mu)\leq n$.
Note that if $\ell(\mu)\leq n$ then all the elements of $A_3$ are negative integers.

\begin{lem}\label{lem-roots-A3-1}
The constant term $A_n(a,b,c,\lambda,\mu)$ vanishes for $a\in A_3$ if $\ell(\mu)\leq n$.
\end{lem}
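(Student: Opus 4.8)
The plan is to prove the vanishing for $a\in A_3$ by following the same Gessel--Xin operation strategy used for the roots $A_2$ in Lemma~\ref{lem-roots-A2i}, but now the combinatorial obstruction lives in the $x_0$-column and propagates through the $P_\mu$ factor rather than through $P_\lambda$. Recall from \eqref{e-relationAQ} that $A_n(-d,b,c,\lambda,\mu)=\CT_x Q(d)$, so for $a=-d\in A_3$ it suffices to show $\CT_x Q(d)=0$. Fix $j\in\{1,\dots,\ell(\mu)\}$ and suppose $-d\in A_{3,j}$, i.e. $(n-j)c+b+1\le d\le (n-j)c+b+\mu_j$. First I would apply the Gessel--Xin operation \eqref{Q3-sum} to $Q(d)$ and argue, exactly as in the $A_2$ case, that the only surviving value of $s$ is forced to a single integer: using the property~(1) of Proposition~\ref{cor-Q} to kill all $s$ with $d\le (s-1)c+b$ and property~(2) (together with the maximality of $s$ under the operation) to kill all $s$ with $d>sc$, the estimates $(n-j)c+b+1\le d\le (n-j)c+b+\mu_j$ and the standing assumption $c>b+\lambda_1+\mu_1$ pin $s$ down to $s=n-j+1$. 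This is the bookkeeping step and should go through verbatim as in Lemma~\ref{lem-roots-A2i}.

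Having reduced to $s=n-j+1$, I would next invoke Lemma~\ref{cor-key} (with $t=d-(s-1)c-b=d-(n-j)c-b$, which satisfies $1\le t\le\mu_j$) to split into its three cases. Cases~(1) and~(2) of that lemma each force a vanishing factor in $Q(d\Mid u;k)$ — either $(q^{1-k_i})_b=0$ coming from the $(qx_{u_i}/x_0)_b$ numerator, or the $\binom{c+1}{2}$-type collision in the $(x_{u_r}/x_{u_l})_c(qx_{u_l}/x_{u_r})_c$ pair — identical to the displays already written out in Lemma~\ref{lem-roots-A2i}. The substantive work is case~(3). Here I would apply Proposition~\ref{cor-Laurent} (legitimate since $1\le d\le sc+b$ in this regime, as $(n-j)c+b+\mu_j\le sc+b=(n-j+1)c+b$ uses $\mu_j\le c$, which follows from $c>b+\lambda_1+\mu_1\ge\mu_1\ge\mu_j$) to rewrite $Q(d\Mid u;k)$ as a genuine Laurent polynomial, and crucially bring in Lemma~\ref{cor-subs}: under the substitution $E_{u,k}$ with the parameter values dictated by case~(3), the plethystic alphabet inside $P_\mu$,
\[
-\frac{q^{c-b-1}-q^{-d}}{1-q}x_0-\sum_{i=1}^{s}\frac{1-q^c}{1-q}x_i\Big|_{x_i=q^{k_s-k_i}},
\]
collapses to an alphabet $q^{n_1}+\dots+q^{n_{t-1}}$ of cardinality exactly $t-1<\mu_j\le\mu_j$. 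The point is that $P_\mu$ evaluated plethystically at an alphabet whose relevant cardinality is strictly less than what $\mu$ requires should vanish, via the vanishing mechanism of Proposition~\ref{prop-Mac-vanish} (applied with the appropriate part $\mu_j$ of $\mu$ and with the single-letter contributions from the remaining $x_i$, $i\notin U$, playing the role of the $n_1,\dots,n_{i-1}$ there).

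The main obstacle I anticipate is precisely this last vanishing argument: Proposition~\ref{prop-Mac-vanish} is phrased for a \emph{non-skew} $P_\lambda$ at an alphabet of the shape $\frac{1-q}{t-1}(m_1+\cdots+m_{r-1})+n_1+\cdots+n_{i-1}$, whereas after $E_{u,k}$ and the expansion \eqref{e-Mac4} the $P_\mu$ factor splits into skew pieces $P_{\mu/\sigma}$ multiplied by $P_\sigma$ evaluated at the collapsed alphabet, with the remaining variables $x_i$ ($i\notin U$) still free. So I would expect to need the same two-pronged case analysis on $\ell(\sigma)$ as in the proof of Proposition~\ref{prop-Mac-vanish}: when $\ell(\sigma)$ is too large the free-variable factor $P_\sigma$ vanishes by \eqref{e-Mac5}, and when $\ell(\sigma)$ is small the collapsed-alphabet factor vanishes because its cardinality $t-1$ is strictly below the required part-size, using \eqref{e-Mac6} after dualizing via Lemma~\ref{lem-Mac-1}. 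Matching the index $s=n-j+1$, the value $t-1$, and the part $\mu_j$ so that the cardinality deficit is exactly the one Proposition~\ref{prop-Mac-vanish} exploits — and checking that the surviving monomial exponents in the free $x_i$ really do land in the range where Proposition~\ref{prop-Mac-vanish-2} (or a direct no-constant-term argument) applies — is the delicate part, and is where I would spend most of the care.
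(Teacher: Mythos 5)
Your proposal follows the paper's proof almost step for step: the reduction via \eqref{e-relationAQ}, the Gessel--Xin operation pinning $s$ to $n-j+1$, the trichotomy of Lemma~\ref{cor-key} with $t=d-(n-j)c-b\in\{1,\dots,\mu_j\}$, the zero factors in cases (i)--(ii), and the collapse of the plethystic alphabet via Lemma~\ref{cor-subs} in case (iii) are all exactly the paper's argument. But the final step, which you flag as the anticipated obstacle and leave open, dissolves once you count the free variables: since $|U|=s=n-j+1$, exactly $n-(n-j+1)=j-1$ variables $x_i$ with $i\notin U$ survive, and this is precisely the number $i-1$ (with $i=j$) of single-letter alphabets $n_1,\dots,n_{i-1}$ that Proposition~\ref{prop-Mac-vanish} already accommodates. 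After the substitution the $P_\mu$ factor becomes
\[
P_{\mu}\Big(\Big[\tfrac{1-q}{q^c-1}\big(q^{n_1}+\dots+q^{n_{t-1}}\big)x_{u_{n-j+1}}+\sum_{i\notin U}x_i\Big];q,q^c\Big),
\]
which has the exact shape required by Proposition~\ref{prop-Mac-vanish} with $(i,\lambda_i,t)\mapsto(j,\mu_j,q^c)$: the $t-1\leq\mu_j-1$ letters $q^{n_r}x_{u_{n-j+1}}$ play the role of the $m$'s and the $j-1$ free variables play the role of the $n$'s (single letters in that proposition may be variables). So this factor of $Q(d\Mid u^{(n-j+1)};k^{(n-j+1)})$ is \emph{identically} zero before any constant term is taken.

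Consequently, the extra machinery you reach for is unnecessary and slightly off-track: Proposition~\ref{cor-Laurent} plays no role here (it is needed in the $A_2$ analysis, where the vanishing comes from a degree bound on a Laurent polynomial, not from an identically vanishing factor), and there is no need for the \eqref{e-Mac4} skew expansion, for Proposition~\ref{prop-Mac-vanish-2}, or for any control on ``surviving monomial exponents in the free $x_i$.'' The two-pronged case analysis on $\ell(\sigma)$ that you propose to redo is literally the internal proof of Proposition~\ref{prop-Mac-vanish}, so carrying it out in situ would only re-prove that proposition; invoking it directly closes the argument. In short: your skeleton is the paper's, but the proof as written is incomplete at the decisive step, and the fix is the observation that the cardinalities $t-1\leq\mu_j-1$ and $j-1$ match the hypotheses of Proposition~\ref{prop-Mac-vanish} on the nose.
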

\begin{proof}
We can write $A_3=\cup_{j=1}^{\ell(\mu)}A_{3,j}$,
where
\[
A_{3,j}=\{-(n-j)c-b-1,-(n-j)c-b-2,\dots,-(n-j)c-b-\mu_{j}\}.
\]
By \eqref{e-relationAQ} we prove this lemma by showing that
\begin{equation}\label{e-A3-1}
\CT_xQ(d)=0 \quad \text{for $-d\in A_{3,j}$ if $1\leq j\leq \ell(\mu)\leq n$}.
\end{equation}

Let $j\in \{1,2,\dots,\ell(\mu)\}$ be a fixed integer.
For $-d\in A_{3,j}$, we have $(n-j)c+b+1\leq d\leq (n-j)c+b+\mu_j$.
Applying the Gessel--Xin operation to $\CT\limits_{x}Q(d)$ gives
\begin{equation}\label{A3-sum-i}
\CT_{x}Q(d)=\sum_{s\in T\subseteq \{1,\dots,n\}}\sum_{\substack{1\leq u_1<\cdots<u_s\leq n\\1\leq k_1,\dots,k_s\leq d}}
\CT_xQ(d\Mid u_1,\dots,u_s;k_1,\dots,k_s).
\end{equation}
As the discussion in the proof of Lemma~\ref{lem-roots-A2i},
it is not hard to show that the $s$ in \eqref{A3-sum-i} can only be $n-j+1$.
Hence, \eqref{A3-sum-i} reduces to
\begin{equation}\label{A3-sum-i-2}
\CT_{x}Q(d)=\sum_{\substack{1\leq u_1<\cdots<u_{n-j+1}\leq n\\1\leq k_1,\dots,k_{n-j+1}\leq d}}\CT_xQ(d\Mid u^{(n-j+1)};k^{(n-j+1)}),
\end{equation}
where $u^{(n-j+1)}=(u_1,\dots,u_{n-j+1})$ and $k^{(n-j+1)}=(k_1,\dots,k_{n-j+1})$.
We prove that every summand $\CT\limits_xQ(d\Mid u^{(n-j+1)};k^{(n-j+1)})$ in \eqref{A3-sum-i-2} vanishes.

Since $(n-j)c+b+1\leq d\leq (n-j)c+b+\mu_j$, we can write $d=(n-j)c+b+t$ for $t\in\{1,\dots,\mu_j\}$.
It follows that $1\leq k_i\leq (n-j)c+b+t$. By Lemma~\ref{cor-key} with $s=n-j+1$, at least one of the following holds:
\begin{enumerate}
\item[(i)] $1\leq k_i\leq b$ for some $i$ with $1\leq i\leq n-j+1$;
\item[(ii)] $-c\leq k_r-k_l\leq c-1$ for some $(r,l)$ such that $1\leq r<l\leq n-j+1$;
\item[(iii)] there exists a permutation $w\in\mathfrak{S}_{n-j+1}$ and nonnegative integers $t_1,\dots,t_{n-j+1}$ such that
\begin{subequations}
\begin{equation*}
k_{w(1)}=b+t_1,
\end{equation*}
and
\begin{equation*}
k_{w(i)}-k_{w(i-1)}=c+t_i \quad \text{for $2\leq i\leq n-j+1$.}
\end{equation*}
\end{subequations}
Here the $t_i$ satisfy
\begin{equation*}
\sum_{i=1}^{n-j+1}t_i\leq t,
\end{equation*}
$w(0):=0$, and $t_i>0$ if $w(i-1)<w(i)$ for $1\leq i\leq n-j+1$.
\end{enumerate}
If (i) holds, then $Q(d\Mid u^{(n-j+1)};k^{(n-j+1)})=0$ since it has a zero factor $(q^{1-k_i})_b$. If (ii) holds, then $Q(d\Mid u^{(n-j+1)};k^{(n-j+1)})=0$ since it has a zero factor
$(q^{k_l-k_r-c})_{2c}$. We omit the details because it is the same as that in the proof of Lemma~\ref{lem-roots-A2i}.
If (iii) holds, we also show that $Q(d\Mid u^{(n-j+1)};k^{(n-j+1)})$ has a zero factor.
By Lemma~\ref{cor-subs} with $s=n-j+1$
\begin{multline*}
-\frac{q^{c-b-1}-q^{-d}}{1-q}x_0-\sum_{i=1}^{n-j+1}\frac{1-q^{c}}{1-q}x_{u_i}
\bigg|_{\substack{d=(n-j)c+b+t, \\[1pt] x_{u_i}=q^{k_{n-j+1}-k_i}x_{u_{n-j+1}},\,0\leq i\leq n-j+1}}\\
=(q^{n_1}+\dots+q^{n_{t-1}})x_{u_{n-j+1}},
\end{multline*}
where $\{n_1,\dots,n_{t-1}\}$ is a set of integers determined by $n,j,b,c$ and the $k_i$.
It follows that
\begin{multline*}
\frac{q^{c-b-1}-q^{-d}}{1-q^c}x_0+\sum_{i=1}^{n}x_i
\bigg|_{\substack{d=(n-j)c+b+t, \\[1pt] x_{u_i}=q^{k_{n-j+1}-k_i}x_{u_{n-j+1}},\,0\leq i\leq n-j+1}}\\
=\frac{1-q}{q^c-1}\big(q^{n_1}+\dots+q^{n_{t-1}}\big)x_{u_{n-j+1}}
+\sum_{i=1,i\notin U}^nx_i,
\end{multline*}
where $U:=\{u_1,u_2,\dots,u_{n-j+1}\}$. Then $Q(d\Mid u^{(n-j+1)};k^{(n-j+1)})$ has the factor
\begin{multline*}
E_{u^{(n-j+1)},k^{(n-j+1)}}\Big(P_{\mu}\big(\big[\frac{q^{c-b-1}-q^{-d}}{1-q^c}x_0+\sum_{i=1}^nx_i\big];q,q^c\big)\Big)\\
=P_{\mu}\Big(\Big[\frac{1-q}{q^c-1}\big(q^{n_1}+\dots+q^{n_{t-1}}\big)x_{u_{n-j+1}}+\sum_{i=1,i\notin U}^nx_i\Big];q,q^c\Big),
\end{multline*}
which is zero for $1\leq t\leq \mu_{j}$ by Proposition~\ref{prop-Mac-vanish} with
$(i,\lambda_i,t)\mapsto (j,\mu_j,q^c)$.
Hence, for $d\in \{(n-j)c+b+1,\dots,(n-j)c+b+\mu_{j}\}$,
we have $\CT\limits_xQ(d\Mid u^{(n-j+1)};k^{(n-j+1)})=0$.
\end{proof}

\subsection{Determination of the roots $A_3$ for $A_n(a,b,c,\lambda,\mu)$ if $\ell(\mu)>n$}\label{sec-A3-2}
In this subsection, we will determine the roots $A_3$ defined in \eqref{Roots-A} for
$A_n(a,b,c,\lambda,\mu)$ under the assumption $\ell(\mu)>n$.

Recall that
\[
A_3=\{-(n-j)c-b-1,-(n-j)c-b-2,\dots,-(n-j)c-b-\mu_{j}\Mid j=1,\dots,\ell(\mu)\}.
\]
Let
\begin{equation}\label{defi-A31}
A_{31}:=\{-(n-j)c-b-1,-(n-j)c-b-2,\dots,-(n-j)c-b-\mu_{j}\Mid j=1,\dots,n\}
\end{equation}
and
\begin{equation}\label{defi-A32}
A_{32}:=\{-(n-j)c-b-1,-(n-j)c-b-2,\dots,-(n-j)c-b-\mu_{j}\Mid j=n+1,\dots,\ell(\mu)\}
\end{equation}
if $\ell(\mu)>n$.
Then $A_3=A_{31}\cup A_{32}$.
Note that all the elements in $A_{31}$(resp. $A_{32}$) are negative (resp. positive) integers if $\ell(\mu)>n$ and $c>b+\lambda_1+\mu_1$.
We can carry out the same discussion as Lemma~\ref{lem-roots-A3-1} to determine the roots
$a\in A_{31}$ for $A_n(a,b,c,\lambda,\mu)$. We state this result in the next lemma and omit the proof.
\begin{lem}\label{lem-roots-A3-21}
If $\ell(\mu)>n$, then
the constant term $A_n(a,b,c,\lambda,\mu)$ vanishes for $a\in A_{31}$.
\end{lem}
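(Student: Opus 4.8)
The plan is to repeat, essentially verbatim, the argument establishing Lemma~\ref{lem-roots-A3-1}, now restricting attention to $a\in A_{31}$, i.e.\ to $-d\in A_{3,j}$ with $j\in\{1,\dots,n\}$. The one structural fact that legitimises this repetition is that for such $j$ we still have $j\le n<\ell(\mu)$, so $\mu_j$ remains a genuine nonzero part of $\mu$; this is exactly what is needed to invoke Proposition~\ref{prop-Mac-vanish} at the end. By contrast, for $j>n$ we would have $n-j+1\le 0$, which is precisely why the elements of $A_{32}$ must be handled separately in the next subsection.

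Concretely, via \eqref{e-relationAQ} it suffices to show $\CT_xQ(d)=0$ for $-d\in A_{3,j}$, $1\le j\le n$. First I would fix such a $j$, so that $(n-j)c+b+1\le d\le (n-j)c+b+\mu_j$, and apply the Gessel--Xin operation \eqref{Q3-sum} to $\CT_xQ(d)$. As in the proof of Lemma~\ref{lem-roots-A3-1}, Proposition~\ref{cor-Q} forces the surviving index $s$ to equal $n-j+1$: if $s\ge n-j+2$ then $d<(n-j+1)c+b\le(s-1)c+b$ (using $c>b+\lambda_1+\mu_1\ge\mu_j$), so part~(1) kills the summand; while if $s\le n-j$ then $d>sc$ and $s\ne n$, so part~(2) lets one expand the summand further, contradicting the maximality of $s$. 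The point where $j\le n$ enters is exactly that $s=n-j+1$ then lies in the admissible range $\{1,\dots,n\}$ produced by the operation.

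Next I would write $d=(n-j)c+b+t$ with $t\in\{1,\dots,\mu_j\}$ and apply Lemma~\ref{cor-key} with $s=n-j+1$ to each surviving $Q(d\Mid u^{(n-j+1)};k^{(n-j+1)})$. Cases~(i) and~(ii) of that lemma produce, respectively, the vanishing factors $(q^{1-k_i})_b$ and $(q^{k_l-k_r-c})_{2c}$, exactly as before. In case~(iii), Lemma~\ref{cor-subs} with $s=n-j+1$ shows that the plethystic argument of $P_\mu$ collapses to
\[
\frac{1-q}{q^c-1}\big(q^{n_1}+\cdots+q^{n_{t-1}}\big)x_{u_{n-j+1}}+\sum_{i=1,\,i\notin U}^n x_i,
\]
an alphabet consisting of a $\frac{1-q}{q^c-1}$-scaled part of cardinality $t-1\le\mu_j-1$ together with the $j-1$ remaining letters $x_i$ ($i\notin U$). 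Applying Proposition~\ref{prop-Mac-vanish} (with $(i,\lambda_i,t)\mapsto(j,\mu_j,q^c)$) then makes this $P_\mu$ factor vanish, so $\CT_xQ(d\Mid u^{(n-j+1)};k^{(n-j+1)})=0$, and summing gives $\CT_xQ(d)=0$.

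I do not expect a genuine obstacle here: every ingredient is identical to the $\ell(\mu)\le n$ case, and the only thing to verify is the bookkeeping noted above --- that $j\le n<\ell(\mu)$ keeps $\mu_j$ a nonzero part and keeps $s=n-j+1$ in range, and that the cardinality bound $t-1\le\mu_j-1$ matches the hypothesis of Proposition~\ref{prop-Mac-vanish}. The mild care required is simply to confirm that none of the inequalities used in the $s$-selection step (in particular $c>\mu_j$) rely on $\ell(\mu)\le n$; they depend only on $c>b+\lambda_1+\mu_1$ and on $j\le n$, both of which hold. This is exactly why the authors can state the lemma and omit the proof.
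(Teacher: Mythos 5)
Your proposal is correct and matches the paper exactly: the paper itself omits this proof, stating only that one can ``carry out the same discussion as Lemma~\ref{lem-roots-A3-1}'', which is precisely what you do, and your bookkeeping checks (that $j\le n<\ell(\mu)$ keeps $\mu_j$ a nonzero part, keeps $d>0$ and $s=n-j+1\in\{1,\dots,n\}$, and that the $s$-selection inequalities rely only on $c>b+\lambda_1+\mu_1$) are exactly the points that justify the paper's omission. The one detail worth noting is that Proposition~\ref{prop-Mac-vanish} is stated with exactly $\mu_j-1$ scaled letters while your alphabet has $t-1\le\mu_j-1$ of them, which is harmless (pad with zero letters), and the paper applies it in the same way in the proof of Lemma~\ref{lem-roots-A3-1}.
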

Then we prove $A_n(a,b,c,\lambda,\mu)=0$ for $a\in A_{32}$.
That is the content of the next lemma.
\begin{lem}\label{lem-roots-A3-2}
If $\ell(\mu)>n$, then the constant term $A_n(a,b,c,\lambda,\mu)$ vanishes for $a\in A_{32}$.
\end{lem}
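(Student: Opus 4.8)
The plan is to prove that for every $a\in A_{32}$ the Macdonald polynomial factor occurring in \eqref{Defi-A} already vanishes identically, so that $A_n(a,b,c,\lambda,\mu)$ is the constant term of the zero function. Under the running hypothesis $c>b+\lambda_1+\mu_1$ the elements of $A_{32}$ in \eqref{defi-A32} are positive integers, so the rational function $Q(d)$ together with the Gessel--Xin operation is unavailable here (these need a genuine denominator in $x_0$, i.e.\ $d>0$); the argument is instead purely plethystic, in the spirit of the treatment of the nonnegative roots in Lemma~\ref{lem-roots-A21}. So I would fix $j\in\{n+1,\dots,\ell(\mu)\}$ and $s\in\{1,\dots,\mu_j\}$, so that $a=(j-n)c-b-s$, set $t=q^c$, and use the degree-$0$ homogeneity of the integrand in $(x_0,\dots,x_n)$ to put $x_0=1$.

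The first step is to record the decomposition of the binomial element multiplying $x_0$. With $M:=j-n-1\ge0$ and $r:=s-1\ge0$, splitting $1-q^{Mc-r}=(1-q^{Mc})+q^{Mc}(1-q^{-r})$ and summing the two geometric progressions gives
\[
\beta:=\frac{q^{c-b-1}-q^{a}}{1-q^{c}}
=q^{c-b-1}\sum_{i=0}^{M-1}q^{ic}
+\frac{1-q}{t-1}\,q^{(M+1)c-b-1}\sum_{i=1}^{r}q^{-i},
\]
the empty-sum conventions covering the edge cases $j=n+1$ and $s=1$. Hence, read as a formal alphabet, $\beta$ consists of exactly $M=j-n-1$ plain letters together with $r=s-1$ special letters of the shape $\frac{1-q}{t-1}(\,\cdot\,)$. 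Consequently the argument $\beta+\sum_{i=1}^n x_i$ of $P_\mu$ is an alphabet with $s-1$ special letters and $(j-n-1)+n=j-1$ plain letters, the latter being the $j-n-1$ fixed monomials above together with the $n$ variables $x_1,\dots,x_n$.

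The core step is then a mild strengthening of Proposition~\ref{prop-Mac-vanish}. I would peel off the special letters via the addition formula \eqref{e-Mac4}, writing $P_\mu[\beta+\sum_i x_i]=\sum_{\sigma\subseteq\mu}P_{\mu/\sigma}[\tfrac{1-q}{t-1}Z]\,P_\sigma[W]$, where $Z$ is the set of $s-1$ special letters and $W$ the set of $j-1$ plain letters. Each summand vanishes: if $\ell(\sigma)\ge j$, then $P_\sigma[W]=0$ by \eqref{e-Mac5} since $W$ has only $j-1$ letters; while if $\ell(\sigma)\le j-1$, then $\sigma_j=0$, so $\mu_j-\sigma_j=\mu_j\ge s>s-1$, meaning $\mu/\sigma$ has a row longer than $s-1=|Z|$, whence by Lemma~\ref{lem-Mac-1} and \eqref{e-Mac6} (applied to $\mu',\sigma'$) the factor $P_{\mu/\sigma}[\tfrac{1-q}{t-1}Z]=(-1)^{|\mu|-|\sigma|}Q_{\mu'/\sigma'}(Z;q^c,q)$ is zero. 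Therefore $P_\mu[\beta+\sum_i x_i;q,q^c]\equiv0$, so the whole integrand in \eqref{Defi-A} is zero and $A_n(a,b,c,\lambda,\mu)=0$ for every $a\in A_{32}$; by Corollary~\ref{cor-poly-BC} these are genuine roots of the polynomial $A_n$ in $q^a$.

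The main obstacle is pinning down the decomposition of $\beta$ with the correct cardinalities $j-n-1$ and $s-1$, since it is precisely the configuration ``$j-1$ plain letters paired with a part $\mu_j$ of index $j>j-1$ exceeding the number $s-1$ of special letters'' that drives the dichotomy above, and a miscount would break it. I would also stress that the argument is uniform in $s\in\{1,\dots,\mu_j\}$ rather than only the exact case $s=\mu_j$ covered by Proposition~\ref{prop-Mac-vanish}: shrinking the special alphabet merely tightens the inequality $\mu_j>s-1$ that annihilates $P_{\mu/\sigma}$, so no separate treatment of smaller $s$ is required.
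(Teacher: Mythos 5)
Your proposal is correct and takes essentially the same route as the paper: the paper performs the identical splitting $\frac{q^{c-b-1}-q^{a}}{1-q^{c}}=q^{c-b-1}\frac{1-q^{(j-n-1)c}}{1-q^{c}}+\frac{1-q^{t-1}}{q^{c}-1}\,q^{(j-n)c-b-t}$, yielding the same $j-n-1$ plain letters and $t-1$ special letters, and then kills the factor $P_{\mu}\big(\big[\frac{q^{c-b-1}-q^{a}}{1-q^{c}}x_0+\sum_{i=1}^n x_i\big];q,q^c\big)$ by citing Proposition~\ref{prop-Mac-vanish}. Your inline dichotomy over $\sigma$ (via \eqref{e-Mac4}, \eqref{e-Mac5}, Lemma~\ref{lem-Mac-1} and \eqref{e-Mac6}) is precisely the paper's proof of that proposition, and your remark that the argument works for all $1\le s\le\mu_j$, not just $s=\mu_j$, is the same mild extension the paper uses implicitly when applying the proposition for every $t\in\{1,\dots,\mu_j\}$.
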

\begin{proof}
For $a\in A_{32}$,
we can write $a=(j-n)c-b-t$, where $j\in \{n+1,\dots,\ell(\mu)\}$ and $t\in \{1,\dots,\mu_j\}$ for each $j$.
Note that $a>0$ by the conditions $j\geq n+1$, $t\leq \mu_j$ and $c>b+\lambda_1+\mu_1$.
Then
\begin{align*}
\frac{q^{c-b-1}-q^a}{1-q^c}\Big|_{a=(j-n)c-b-t}
&=\frac{q^{c-b-1}-q^{(j-n)c-b-t}}{1-q^c}\\
&=\frac{q^{c-b-1}-q^{(j-n)c-b-1}+q^{(j-n)c-b-1}-q^{(j-n)c-b-t}}{1-q^c}\\
&=\frac{1-q^{(j-n-1)c}}{1-q^c}q^{c-b-1}+\frac{1-q^{t-1}}{q^c-1}q^{(j-n)c-b-t}.
\end{align*}
Using the above equation, we can write
\begin{multline}\label{e-A3-2-1}
P_{\mu}\Big(\Big[\frac{q^{c-b-1}-q^a}{1-q^c}x_0+\sum_{i=1}^nx_i\Big];q,q^c\Big)\Big|_{a=(j-n)c-b-t}\\
=P_{\mu}\Big(\Big[\frac{1-q^{(j-n-1)c}}{1-q^c}q^{c-b-1}x_0+\frac{1-q^{t-1}}{q^c-1}q^{(j-n)c-b-t}x_0
+\sum_{i=1}^nx_i\Big];q,q^c\Big).
\end{multline}
The right-hand side of \eqref{e-A3-2-1} is in fact of the form
\begin{equation}\label{e2-A3-2-2}
P_{\mu}\Big(\Big[\frac{1-q}{q^c-1}(m_1+\cdots+m_{t-1})
+\sum_{i=1}^nx_i+r_1+\cdots+r_{j-n-1}\Big];q,q^c\Big),
\end{equation}
where $m_i=q^{(j-n)c-b-t+i-1}x_0$ for $i=1,\dots,t-1$,
and $r_i=q^{ic-b-1}x_0$ for $i=1,\dots,j-n-1$.
Then by Proposition~\ref{prop-Mac-vanish},
for $j\in \{n+1,\dots,\ell(\mu)\}$ and $1\leq t\leq \mu_{j}$ the symmetric function \eqref{e2-A3-2-2} vanishes.
It follows that if $\ell(\mu)>n$ and $a\in A_{32}$ we can conclude that
$A_n(a,b,c,\lambda,\mu)=0$
since its factor $P_{\mu}\big(\big[\frac{q^{c-b-1}-q^a}{1-q^c}x_0+\sum_{i=1}^nx_i\big];q,q^c\big)$
vanishes by the above discussion.
\end{proof}

\subsection{The value of $A_n(a,b,c,\lambda,\mu)$ at an additional point}\label{sec-A-addition}

In this subsection, we characterize the expressions for $A_n(a,b,c,\lambda,\mu)$
when $a=-b-1$ if $\ell(\mu)<n$ and $a=(\ell(\mu)-n+1)c-b-1$ if $\ell(\mu)\geq n$ respectively.

We obtain a recursion for $A_n(-b-1,b,c,\lambda,\mu)$ when $\ell(\mu)<n$.
\begin{prop}\label{prop-add-1}
For $\ell(\mu)<n$
\begin{equation}\label{e-add-0}
A_n(-b-1,b,c,\lambda,\mu)=(-1)^bq^{-\binom{b+1}{2}-(b+1)\lambda_n}\frac{1-q^{nc}}{1-q^c}
A_{n-1}(c-b-1,b+c,c,\overline{\mu},\overline{\lambda}),
\end{equation}
where $\overline{\lambda}=(\lambda_1-\lambda_n,\dots,\lambda_{n-1}-\lambda_n)$
and $\overline{\mu}=(\mu_1+\lambda_n,\dots,u_{n-1}+\lambda_n)$.
\end{prop}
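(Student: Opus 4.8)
The plan is to integrate out the auxiliary variable $x_0$ by residues and then, after a relabelling of the remaining variables, to recognise the result as the integrand of $A_{n-1}$. First I would substitute $a=-b-1$ into \eqref{Defi-A}. Since $\frac{q^{c-b-1}-q^{-b-1}}{1-q^c}=-q^{-b-1}$, the argument of $P_\mu$ collapses to $-q^{-b-1}x_0+\sum_{i=1}^n x_i$. A short $q$-shifted factorial computation gives the pointwise identity
\[
\Big(\frac{x_0}{x_i}\Big)_{-b-1}\Big(\frac{qx_i}{x_0}\Big)_b
=(-1)^bq^{\binom{b+1}{2}}\Big(\frac{x_i}{x_0}\Big)^b\frac{1}{1-q^{-b-1}x_0/x_i},
\]
so that the entire $x_0$-dependence is carried by the simple factors $\prod_{i=1}^n(1-q^{-b-1}x_0/x_i)^{-1}$, the explicit power $x_0^{-|\lambda|-|\mu|-nb}$, and the $x_0$-polynomial (of degree $\le\mu_1$) coming from $P_\mu$.

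Next, because $P_\lambda(x)\,P_\mu[\,\cdot\,]\prod_i(x_0/x_i)_{-b-1}(qx_i/x_0)_b$ is symmetric in $x_1,\dots,x_n$ (with $x_0$ held as a parameter), I would use Proposition~\ref{prop-equiv} to replace the weight $\prod_{i<j}(x_i/x_j)_c(qx_j/x_i)_c$ by the fully symmetric $\prod_{i\ne j}(x_i/x_j)_c$, at the cost of the scalar $\frac{1}{n!}\prod_{i=1}^{n-1}\frac{1-q^{(i+1)c}}{1-q^c}$. The resulting integrand is symmetric in $x_1,\dots,x_n$, and its only $x_0$-poles are the $n$ factors above, so Lemma~\ref{lem-prop} with $k=0$ applies and $\CT_{x_0}$ is a sum of $n$ residues, the $i$-th located at $x_0=q^{b+1}x_i$. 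By the symmetry in $x_1,\dots,x_n$ the transposition $(i\,n)$ carries the $i$-th residue to the $n$-th, so all of them contribute equally after $\CT_{x_1,\dots,x_n}$, producing a factor $n$. Converting the surviving $(n-1)$-variable constant term back to the weight $\prod_{i<j}$ via Proposition~\ref{prop-equiv} in $n-1$ variables, the symmetrization scalars telescope to exactly $\frac{1-q^{nc}}{1-q^c}$.

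It then remains to evaluate the residue at $x_0=q^{b+1}x_n$. I would relabel $y_0:=x_0\,(=q^{b+1}x_n)$ and $y_i:=x_i$ for $1\le i\le n-1$, so that $x_n=q^{-b-1}y_0$ and $\CT_{x_1,\dots,x_n}=\CT_{y_0,\dots,y_{n-1}}$. At the residue the argument of $P_\mu$ becomes $-q^{-b-1}x_0+\sum_i x_i=\sum_{i=1}^{n-1}y_i$, hence $P_\mu[\,\cdot\,]=P_\mu(y_1,\dots,y_{n-1})$, which is nonzero precisely because $\ell(\mu)<n$ (by \eqref{e-Mac5}). For $P_\lambda$, iterating \eqref{e-Mac7} $\lambda_n$ times yields $P_\lambda(x_1,\dots,x_n)=(x_1\cdots x_n)^{\lambda_n}P_{\overline\lambda}[x_1+\dots+x_n]=q^{-(b+1)\lambda_n}y_0^{\lambda_n}(y_1\cdots y_{n-1})^{\lambda_n}P_{\overline\lambda}\big[q^{-b-1}y_0+\sum_{i<n}y_i\big]$, which recovers both the factor $q^{-(b+1)\lambda_n}$ and the argument of the second Macdonald polynomial in $A_{n-1}$. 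Finally, again by \eqref{e-Mac7}, the monomial $(y_1\cdots y_{n-1})^{\lambda_n}$ turns $P_\mu(y_1,\dots,y_{n-1})$ into $P_{\overline\mu}(y_1,\dots,y_{n-1})$, so the two Macdonald polynomials land in exactly the swapped slots demanded by the statement.

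The main obstacle is the bookkeeping of the remaining scalar and weight factors. I would collect the $y_0$–$y_i$ contributions of the residue, namely $(x_i/x_0)^b(1-q^{-b-1}x_0/x_i)^{-1}$ together with the cross terms $(x_i/x_n)_c(qx_n/x_i)_c$ of the weight evaluated at $x_n=q^{-b-1}y_0$, and check by a direct $q$-shifted factorial manipulation that their product equals, up to an explicit monomial in $y_i/y_0$ and a power of $q$, the target factor $(y_0/y_i)_{c-b-1}(qy_i/y_0)_{b+c}$; the leftover monomials are absorbed by the shift $x_0^{-|\lambda|-|\mu|}\mapsto y_0^{-|\overline\lambda|-|\overline\mu|}$, and the residual signs and powers of $q$ assemble into $(-1)^bq^{-\binom{b+1}{2}}$. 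One must also verify the degree hypothesis of Lemma~\ref{lem-prop}: the top $x_0$-degree of the numerator is at most $-|\lambda|-|\mu|-nb+\mu_1\le 0<n$, so the lemma does apply. Combining the symmetrization scalar $\frac{1-q^{nc}}{1-q^c}$, the monomial contribution $q^{-(b+1)\lambda_n}$, and the residue prefactor $(-1)^bq^{-\binom{b+1}{2}}$ yields \eqref{e-add-0}. This sign-and-power accounting, rather than any conceptual step, is where I expect the real work to lie.
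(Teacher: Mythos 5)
Your proposal is correct and follows essentially the same route as the paper: symmetrize via Proposition~\ref{prop-equiv}, extract $\CT_{x_0}$ by Lemma~\ref{lem-prop} so that only the residues at $x_0=q^{b+1}x_i$ survive, use symmetry to reduce to $i=n$ with a factor $n$, relabel $q^{b+1}x_n\mapsto x_0$, apply \eqref{e-Mac7} to produce $\overline\lambda,\overline\mu$ and the factor $q^{-(b+1)\lambda_n}$, and desymmetrize back so the scalars telescope to $\frac{1-q^{nc}}{1-q^c}$. The only (cosmetic) difference is that you cancel $(qx_i/x_0)_b$ against $(q^{-b-1}x_0/x_i)_{b+1}$ pointwise to leave simple poles before invoking Lemma~\ref{lem-prop}, whereas the paper applies the lemma to the full denominator and kills the terms with $1\le k_1\le b$ via the vanishing factor $(q^{1-k_1})_b$; your factor identities and degree check are all consistent with the paper's computation.
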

\begin{proof}
By Proposition~\ref{prop-equiv}, we have
\begin{equation}\label{e-add-1}
A_n(a,b,c,\lambda,\mu)=\frac{1}{n!}\prod_{i=1}^{n-1}\frac{1-q^{(i+1)c}}{1-q^c}
A'_n(a,b,c,\lambda,\mu),
\end{equation}
where
\begin{multline*}
A'_n(a,b,c,\lambda,\mu)=\CT_x x_0^{-|\lambda|-|\mu|}P_{\lambda}(x;q,q^c)
P_{\mu}\Big(\Big[\frac{q^{c-b-1}-q^a}{1-q^c}x_0+\sum_{i=1}^nx_i\Big];q,q^c\Big)\\
\times \prod_{i=1}^{n}\Big(\frac{x_{0}}{x_{i}}\Big)_a
\Big(\frac{qx_{i}}{x_{0}}\Big)_b\prod_{1\leq i\neq j\leq n}
\Big(\frac{x_{i}}{x_{j}}\Big)_c.
\end{multline*}
We obtain a recursion for $A'_n(-b-1,b,c,\lambda,\mu)$.

We can write
\[
A'_n(-b-1,b,c,\lambda,\mu)=\CT_x N/D,
\]
where
\[
N=x_0^{-|\lambda|-|\mu|}P_{\lambda}(x;q,q^c)
P_{\mu}\Big(\Big[\frac{q^{c-b-1}-q^{-b-1}}{1-q^c}x_0+\sum_{i=1}^nx_i\Big];q,q^c\Big)\\
\times \prod_{i=1}^{n}(qx_i/x_0)_b
\prod_{1\leq i\neq j\leq n}(x_i/x_j)_c
\]
and
\[
D=\prod_{i=1}^{n}(q^{-b-1}x_0/x_i)_{b+1}.
\]
We find that $N$ is a Laurent polynomial in $x_0$ with degree at most $-|\lambda|$, and $D$ is of the form
\eqref{e-formD}. Then, applying Lemma~\ref{lem-prop} to $N/D$ with respect to $x_0$ gives
\begin{multline}\label{e-add-4-1}
A'_n(-b-1,b,c,\lambda,\mu)
=\sum_{u_1=1}^n\sum_{k_1=1}^{b+1}
\CT_x P_{\lambda}(x;q,q^c)
P_{\mu}\Big(\Big[-q^{-b-1}x_{u_1}q^{k_1}+\sum_{i=1}^nx_i\Big];q,q^c\Big)\\
\times \frac{(q^{1-k_1})_b}{(q^{k_1-b-1})_{b+1-k_1}(q)_{k_1-1}(x_{u_1}q^{k_1})^{|\lambda|+|\mu|}}
\times \prod_{\substack{i=1\\i\neq u_1}}^{n}\frac{\big(q^{1-k_1}x_{i}/x_{u_1}\big)_b}
{(q^{k_1-b-1}x_{u_1}/x_i)_{b+1}}
\prod_{1\leq i\neq j\leq n}(x_i/x_j)_c.
\end{multline}
Since the factor $(q^{1-k_1})_b=0$ for $1\leq k_1\leq b$, the summand in the right-hand side of \eqref{e-add-4-1} does not vanish only when $k_1=b+1$.
Therefore, $A'_n(-b-1,b,c,\lambda,\mu)$ reduces to
\begin{multline*}
A'_n(-b-1,b,c,\lambda,\mu)
=C_1
\sum_{u_1=1}^n\CT_x (x_{u_1})^{-|\lambda|-|\mu|}P_{\lambda}(x;q,q^c)
P_{\mu}\Big(\Big[-x_{u_i}+\sum_{i=1}^nx_i\Big];q,q^c\Big)\\
\times \prod_{\substack{i=1\\i\neq u_1}}^{n}\frac{\big(q^{-b}x_{i}/x_{u_1}\big)_b}
{(x_{u_1}/x_i)_{b+1}}
\prod_{1\leq i\neq j\leq n}(x_i/x_j)_c,
\end{multline*}
where $C_1=(-1)^bq^{-\binom{b+1}{2}-(b+1)(|\lambda|+|\mu|)}$.
It is clear that the summand is symmetric for $u_i=1,\dots,n$. Thus, we can further write
$A'_n(-b-1,b,c,\lambda,\mu)$ as
\begin{align}\label{e-add-2-0}
&nC_1\times\CT_x x_{n}^{-|\lambda|-|\mu|}P_{\lambda}(x;q,q^c)
P_{\mu}\Big(\Big[\sum_{i=1}^{n-1}x_i\Big];q,q^c\Big)
\prod_{i=1}^{n-1}\frac{\big(q^{-b}x_{i}/x_n\big)_b}
{(x_n/x_i)_{b+1}}
\prod_{1\leq i\neq j\leq n}
(x_i/x_j)_c\\
&\quad
=nC_1\times\CT_x x_{n}^{-|\lambda|-|\mu|}P_{\lambda}(x;q,q^c)
P_{\mu}\Big(\Big[\sum_{i=1}^{n-1}x_i\Big];q,q^c\Big)\nonumber\\
&\qquad \times
\prod_{i=1}^{n-1}
(q^{b+1}x_n/x_i)_{c-b-1}
\big(q^{-b}x_{i}/x_n\big)_{b+c}
\prod_{1\leq i\neq j\leq n-1}
(x_i/x_j)_c.\nonumber
\end{align}
Here we use the fact that $(x_n/x_i)_{b+1}$ is a factor of $(x_n/x_i)_c$ for $c>b+\lambda_1+\mu_1\geq b$.

Taking $x_nq^{b+1}\mapsto x_0$ in the right-hand side of \eqref{e-add-2-0}, we obtain
\begin{multline*}
A'_n(-b-1,b,c,\lambda,\mu)
=(-1)^bnq^{-\binom{b+1}{2}}
\CT_x
P_{\mu}\Big(\Big[\sum_{i=1}^{n-1}x_i\Big];q,q^c\Big)
P_{\lambda}\Big(\Big[q^{-b-1}x_0+\sum_{i=1}^{n-1}x_i\Big];q,q^c\Big)
\nonumber\\
\times x_{0}^{-|\lambda|-|\mu|}
\prod_{i=1}^{n-1}(x_0/x_i)_{c-b-1}\big(qx_{i}/x_0\big)_{b+c}
\prod_{1\leq i\neq j\leq n-1}
(x_i/x_j)_c.
\end{multline*}
Note that the substitution $x_nq^{b+1}\mapsto x_0$ in the above does not change the constant term.
Using Proposition~\ref{prop-equiv} again, we have
\begin{multline}\label{e-add-3}
A'_n(-b-1,b,c,\lambda,\mu)
=(-1)^bn!q^{-\binom{b+1}{2}}
\prod_{i=1}^{n-2}\frac{1-q^c}{1-q^{(i+1)c}}
\CT_x x_{0}^{-|\lambda|-|\mu|}
P_{\mu}\Big(\Big[\sum_{i=1}^{n-1}x_i\Big];q,q^c\Big)
\\
\times
P_{\lambda}\Big(\Big[q^{-b-1}x_0+\sum_{i=1}^{n-1}x_i\Big];q,q^c\Big)
\prod_{i=1}^{n-1}(x_0/x_i)_{c-b-1}\big(qx_{i}/x_0\big)_{b+c}
\prod_{1\leq i<j\leq n-1}
\Big(\frac{x_{i}}{x_{j}}\Big)_c\Big(\frac{qx_{j}}{x_{i}}\Big)_c.
\end{multline}
From \eqref{e-Mac7},
\begin{multline*}
P_{\mu}\Big(\Big[\sum_{i=1}^{n-1}x_i\Big];q,q^c\Big)P_{\lambda}\Big(\Big[q^{-b-1}x_0+\sum_{i=1}^{n-1}x_i\Big];q,q^c\Big)\\
=(q^{-(b+1)}x_0)^{\lambda_n}P_{\overline{\mu}}\Big(\Big[\sum_{i=1}^{n-1}x_i\Big];q,q^c\Big)
P_{\overline{\lambda}}\Big(\Big[q^{-b-1}x_0+\sum_{i=1}^{n-1}x_i\Big];q,q^c\Big),
\end{multline*}
where $\overline{\lambda}=(\lambda_1-\lambda_n,\dots,\lambda_{n-1}-\lambda_n)$
and $\overline{\mu}=(\mu_1+\lambda_n,\dots,u_{n-1}+\lambda_n)$.
Substituting this into \eqref{e-add-3} gives
\begin{multline}\label{e-add-4}
A'_n(-b-1,b,c,\lambda,\mu)
=(-1)^bn!q^{-\binom{b+1}{2}-(b+1)\lambda_n}
\prod_{i=1}^{n-2}\frac{1-q^c}{1-q^{(i+1)c}}
\CT_x
\frac{P_{\overline{\mu}}\Big(\Big[\sum_{i=1}^{n-1}x_i\Big];q,q^c\Big)}
{x_{0}^{|\lambda|+|\mu|-\lambda_n}}
\\
\times
P_{\overline{\lambda}}\Big(\Big[q^{-b-1}x_0+\sum_{i=1}^{n-1}x_i\Big];q,q^c\Big)
\prod_{i=1}^{n-1}(x_0/x_i)_{c-b-1}\big(qx_{i}/x_0\big)_{b+c}
\prod_{1\leq i<j\leq n-1}
\Big(\frac{x_{i}}{x_{j}}\Big)_c\Big(\frac{qx_{j}}{x_{i}}\Big)_c.
\end{multline}
The constant term in the right-hand side of \eqref{e-add-4} is $A_{n-1}(c-b-1,b+c,c,\overline{\mu},\overline{\lambda})$.
Together with \eqref{e-add-1} (with $a\mapsto -b-1$) gives \eqref{e-add-0}.
\end{proof}

If $\ell(\mu)=n$, then $A_n(-b-1,b,c,\lambda,\mu)$ vanishes by Lemma~\ref{lem-roots-A3-1}.
It also vanishes for $\ell(\mu)>n$ by Lemma~\ref{lem-roots-A3-21}.
Hence, we can not use this value as an additional value to determine
$A_n(a,b,c,\lambda,\mu)$ if $\ell(\mu)\geq n$. For $\ell(\mu)\geq n$, we choose $a=(\ell(\mu)-n+1)c-b-1$ as an additional point to determine $A_n(a,b,c,\lambda,\mu)$.

\begin{prop}\label{prop-add-2}
For $\ell(\mu):=l\geq n$,
\begin{equation}\label{e-add-2}
A_n\big((l-n+1)c-b-1,b,c,\lambda,\mu\big)
=q^tA_n\big(\big(l-n+1\big)c-b-1,b,c,\widetilde{\lambda},\widetilde{\mu}\big),
\end{equation}
where $t=\Big(\binom{l-n+1}{2}c-(b+1)(l-n)\Big)\mu_l$, $\widetilde{\lambda}=(\lambda_1+\mu_{l},\dots,\lambda_n+\mu_{l})$
and $\widetilde{\mu}=(\mu_1-\mu_{l},\dots,\mu_{l-1}-\mu_{l})$.
\end{prop}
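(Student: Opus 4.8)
The plan is to evaluate the plethystic coefficient of $x_0$ inside $P_\mu$ at the special value $a=(l-n+1)c-b-1$, which converts the argument of $P_\mu$ into a \emph{concrete} alphabet of exactly $l=\ell(\mu)$ letters. Since $\mu$ has length $l$, the row-removal identity \eqref{e-Mac7} then lets me strip off $\mu_l$ copies of the product of all $l$ letters, simultaneously lowering $\mu$ to $\widetilde\mu$ and, after reabsorbing the $x_1\cdots x_n$ part into $P_\lambda$, raising $\lambda$ to $\widetilde\lambda$.

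First I would compute, at $a=(l-n+1)c-b-1$,
\[
\frac{q^{c-b-1}-q^a}{1-q^c}=q^{c-b-1}\,\frac{1-q^{(l-n)c}}{1-q^c}=\sum_{i=1}^{l-n}q^{ic-b-1},
\]
so that the argument of $P_\mu$ in \eqref{Defi-A} becomes
\[
\Big[\sum_{i=1}^{l-n}q^{ic-b-1}x_0+\sum_{j=1}^n x_j\Big],
\]
a sum of $l$ single-letter alphabets. Hence the plethystic $P_\mu$ equals the ordinary Macdonald polynomial $P_\mu(y_1,\dots,y_l;q,q^c)$ in the $l$ variables $y_i=q^{ic-b-1}x_0$ ($1\le i\le l-n$) and $y_{l-n+j}=x_j$ ($1\le j\le n$).

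Next, since $\ell(\mu)=l$ equals the number of variables, I would apply \eqref{e-Mac7} a total of $\mu_l$ times to $P_\mu(y_1,\dots,y_l;q,q^c)$ (each intermediate partition still has length $l$, as its smallest part stays positive until the final step), obtaining $P_\mu(y_1,\dots,y_l;q,q^c)=(y_1\cdots y_l)^{\mu_l}P_{\widetilde\mu}(y_1,\dots,y_l;q,q^c)$. The prefactor splits as $(y_1\cdots y_l)^{\mu_l}=q^{t}\,x_0^{(l-n)\mu_l}\big(\prod_{j=1}^n x_j\big)^{\mu_l}$, where
\[
t=\mu_l\sum_{i=1}^{l-n}(ic-b-1)=\Big(\binom{l-n+1}{2}c-(b+1)(l-n)\Big)\mu_l,
\]
matching the statement. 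I would then reinterpret the surviving factor as $P_{\widetilde\mu}\big[\tfrac{q^{c-b-1}-q^a}{1-q^c}x_0+\sum_j x_j\big]$ evaluated at the \emph{same} $a$, and absorb $\big(\prod_{j=1}^n x_j\big)^{\mu_l}$ into $P_\lambda$ by another $\mu_l$ applications of \eqref{e-Mac7} (valid since $\ell(\lambda)\le n$), namely $\big(\prod_{j=1}^n x_j\big)^{\mu_l}P_\lambda(x_1,\dots,x_n;q,q^c)=P_{\widetilde\lambda}(x_1,\dots,x_n;q,q^c)$ with $\widetilde\lambda=(\lambda_1+\mu_l,\dots,\lambda_n+\mu_l)$. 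This turns the function inside $\CT_x$ for $A_n((l-n+1)c-b-1,b,c,\lambda,\mu)$ into $q^t$ times the corresponding function for $A_n((l-n+1)c-b-1,b,c,\widetilde\lambda,\widetilde\mu)$, up to the power of $x_0$.

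Finally I would settle the $x_0$-bookkeeping: the explicit factor $x_0^{(l-n)\mu_l}$ shifts the exponent from $x_0^{-|\lambda|-|\mu|}$ to $x_0^{-|\lambda|-|\mu|+(l-n)\mu_l}$, and since $|\widetilde\lambda|=|\lambda|+n\mu_l$ and $|\widetilde\mu|=|\mu|-l\mu_l$ one gets $-|\lambda|-|\mu|+(l-n)\mu_l=-|\widetilde\lambda|-|\widetilde\mu|$, exactly the power required by \eqref{Defi-A} for the parameters $(\widetilde\lambda,\widetilde\mu)$. This yields \eqref{e-add-2}. The main obstacle is not any deep estimate but the careful justification that \eqref{e-Mac7} is legitimately applied at every one of the $\mu_l$ steps: one must verify that the partition has length exactly equal to the number of variables throughout (length $l$ for $P_\mu$, and length $n$ when working with $\widetilde\lambda$), and keep precise track of the accumulating $q$- and $x_0$-powers; the remainder is routine rewriting.
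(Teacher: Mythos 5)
Your proposal is correct and takes essentially the same route as the paper's proof: at $a=(l-n+1)c-b-1$ the plethystic coefficient becomes $\sum_{i=1}^{l-n}q^{ic-b-1}$, so the argument of $P_\mu$ is an alphabet of exactly $l$ letters, and \eqref{e-Mac7} is used to strip $\mu_l$ from $\mu$ (yielding the factor $q^t x_0^{(l-n)\mu_l}(x_1\cdots x_n)^{\mu_l}$) and to absorb $(x_1\cdots x_n)^{\mu_l}$ into $P_{\widetilde\lambda}$. Your explicit checks of the intermediate partition lengths and the $x_0$-exponent bookkeeping simply spell out details the paper leaves implicit.
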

\begin{proof}
If $l\geq n$,
then $(l-n+1)c-b-1$ is a nonnegative integer by the assumption $c>b+\lambda_1+\mu_1$.
Since
\[
\frac{q^{c-b-1}-q^{(l-n+1)c-b-1}}{1-q^c}
=q^{c-b-1}\frac{1-q^{(l-n)c}}{1-q^c}=\sum_{i=1}^{l-n}q^{ic-b-1},
\]
we write
\begin{multline}\label{e-add2-1}
A_n\big((l-n+1)c-b-1,b,c,\lambda,\mu\big)
=\CT_x x_0^{-|\lambda|-|\mu|}P_{\lambda}(x;q,q^c)
P_{\mu}\Big(\Big[\sum_{i=1}^{l-n}q^{ic-b-1}x_0+\sum_{i=1}^nx_i\Big];q,q^c\Big)\\
\times
\prod_{i=1}^{n}\Big(\frac{x_{0}}{x_{i}}\Big)_{(l-n+1)c-b-1}
\Big(\frac{qx_{i}}{x_{0}}\Big)_b\prod_{1\leq i<j\leq n}
\Big(\frac{x_{i}}{x_{j}}\Big)_c\Big(\frac{x_{j}}{x_{i}}q\Big)_c.
\end{multline}
By \eqref{e-Mac7}, the right-hand side of \eqref{e-add2-1} can be written as
\begin{multline}\label{e-add2-2}
q^t\CT_x x_0^{-|\lambda|-|\mu|+(l-n)\mu_{l}}P_{\widetilde{\lambda}}(x;q,q^c)
P_{\widetilde{\mu}}\Big(\Big[\sum_{i=1}^{l-n}q^{ic-b-1}x_0+\sum_{i=1}^nx_i\Big];q,q^c\Big)\\
\times
\prod_{i=1}^{n}\Big(\frac{x_{0}}{x_{i}}\Big)_{\big(l-n+1\big)c-b-1}
\Big(\frac{qx_{i}}{x_{0}}\Big)_b\prod_{1\leq i<j\leq n}
\Big(\frac{x_{i}}{x_{j}}\Big)_c\Big(\frac{x_{j}}{x_{i}}q\Big)_c,
\end{multline}
where $t$, $\widetilde{\lambda}$ and $\widetilde{\mu}$ are defined in the proposition.
The constant term in \eqref{e-add2-2} is in fact $A_n\big(\big(l-n+1\big)c-b-1,b,c,\widetilde{\lambda},\widetilde{\mu}\big)$.
\end{proof}

\subsection{A recursion for $A_n(a,b,c,\lambda,\mu)$ and the proof of Theorem~\ref{thm-AFLT}}

Since we have characterized the additional value of $A_n(a,b,c,\lambda,\mu)$ in Proposition~\ref{prop-add-1} and Proposition~\ref{prop-add-2}, and we have determined all the roots of $A_n(a,b,c,\lambda,\mu)$ in Lemmas~\ref{lem-roots-A1}--\ref{lem-roots-A3-2}, we can determine $A_n(a,b,c,\lambda,\mu)$ as a polynomial in $q^a$. In this subsection,
we obtain a recursion for $A_n(a,b,c,\lambda,\mu)$. By the recursion, we give a proof of Theorem~\ref{thm-AFLT}.

We give a recursion for $A_n(a,b,c,\lambda,\mu)$ in the next theorem.
\begin{thm}\label{thm-recur-AFLT}
For $\ell(\mu)\leq n$,
\begin{align}\label{e-rec-A-0}
&A_n(a,b,c,\lambda,\mu)\\
&=(-1)^bq^{-\binom{b+1}{2}-(b+1)(\lambda_n+\mu_n)}\frac{(1-q^{nc})}{(1-q^c)}
\prod_{i=0}^{n-1}\frac{(q^{a+ic+1})_b}{(q^{ic-b})_b}
\prod_{i=1}^{n}\frac{(q^{a+(i-1)c-\lambda_i+1})_{\lambda_i}(q^{ic-b-\lambda_i-\mu_n})_{\mu_n}}{(q^{(i-1)c-b-\lambda_i-\mu_n})_{\lambda_i+\mu_n}}
\nonumber\\
&\quad \times
\prod_{i=1}^n\frac{(q^{a+(n-i)c+b+1})_{\mu_i}}{(q^{(n-i)c})_{\mu_i-\mu_n}(q^{(n-i+1)c+\mu_i-\mu_n})_{\mu_n}}
A_{n-1}(c-b-1,b+c,c,\overline{\mu},\overline{\lambda}),\nonumber
\end{align}
where $\overline{\lambda}=(\lambda_1-\lambda_n,\dots,\lambda_{n-1}-\lambda_n)$
and $\overline{\mu}=(\mu_1+\lambda_n,\dots,u_{n-1}+\lambda_n)$.
\end{thm}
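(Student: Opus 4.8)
The plan is to determine $A_n(a,b,c,\lambda,\mu)$, regarded as a polynomial in $q^a$, by the principle that a polynomial is fixed once we know enough of its roots together with one nonzero value. By Corollary~\ref{cor-poly-BC} this polynomial has degree at most $nb+|\lambda|+|\mu|$, while Lemmas~\ref{lem-roots-A1}, \ref{lem-roots-A21}, \ref{lem-roots-A2i} and \ref{lem-roots-A3-1} exhibit exactly $nb+|\lambda|+|\mu|$ roots, namely the values of $a$ in $A_1\cup A_2\cup A_3$, which are pairwise distinct under the running hypothesis $c>b+\lambda_1+\mu_1$. Since the number of prescribed roots matches the degree bound and $A_n$ is not identically zero, these linear factors must exhaust it. Grouping them into $q$-shifted factorials, I would first prove
\begin{equation*}
A_n(a,b,c,\lambda,\mu)=C\,\prod_{i=0}^{n-1}(q^{a+ic+1})_b\,\prod_{i=1}^{n}(q^{a+(i-1)c-\lambda_i+1})_{\lambda_i}\,\prod_{i=1}^{n}(q^{a+(n-i)c+b+1})_{\mu_i},
\end{equation*}
with $C=C(b,c,\lambda,\mu)$ independent of $a$; here the three products vanish precisely on $A_1$, $A_2$ and $A_3$ respectively.

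It then remains only to compute the single constant $C$, and this is where the two additional-value results enter. When $\ell(\mu)<n$, so that $\mu_n=0$, I would specialise $a=-b-1$. This value is not a root, the three products collapse to exactly the denominators appearing in \eqref{e-rec-A-0}, and Proposition~\ref{prop-add-1} evaluates $A_n(-b-1,b,c,\lambda,\mu)$ in closed form in terms of $A_{n-1}(c-b-1,b+c,c,\overline{\mu},\overline{\lambda})$. Dividing gives $C$, and substituting back reproduces \eqref{e-rec-A-0}, its prefactor being inherited directly from Proposition~\ref{prop-add-1}.

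The case $\ell(\mu)=n$ is more delicate, because then $-b-1\in A_3$ is a root and yields no information. Here I would instead specialise at the non-root point $a=c-b-1$ and apply Proposition~\ref{prop-add-2} with $l=n$, for which the exponent $t$ vanishes, giving $A_n(c-b-1,b,c,\lambda,\mu)=A_n(c-b-1,b,c,\widetilde{\lambda},\widetilde{\mu})$ with $\ell(\widetilde{\mu})<n$. A short check shows $\overline{\widetilde{\lambda}}=\overline{\lambda}$ and $\overline{\widetilde{\mu}}=\overline{\mu}$, so the already-proved $\ell<n$ formula applied to $(\widetilde{\lambda},\widetilde{\mu})$ and specialised to $a=c-b-1$ feeds back into the same $A_{n-1}(c-b-1,b+c,c,\overline{\mu},\overline{\lambda})$; matching the two sides pins down $C$ and produces the $\mu_n>0$ form of \eqref{e-rec-A-0}.

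The main obstacle I expect is bookkeeping rather than conceptual. In both cases one must verify that the quotient of the three $q$-factorial products in generic $a$ by their values at the evaluation point simplifies to precisely the displayed factors; and in the case $\ell(\mu)=n$ one must check that peeling off $\mu_n$ via Proposition~\ref{prop-add-2} and re-expanding through the $\ell<n$ recursion reproduces exactly the $\mu_n$-dependent factors $(q^{ic-b-\lambda_i-\mu_n})_{\mu_n}$, $(q^{(i-1)c-b-\lambda_i-\mu_n})_{\lambda_i+\mu_n}$ and $(q^{(n-i+1)c+\mu_i-\mu_n})_{\mu_n}$ in \eqref{e-rec-A-0}. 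These reductions are routine but lengthy, relying on the splitting $(z)_{m+n}=(z)_m(zq^m)_n$ and telescoping of the shifted arguments.
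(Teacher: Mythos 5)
Your proposal is correct and follows essentially the same route as the paper's own proof: both determine $A_n(a,b,c,\lambda,\mu)$ as a polynomial in $q^a$ of degree at most $nb+|\lambda|+|\mu|$ from the complete set of roots $A_1\cup A_2\cup A_3$ (Lemmas~\ref{lem-roots-A1}--\ref{lem-roots-A3-1}) plus one additional value, using Proposition~\ref{prop-add-1} at $a=-b-1$ when $\ell(\mu)<n$ and, when $\ell(\mu)=n$, Proposition~\ref{prop-add-2} at $a=c-b-1$ (where $t=0$) to pass to $(\widetilde{\lambda},\widetilde{\mu})$ with $\ell(\widetilde{\mu})=n-1$, your check that $\overline{\widetilde{\lambda}}=\overline{\lambda}$ and $\overline{\widetilde{\mu}}=\overline{\mu}$ being exactly the paper's observation for its $\lambda^*,\mu^*$. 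One minor remark: your degree bound $nb+|\lambda|+|\mu|$ is the correct one from Corollary~\ref{cor-poly-BC}, whereas the paper's proof misprints it as $nc+|\lambda|+|\mu|$.
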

Note that $\ell(\overline{\lambda})\leq n-1$ and $\ell(\overline{\mu})\leq n-1$. Then we can iterate this theorem to obtain a closed form expression for $A_n(a,b,c,\lambda,\mu)$ if $\ell(\mu)\leq n$.
\begin{proof}
Since $A_n(a,b,c,\lambda,\mu)$ is a polynomial in $q^a$ with degree $nc+|\lambda|+|\mu|$ by Corollary~\ref{cor-poly-BC}, we can explicitly determine it if we find all its roots and one additional value.
By Proposition~\ref{prop-add-1} and Lemmas~\ref{lem-roots-A1}--\ref{lem-roots-A3-2}, it is straightforward to
obtain the following recursion for $A_n(a,b,c,\lambda,\mu)$ if $\ell(\mu)<n$:
\begin{align}\label{e-rec-A-1}
A_n(a,b,c,\lambda,\mu)&=\prod_{i=0}^{n-1}\frac{(q^{a+ic+1})_b}{(q^{ic-b})_b}
\prod_{i=1}^{\ell(\lambda)}\frac{(q^{a+(i-1)c-\lambda_i+1})_{\lambda_i}}{(q^{(i-1)c-\lambda_i-b})_{\lambda_i}}
\prod_{i=1}^{\ell(\mu)}\frac{(q^{a+(n-i)c+b+1})_{\mu_i}}{(q^{(n-i)c})_{\mu_i}}\\
&\quad \times(-1)^bq^{-\binom{b+1}{2}-(b+1)\lambda_n}\frac{1-q^{nc}}{1-q^c}
A_{n-1}(c-b-1,b+c,c,\overline{\mu},\overline{\lambda}),\nonumber
\end{align}
where $\overline{\lambda}$ and $\overline{\mu}$ are defined in the theorem.

If $\ell(\mu)=n$ then by Proposition~\ref{prop-add-2} we have
\[
A_n(c-b-1,b,c,\lambda,\mu)=A_n(c-b-1,b,c,\lambda^*,\mu^*),
\]
where $\lambda^*=(\lambda_1+\mu_n,\dots,\lambda_n+\mu_n)$
and $\mu^*=(\mu_1-\mu_n,\dots,\mu_{n-1}-\mu_n)$. Then $\ell(\mu^*)=n-1$. Applying \eqref{e-rec-A-1} to $A_n(c-b-1,b,c,\lambda^*,\mu^*)$ yields
\begin{align*}
A_n(c-b-1,b,c,\lambda,\mu)&=A_n(c-b-1,b,c,\lambda^*,\mu^*)\\
&=\prod_{i=0}^{n-1}\frac{(q^{(i+1)c-b})_b}{(q^{ic-b})_b}
\prod_{i=1}^{n}\frac{(q^{ic-b-\lambda_i-\mu_n})_{\lambda_i+\mu_n}}{(q^{(i-1)c-b-\lambda_i-\mu_n})_{\lambda_i+\mu_n}}
\prod_{i=1}^{n-1}\frac{(q^{(n-i+1)c})_{\mu_i-\mu_n}}{(q^{(n-i)c})_{\mu_i-\mu_n}}
\\
&\times(-1)^bq^{-\binom{b+1}{2}-(b+1)(\lambda_n+\mu_n)}\frac{1-q^{nc}}{1-q^c}A_{n-1}(c-b-1,b+c,c,\overline{\mu},\overline{\lambda}).\nonumber
\end{align*}
Together with the fact that $A_n(a,b,c,\lambda,\mu)$ is a polynomial in $q^a$ with degree $nc+|\lambda|+|\mu|$ and we have determined all its roots by Lemmas~\ref{lem-roots-A1}--\ref{lem-roots-A3-2}, we obtain
\begin{align}\label{e-rec-A-2}
A_n(a,b,c,\lambda,\mu)
&=\prod_{i=0}^{n-1}\frac{(q^{a+ic+1})_b}{(q^{(i+1)c-b})_b}
\prod_{i=1}^{n}\frac{(q^{a+(i-1)c-\lambda_i+1})_{\lambda_i}}{(q^{ic-\lambda_i-b})_{\lambda_i}}
\prod_{i=1}^{n}\frac{(q^{a+(n-i)c+b+1})_{\mu_i}}{(q^{(n-i+1)c})_{\mu_i}}\\
&\quad\times\prod_{i=0}^{n-1}\frac{(q^{(i+1)c-b})_b}{(q^{ic-b})_b}
\prod_{i=1}^{n}\frac{(q^{ic-b-\lambda_i-\mu_n})_{\lambda_i+\mu_n}}{(q^{(i-1)c-b-\lambda_i-\mu_n})_{\lambda_i+\mu_n}}
\prod_{i=1}^{n-1}\frac{(q^{(n-i+1)c})_{\mu_i-\mu_n}}{(q^{(n-i)c})_{\mu_i-\mu_n}}\nonumber\\
&\quad \times(-1)^bq^{-\binom{b+1}{2}-(b+1)(\lambda_n+\mu_n)}\frac{1-q^{nc}}{1-q^c}
A_{n-1}(c-b-1,b+c,c,\overline{\mu},\overline{\lambda})\nonumber
\end{align}
for $\ell(\mu)=n$. Substituting
\begin{equation*}
\frac{(q^{ic-b-\lambda_i-\mu_n})_{\lambda_i+\mu_n}}{(q^{ic-b-\lambda_i})_{\lambda_i}}
=(q^{ic-b-\lambda_i-\mu_n})_{\mu_n}
\end{equation*}
and
\[
\frac{(q^{(n-i+1)c})_{\mu_i-\mu_n}}{(q^{(n-i+1)c})_{\mu_i}}
=\frac{1}{(q^{(n-i+1)c+\mu_i-\mu_n})_{\mu_n}}
\]
into \eqref{e-rec-A-2}, we obtain \eqref{e-rec-A-0}.
\end{proof}

Now we can complete the proof of Theorem~\ref{thm-AFLT}.
\begin{proof}[Proof of Theorem~\ref{thm-AFLT}]
If $\ell(\mu)\leq n$, then $A_n(a,b,c,\lambda,\mu)$ is determined by its recursion \eqref{e-rec-A-0}
and the initial value $A_0(a,b,c,0,0)=1$. Hence, to prove the theorem for $\ell(\mu)\leq n$,
it is straightforward to verify that the right-hand side of \eqref{AFLT} satisfies the same recursion and initial condition. To carry out this verification, one may need \eqref{e-special}.

For $\ell(\mu)=l>n$, we can obtain a formula for $A_n(a,b,c,\lambda,\mu)$ by all its roots (Lemmas~\ref{lem-roots-A1}--\ref{lem-roots-A3-2}) and the value $A_n\big((l-n+1)c-b-1,b,c,\lambda,\mu\big)$.
This additional value $A_n\big((l-n+1)c-b-1,b,c,\lambda,\mu\big)$ is determined by \eqref{e-add-2}
and the recursion for $A_n(a,b,c,\lambda,\mu)$ for $\ell(\mu)\leq n$.

For example, we can get a recursion for $A_n\big(2c-b-1,b,c,\lambda,\mu\big)$ if $\ell(\mu)=n+1$.
By \eqref{e-add-2} we have
\[
A_n\big(2c-b-1,b,c,\lambda,\mu\big)=q^{(c-b-1)\mu_{n+1}}A_n(2c-b-1,b,c,\widetilde{\lambda},\widetilde{\mu}),
\]
where $\widetilde{\lambda}=(\lambda_1+\mu_{n+1},\dots,\lambda_n+\mu_{n+1})$
and $\widetilde{\mu}=(\mu_1-\mu_{n+1},\dots,\mu_{n}-\mu_{n+1})$. Then $\ell(\widetilde{\mu})=n$ and we can apply
\eqref{e-rec-A-0} to $A_n(2c-b-1,b,c,\widetilde{\lambda},\widetilde{\mu})$ to obtain a recursion for
$A_n\big(2c-b-1,b,c,\lambda,\mu\big)$. This leads to a recursion for $A_n\big(a,b,c,\lambda,\mu\big)$ for $\ell(\mu)=n+1$.

By the above argument, to prove the theorem for the $\ell(\mu)>n$ case, it suffices to check that
the right-hand side of \eqref{AFLT} satisfies the same recursion as \eqref{e-rec-A-0} for $\ell(\mu)=n$ and
the same inductive relation as \eqref{e-add-2} for $\ell(\mu)>n$ and $a=\big(\ell(\mu)-n+1\big)c-b-1$.
These verifications are straightforward.
\end{proof}

\subsection*{Acknowledgements}

This work was supported by the
National Natural Science Foundation of China (No. 12171487).


\begin{thebibliography}{10}

\bibitem{AFLT11}
V. A. Alba, V. A. Fateev, A. V. Litvinov and G. M. Tarnopolskiy, \emph{On combinatorial expansion of the conformal blocks arising from AGT conjecture}, Lett. Math. Phys. 98 (2011), 33--64.

\bibitem{ARW}
S. P. Albion, E. M. Rains and S. O. Warnaar,
\emph{AFLT-type Selberg integrals},
Comm. Math. Phys. 388 (2021), 735--791.

\bibitem{Askey}
R. Askey, \emph{Some basic hypergeometric extensions of integrals of Selberg and Andrews}, SIAM J. Math. Anal. 11 (1980), 938--951.

\bibitem{BF}
T. H. Baker and P. J. Forrester, \emph{Generalizations of the $q$-Morris constant term identity}, J. Combin. Theory, Ser. A 81 (1998), 69--87.

\bibitem{cai}
T. W. Cai, \emph{Macdonald symmetric functions of rectangular shapes}, J. Combin. Theory Ser. A 128 (2014), 162--179.

\bibitem{FW}
P. J. Forrester and S. O. Warnaar, \emph{The importance of the Selberg integral}, Bull. Amer. Math. Soc. (N.S.) 45 (2008), 489--534.

\bibitem{GR}
G. Gasper and M. Rahman, \emph{Basic Hypergeometric Series}, Encyclopedia of Mathematics and its Applications, Vol. 35, second edition, Cambridge University Press, Cambridge, 2004.

\bibitem{Gessel-Lv-Xin-Zhou2008}
I. M. Gessel, L. Lv, G. Xin and Y. Zhou, \emph{A unified elementary approach to the Dyson, Morris, Aomoto,
and Forrester constant terms}, J. Combin. Theory Ser. A 115 (2008), 1417--1435.

\bibitem{GX}
I. M. Gessel and G. Xin, \emph{A short proof of the
Zeilberger--Bressoud $q$-Dyson theorem}, Proc. Amer. Math. Soc.
134 (2006), 2179--2187.

\bibitem{Habsieger1988}
L. Habsieger, \emph{Une q-int\'{e}grale de Selberg-Askey}, SIAM J.
Math. Anal. 19 (1988), 1475--1489.

\bibitem{haglund}
J. Haglund, \emph{The $q,t$-Catalan Numbers and the Space of Diagonal Harmonics:
With an appendix on the combinatorics of Macdonald polynomials},
Univ. Lecture Ser., vol. 41, Amer. Math. Soc., Providence, RI, (2008).

\bibitem{Hua}
L. K. Hua, \emph{Harmonic analysis of functions of several complex variables in the classical
domains}, Translations of Mathematical Monographs, vol. 6, American Mathematical
Society, Providence, RI, 1963.

\bibitem{Kadell1988}
K. W. J. Kadell, \emph{A proof of Askey's conjectured $q$-analogue of Selberg's integral and a conjecture of Morris}, SIAM J. Math. Anal. 19 (1988), 969--986.

\bibitem{Kadell93}
K. W. J. Kadell, \emph{An integral for the product of two Selberg-Jack symmetric functions},
compo. Math. 87 (1993), 5--43.

\bibitem{Kadell97}
K. W. J. Kadell, \emph{The Selberg--Jack symmetric functions}, Adv. Math. 130 (1997), 33--102.

\bibitem{kadell}
K. W. J. Kadell, \emph{A Dyson constant term orthogonality relation}, J. Combin. Theory Ser. A 89 (2000), 291--297.

\bibitem{KNPV}
G. K\'{a}rolyi, Z. L. Nagy, F. V. Petrov, and V. Volkov, \emph{A new approach to constant term identities and Selberg-type integrals}, Adv. Math. 277 (2015), 252--282.

\bibitem{Lascoux}
A. Lascoux, \emph{Symmetric Functions and Combinatorial Operators on Polynomials},
CBMS Reg. Conf. Ser. Math., Vol. 99, Amer. Math. Soc., Providence, RI, 2003.

\bibitem{Lassalle}
M. Lassalle, \emph{A short proof of generalized Jacobi-Trudi expansions for Macdonald polynomials}, Contemp. Math. 417 (2006), 271--280.

\bibitem{LXZ}
L. Lv, G. Xin and Y. Zhou, \emph{A family of $q$-Dyson style
constant term identities}, J. Combin. Theory Ser. A 116
(2009), 12--29.


\bibitem{MacSMC}
I. G. Macdonald, \emph{A new class of symmetric functions}, Actes du 20e S\'eminaire Lotharingien, vol. 372/S-20, Publications I.R.M.A., Strasbourg, 1988, pp. 131--171.

\bibitem{Mac95}
I. G. Macdonald, \emph{Symmetric Functions and Hall Polynomials},
2nd edition, The Clarendon Press, Oxford University Press, 1995.

\bibitem{Mellit}
A. Mellit, \emph{Plethystic identities and mixed Hodge structures of character varieties},
 arXiv:1603.00193.

\bibitem{Morris1982}
W. G. Morris, \emph{Constant Term Identities for Finite and Affine Root System: Conjectures and Theorems}, Ph.D. thesis, Univ. Wisconsin--Madison, 1982.

\bibitem{RW}
E. M. Rains and S. O. Warnaar,
\emph{Bounded Littlewood Identities},
Mem. Amer. Math. Soc., 270 (2021), No 1317, vii+115pp.

\bibitem{Selberg}
A. Selberg, \emph{Bemerkninger om et multipelt integral}, Norsk Mat. Tidsskr. 26 (1944), 71--78.

\bibitem{stembridge1987}
J.~R. Stembridge, \emph{First layer formulas for characters of
$SL(n,\mathbb{C})$}, Trans. Amer. Math. Soc. 299 (1987),
319--350.

\bibitem{Warnaar05}
 S. O. Warnaar, \emph{$q$-Selberg integrals and Macdonald polynomials}, Ramanujan J. 10 (2005), 237--268.

\bibitem{xinresidue}
G.~Xin, \emph{The Ring of {M}alcev--{N}eumann Series and the Residue Theorem},
Ph.D Thesis, Brandeis University, 2004.

\bibitem{xiniterate}
G.~Xin, \emph{A fast algorithm for {M}ac{M}ahon's partition
analysis},
  Electron. J. Combin. 11 (2004), R58, 20 pp.

\bibitem{XZ}
G. Xin and Y. Zhou, \emph{A Laurent series proof of the Habsieger--Kadell $q$-Morris identity},
Electron. J. Combin. 21 (2014), \#P3.38.

\bibitem{Zhou}
Y. Zhou, \emph{On the $q$-Dyson orthogonality problem}, Adv. Appl. Math. 130 (2021) 102224.

\end{thebibliography}
\end{document}